%% LyX 2.1.4 created this file.  For more info, see http://www.lyx.org/.
%% Do not edit unless you really know what you are doing.
\documentclass[12pt,english]{amsart}
\usepackage[T1]{fontenc}
\usepackage[latin9]{inputenc}
\usepackage{geometry}
\geometry{verbose,tmargin=3cm,bmargin=3cm,lmargin=3cm,rmargin=3cm}
\usepackage{babel}
\usepackage{amstext}
\usepackage{amsthm}
\usepackage{amssymb}
\usepackage{esint}
\usepackage{color}
\usepackage[unicode=true,pdfusetitle,
 bookmarks=true,bookmarksnumbered=false,bookmarksopen=false,
 breaklinks=false,pdfborder={0 0 1},backref=false,colorlinks=false]
 {hyperref}
\usepackage{breakurl}

\makeatletter
%%%%%%%%%%%%%%%%%%%%%%%%%%%%%% Textclass specific LaTeX commands.
\numberwithin{equation}{section}
\numberwithin{figure}{section}
\theoremstyle{plain}
\newtheorem{thm}{\protect\theoremname}
  \theoremstyle{definition}
  \newtheorem{defn}[thm]{\protect\definitionname}
  \theoremstyle{remark}
  \newtheorem{rem}[thm]{\protect\remarkname}
  \theoremstyle{plain}
  \newtheorem{prop}[thm]{\protect\propositionname}
  \theoremstyle{plain}
  \newtheorem{lem}[thm]{\protect\lemmaname}
  \theoremstyle{definition}
  \newtheorem{example}[thm]{\protect\examplename}

%%%%%%%%%%%%%%%%%%%%%%%%%%%%%% User specified LaTeX commands.
\usepackage{babel}

  \providecommand{\definitionname}{Definition}
  \providecommand{\examplename}{Example}
  \providecommand{\lemmaname}{Lemma}
  \providecommand{\propositionname}{Proposition}
  \providecommand{\remarkname}{Remark}
\providecommand{\theoremname}{Theorem}

\makeatother

  \providecommand{\definitionname}{Definition}
  \providecommand{\examplename}{Example}
  \providecommand{\lemmaname}{Lemma}
  \providecommand{\propositionname}{Proposition}
  \providecommand{\remarkname}{Remark}
\providecommand{\theoremname}{Theorem}

\begin{document}

\title{Spatial and Temporal white noises \\ under sublinear G-expectation }

\author{JI Xiaojun  and PENG Shige}

%\version{Nov. 06, 2018}

\address{School of Mathematics Shandong University, 250100, Jinan, this research is supported by NSF of China No. L1624032  }
\email{Ji Xiaojun:  jixiaojun@mail.sdu.edu.cn,\,\,\,\, Peng Shige:  peng@sdu.edu.cn }
\begin{abstract}
In the framework of sublinear expectation,
we have introduced  a new type of $G$-Gaussian random fields, which 
contain a type of spatial white noise as a special case. 
Based on this result, we also have introduced a spatial-temporal 
$G$-white noise. 
Different from  the case of linear expectation, in which 
the probability measure need to be known,  under the uncertainty 
of the probability measure, the spatial white noises are intrinsically  
different from the temporal one.  

%we develop a general Gaussian random field under a sublinear expectation
%framework. We first construct a $G$-Gaussian random field whose finite
%dimensional distributions are $G$-normally distributed. A remarkable
%point is that, different from the classical probability theory, a
%$G$-Brownian motion is not a $G$-Gaussian random field. A $G$-Brownian motion is suitable 
%to be as a temporal random field, whereas a random
%Gaussian field is rather suitable for describe spatial uncertainty.
%This new framework of $G$-Gaussian random field can be used to quantitatively
%and efficiently study such type of spatial uncertainties and risks.
%
%By the method of standard partition, we define a special family of
%generating functions $\{G_{\underline{\gamma}}\}_{\underline{\gamma}\in\mathcal{J}_{\Gamma}}$
%as (\ref{g-q}) and then establish the corresponding white noise of
%space type. Stochastic integral of functions in $L^{2}(\mathbb{R}^{d})$
%with respect to the spatial white noise has been established and the
%family of stochastic integrals is still a $G$-Gaussian random field.
%
%Furthermore, we focus on the spatial-temporal G-white noise on the
%sublinear expectation space. Applying the similar method of $G$-It\^o's
%integral, we develop the stochastic calculus with respect to the spatial-temporal
%white noise on $\mathbf{M}_{G}^{2}([0,T]\times\mathbb{R}^{d})$.
%
%$G$-Gaussian random field and $G$-white noise theory can be widely
%used to study the uncertain models in many physical problems, such
%as Ising models.
\medskip
\medskip
\medskip

\noindent{\textbf{Key words}. } sublinear expectation, $G$-Brownian motion, $G$-Gaussian random field, $G$-white noise, Spatial and temporal white noise.

\end{abstract}

\maketitle

\global\long\def\bbR{\mathbb{R}}

\global\long\def\ccR{\mathcal{R}}

\global\long\def\dlim{\operatorname{\underrightarrow{{\rm lim}}}}

\global\long\def\Ker{\operatorname{\rm Ker}}

\global\long\def\End{\operatorname{\rm End}}

\global\long\def\myint#1#2#3{\int_{#1}^{#2}\sin#3dx}

\section{Introduction}

The axiomatic formulation of probability space $(\Omega,\mathcal{F},P)$
is a powerful and elegant mathematical framework for quantitative
studies of uncertainties. A typical example is to introduce a Wiener
probability measure $P$ on the space of $d$-dimensional continuous
paths $\Omega=C([0,\infty);\mathbb{R}^{d})$, with $\mathcal{F}=\mathcal{B}(\Omega)$.
The canonical process $B_{t}(\omega)=\omega_{t}$, $t\geq0$, is then
a standard Brownian motion, namely, under the Wiener probability measure
$P$, the process $B$ becomes incrementally stable, independent and
continuous. It turns out that any random vector of the form $(B_{t_{1}},\cdots,B_{t_{n}})$
is normally distributed and thus $B$ is a Gaussian process, and all
increments $B_{t+s}-B_{t}$ are independent of $(B_{t_{1}},\cdots,B_{t_{n}}),$
for $t_{1},\cdots,t_{n}\leq t,$ and identically distributed with
respect to $B_{s}$.

But in most cases of our real world, the probability measure
itself is essentially unknown. This type of higher level uncertainty can be described
by a family of probability measures $\{P_{\theta}\}_{\theta\in\Theta}$
such that we are unable to  know ``true one''. In this case
the notion of sublinear expectation 
\[
\mathbb{E}[X]=\sup_{\theta\in\Theta}\int_{\Omega}XdP_{\theta}.
\]
is often used to obtain a robust expectation. This type of formulation induces
a new type of sublinear expectation $\mathbb{E}$ which has been
widely investigated, see, e.g.,  Huber \cite{Huber}(1981), Walley \cite{Walley}(1991).
See also the corresponding coherent risk measures representation of
Arzner et al \cite{ADEH}(1999), Delbaen \cite{Delbaen2002}(2002)
and F\"ollmer \& Schied \cite{Fo-Sch}(2002).

A very intereting question is that can we follow the idea of Kolmogorov-Wiener,
to use the above mentioned idea to construct a basic nonlinear expectation
$\mathbb{E}[\cdot]$, under which the same canonical process $B$
becomes a ``nonlinear'' Brownian motion? Peng \cite{P2007}(2007) introduced
a notion of $G$-Brownian motion which is still the same canonical
process. But the Wiener probability measure $P$ was replaced by the
corresponding sublinear expectation, called G-expectation, such that
$B$ becomes a continuous stochastic process whose increments are
stable and independent. It turns out to be a typical model of stochastic
processes for which the uncertainty of probabilities is dynamically
and intrinsically taken into account. We can also use this $G$-Brownian
motion to generate a very rich class of stochastic processes such
as $G$-OU process, geometric $G$-Brownian motions, through the corresponding
stochastic differential equations driven by this $G$-Brownian motion,
thanks to a generalized It\^o's calculus. 

But it is worth to point out that, for each $t>0$, $B_{t}$ is not
symmetric with respect to $B_{2t}-B_{t}$ in the level of distributions.
We know that $B_{2t}-B_{t}$ is independent from $B_{t}$, but in
general $B_{t}$ is not independent from $B_{2t}-B_{t}$. In fact,
we can check that, although $B_{2t}-B_{t}$ and $B_{t}$ are both
$G$-normally distributed, but the $2$-dimensional random vector
$(B_{2t}-B_{t},B_{t})$ is not $G$-normal. In general $Y$ is independent
from $X$ does not imply that $X$ is independent from $Y$. In fact
the notion of independence becomes a typical temporal relation. For
example, a random sequence $\{X_{i}\}_{i=1}^{\infty}$ is a i.i.d.
sequence under a nonlinear expectation is mainly for the case where
$i\in\mathbb{N}$ is a time index, namely $\{X_{i}\}_{i=1}^{\infty}$
is a time series. How to introduce a space-indexed random fields becomes
an interesting and challenging problem.

In this paper, we introduce a new type of spatial-temporal white noise
in the presence of probability model uncertainties which is a natural
generalization of the classical spatial white noise (see, among others,
Walsh \cite{Walsh}, Dalang \cite{Dalang}, Da Prato and J. Zabczyk
\cite{DaP-Z}). But we emphasis that, in the framework of nonlinear expectation,  
 the space-indexed increments  does not satisfy 
the property of independence. Once the
nonlinear $G$-expectation reduced to its special situation of linear
expectation, the property of independence for the space indexed part
turns out to be true, thanks for the Fubini Lemma for the linear expectation.
The Gaussian property for the time indexed part also turns out to
be true, thanks to the important property of (linear) normal distributions
that independent Gaussian random variables constitute a Gaussian random
vector. 

The objective of this paper is to construct a white noise of space-indexed,
as well as space-time-indexed. We will see that such type of space-indexed
white noise is a special type of Gaussian random field under probability
uncertainty, satisfying distributional invariant of rotations and
translations (see Proposition~\ref{x-invariant}).

An important and interesting point observed in Peng \cite{Peng2011} is
that a $G$-Brownian motion $(B_{t})_{t\geq0}$ is very efficient
to quantify time-indexed random fields, but it is no longer suitable
to describe space-indexed uncertainties. A new notion of Gaussian
process was then introduced in Peng \cite{Peng2011}. Our main point of
view is that under a sublinear expectation, a framework of $G$-Gaussian random
fields is suitable for describing spatial random fields.

In this paper we will systematically develop a new type of spatial
Gaussian random fields under a sublinear expectation space. It is
remarkable that is a finite distribution of a $(B_{t_{1}},\cdots,B_{t_{n}})$
is no longer $G$-normally distributed. 

In many literature, the index $t$ in Brownian motion $B_{t}$ is also
treated as a spatial parameter, i.e., $t=x\in\mathbb{R}$. This method
is widely used in the study of space-time indexed random field. In
fact, this can be justified by Fubini theorem, in which time and space
is exchangeable. However, Fubini theorem does not hold for the nonlinear
expectation framework. So we need to establish a new theoretical framework
in which space and time are treated separately for obtaining a spatial-temporal
random fields, and then develop a new space-time indexed white noise
theory.

We hope that this new framework of spatial-temporal random fields
is useful  in quantitative financial risk
measures, 
%(see F\"ollmer  \cite{Foellmer2014}(2014)), 
complex random networks, stochastic partial
differential equations, non-equlibrium statistical physics, stochastic
control system and games, large scale robust economics, and spatial
and temporal indexed data. 

With the quickly increasing complexity of the internet network, its
uncertainty in probability measures as a space indexed random field
is increasing. There is an urgent need for an exact theoretical framework
of nonlinear expectation in this field.

We establish the general random field under the nonlinear expectation
framework. In particular, we construct spatial random fields and spatial-temporal
white noises, and then the related stochastic calculus theory. Specially,
if the nonlinear expectation degrades to a linear expectation, the
corresponding random field theory and space-time indexed white noise
coincide with classical results. 

This paper is organized as follows. In Section 2, we review some basic
notions and results of nonlinear expectation theory, the notion of
distributions and independence of random vectors, the notion of $G$-normal
distribution, $G$-Brownian motion and their basic properties. In
Section 3, we first provide a generalized Kolmogorov's existence theorem
based from a consistently defined family of finite dimensional distributions
in the framework of nonlinnear expectations. The existence and consistence
of a type of $G$-Gaussian random fields is also provided. In Section
4, we construct the spatial white noise and the corresponding stochastic
integral. The existence and stochastic integral of space-time-indexed
white noise are discussed in Section 5. 

We are aware of  deep research results of F\"ollmer \cite{Foellmer2014} and F\"ollmer \& Kl\"uppelberg
\cite{Foellmer} (2014)  on spatial risk measures and will explore
the relations between the two seemingly very different approaches.

\section{Preliminaries}

In this section, we present some basic notions and properties in nonlinear
expectation. More details can be found in \cite{DHP}, \cite{P2007}, and \cite{P2010}
.

\subsection{Basic notions of nonlinear expectations}

Let $\Omega$ be a given nonempty set and $\mathcal{H}$ be a linear
space of real-valued functions on $\Omega$ such that if $X_{1}$,$\dots$,$X_{d}\in\mathcal{H}$,
then $\varphi(X_{1},X_{2},\dots,X_{d})\in\mathcal{H}$ for each $\varphi\in C_{l.Lip}(\mathbb{R}^{d})$,
where $C_{l.Lip}(\mathbb{R}^{d})$ denotes the linear space of functions
satisfying for each $\ x,y\in\mathbb{R}^{d}$, 
\[
|\varphi(x)-\varphi(y)|\leq C_{\varphi}(1+|x|^{m}+|y|^{m})|x-y|,for\ some\ C_{\varphi}>0,m\in\mathbb{N}\ depending\ on\ \varphi.
\]
$\mathcal{H}$ is considered as the space of random variables. $X=(X_{1},\dots,X_{d})$,
$X_{i}\in\mathcal{H}$, $1\leq i\leq d$, is called a $d$-dimensional
random vector, denoted by $X\in\mathcal{H}^{d}$. 
\begin{defn}
\label{sublinear expectation} A \textbf{sublinear expectation} $\hat{\mathbb{E}}$
on $\mathcal{H}$ is a functional $\mathbb{\hat{E}}:\mathcal{H}\rightarrow\mathbb{R}$
satisfying the following properties: for each $X,Y\in\mathcal{H}$, 
\begin{itemize}
\item[(i)] \textbf{Monotonicity:}\quad{}$\mathbb{\hat{E}}[X]\geq\mathbb{\hat{E}}[Y]\ \ \text{if}\ X\geq Y$; 
\item [{(ii)}] \textbf{Constant preserving:}\quad{}$\mathbb{\hat{E}}[c]=c\ \ \ \text{for}\ c\in\mathbb{R}$; 
\item [{(iii)}] \textbf{Sub-additivity:}\quad{}$\mathbb{\hat{E}}[X+Y]\leq\mathbb{\hat{E}}[X]+\mathbb{\hat{E}}[Y]$; 
\item [{(iv)}] \textbf{Positive homogeneity:}\quad{}$\mathbb{\hat{E}}[\lambda X]=\lambda\mathbb{\hat{E}}[X]\ \ \ \text{for}\ \lambda\geq0$. 

\noindent The triplet $(\Omega,\mathcal{H},\mathbb{\hat{E}})$ is called a \textbf{sublinear
expectation space}. If only (i) and (ii) are satisfied, then $\mathbb{\hat{E}}$
is called a \textbf{nonlinear expectation} and the triplet $(\Omega,\mathcal{H},\mathbb{\hat{E}})$
is called a \textbf{nonlinear expectation space}. 
\item [{(v)}] \textbf{Regularity:} \,\,\, If $\{X_{i}\}_{i=1}^{\infty}\subset\mathcal{H}$
be such that $X_{i}(\omega)\downarrow0$ as $i\rightarrow\infty$, for each $\omega\in\Omega$,
then 
\[
\lim_{i\to\infty}\hat{\mathbb{E}}[X_{i}]=0.
\]
\end{itemize}
\end{defn}
 In this paper we
are mainly interested in sublinear expectations satisfying the regular condition. 
\begin{thm}
Let $\hat{\mathbb{E}}$ be a sublinear expectation on $(\Omega,\mathcal{H})$
satisfying the regularity condition (v) in Definition \ref{sublinear expectation}.
Then there exists a family of unique probability measures $\{P_{\theta}\}_{\theta\in\Theta}$,
such that 
\[
\hat{\mathbb{E}}[X]=\max_{\theta\in\Theta}\int_{\Omega}X(\omega)dP_{\theta},\ \forall X\in\mathcal{H}.
\]

\end{thm}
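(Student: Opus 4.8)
The plan is to realize this as a representation theorem for sublinear functionals, which is essentially an application of the Hahn--Banach and Riesz--Daniell representation machinery. The statement asserts that a regular sublinear expectation $\hat{\mathbb{E}}$ admits a representation $\hat{\mathbb{E}}[X]=\max_{\theta\in\Theta}\int_\Omega X\,dP_\theta$. I would first fix an arbitrary $X_0\in\mathcal{H}$ and seek a single probability measure $P_{X_0}$ that is dominated by $\hat{\mathbb{E}}$ (i.e.\ $\int Y\,dP_{X_0}\le\hat{\mathbb{E}}[Y]$ for all $Y$) and attains equality at $X_0$, namely $\int X_0\,dP_{X_0}=\hat{\mathbb{E}}[X_0]$. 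Taking $\Theta$ to index all such dominated measures then gives $\hat{\mathbb{E}}[X]\ge\int X\,dP_\theta$ for every $\theta$ by domination, while the existence of an equality-attaining measure for each fixed $X$ upgrades the supremum to a maximum. So the crux is producing, for each fixed $X_0$, a dominated \emph{probability measure} achieving equality.

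To produce such a linear functional, I would apply the Hahn--Banach theorem using $\hat{\mathbb{E}}$ itself as the sublinear (subadditive, positively homogeneous) dominating functional. On the one-dimensional subspace $\{\lambda X_0:\lambda\in\mathbb{R}\}$ define $\ell(\lambda X_0)=\lambda\hat{\mathbb{E}}[X_0]$; one checks $\ell\le\hat{\mathbb{E}}$ there using positive homogeneity and the fact that $\hat{\mathbb{E}}[X_0]+\hat{\mathbb{E}}[-X_0]\ge\hat{\mathbb{E}}[0]=0$ for the negative scalings. Hahn--Banach then extends $\ell$ to a linear functional $I$ on all of $\mathcal{H}$ with $I\le\hat{\mathbb{E}}$. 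Monotonicity of $\hat{\mathbb{E}}$ forces $I$ to be positive (if $Y\ge0$ then $-Y\le0$ gives $I(-Y)\le\hat{\mathbb{E}}[-Y]\le\hat{\mathbb{E}}[0]=0$, hence $I(Y)\ge0$), and constant preservation gives $I(1)=1$, so $I$ is a positive normalized linear functional.

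The remaining and genuinely hard step is to represent the abstract linear functional $I$ as an integral against a countably additive probability measure $P_{X_0}$ on $(\Omega,\sigma(\mathcal{H}))$. This is where the regularity condition (v) is essential: it is precisely the continuity-from-above-at-zero hypothesis that upgrades a finitely additive representation to a countably additive one via a Daniell--Stone type argument. I would verify that $I$ inherits the downward-continuity, $X_i\downarrow 0\Rightarrow I(X_i)\to0$, from $0\le I(X_i)\le\hat{\mathbb{E}}[X_i]\to0$, and then invoke the Daniell--Stone extension theorem to obtain a unique measure $P_{X_0}$ with $I(Y)=\int_\Omega Y\,dP_{X_0}$ for all $Y\in\mathcal{H}$. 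I expect this measure-theoretic upgrade to be the main obstacle, since the purely algebraic Hahn--Banach step only yields finite additivity, and one must check the lattice/Stone conditions on $\mathcal{H}$ (which hold because $\mathcal{H}$ is closed under composition with $C_{l.Lip}$ functions, so it contains $X\wedge c$, $X^+$, and the like) before the countably additive extension applies. Finally, collecting $\{P_{X_0}\}_{X_0\in\mathcal{H}}$ as $\{P_\theta\}_{\theta\in\Theta}$ yields the representation with the supremum attained, and uniqueness of the family follows from the fact that $\hat{\mathbb{E}}$ determines, and is determined by, its set of dominated probability measures.
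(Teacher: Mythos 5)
Your proposal is correct and follows exactly the route the paper relies on: the paper states this result as the standard robust representation theorem (citing Peng \cite{P2010} and the Daniell--Stone theorem), whose proof is precisely your two-step argument --- Hahn--Banach extension from the span of $X_0$ to get a dominated linear functional attaining equality at $X_0$, then the regularity condition (v) to upgrade, via Daniell--Stone, the finitely additive functional to a unique $\sigma$-additive probability measure on $\sigma(\mathcal{H})$. The paper's Appendix (Lemmas \ref{le3}--\ref{le5}) replays this same machinery in its concrete setting, so there is no substantive difference between your approach and the paper's.
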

Let $(\Omega,\mathcal{H},\mathbb{\hat{E}})$ be a nonlinear (resp.
sublinear) expectation space. For each given $d$-dimensional random
vector $X$, we define a functional on $C_{l.Lip}(\mathbb{R}^{d})$
by 
\[
\mathbb{{F}}_{X}[\varphi]:=\mathbb{\hat{E}}[\varphi(X)],\text{ for each }\varphi\in C_{l.Lip}(\mathbb{R}^{d}).
\]
It is easy to verify that $(\mathbb{R}^{d},C_{l.Lip}(\mathbb{R}^{d}),\mathbb{{F}}_{X})$
forms a nonlinear (resp. sublinear) expectation space. $\mathbb{{F}}_{X}$
is called the distribution of $X$. Two $d$-dimensional random vectors
$X_{1}$ and $X_{2}$ defined respectively on nonlinear expectation
spaces $(\Omega_{1},\mathcal{H}_{1},\mathbb{\hat{E}}_{1})$ and $(\Omega_{2},\mathcal{H}_{2},\mathbb{\hat{E}}_{2})$
are called \textbf{identically distributed}, denoted by $X_{1}\overset{d}{=}X_{2}$,
if $\mathbb{{F}}_{X_{1}}=\mathbb{{F}}_{X_{2}}$, i.e., 
\[
\mathbb{\hat{E}}_{1}[\varphi(X_{1})]=\mathbb{\hat{E}}_{2}[\varphi(X_{2})]\text{ for each }\varphi\in C_{l.Lip}(\mathbb{R}^{d}).
\]

Similar to the classical case, Peng \cite{Peng2008a}(2008) gave the
following definition of convergence in distribution. 
\begin{defn}
Let $X_{n}$, $n\geq1$, be a sequence of $d$-dimensional random
vectors defined respectively on nonlinear (resp. sublinear) expectation
spaces $(\Omega_{n},\mathcal{H}_{n},\mathbb{\hat{E}}_{n})$. $\{X_{n}:n\geq1\}$
is said to converge in distribution if, for each fixed $\varphi\in C_{b.Lip}(\mathbb{R}^{d})$,
$\{\mathbb{{F}}_{X_{n}}[\varphi]:n\geq1\}$ is a Cauchy sequence,
where $C_{b.Lip}(\mathbb{R}^{d})$ denotes the set of all bounded
and Lipschitz functions on $\mathbb{R}^{d}$. Define 
\[
\mathbb{{F}}[\varphi]=\lim_{n\rightarrow\infty}\mathbb{{F}}_{X_{n}}[\varphi],
\]
then the triplet $(\mathbb{R}^{d},C_{b.Lip}(\mathbb{R}^{d}),\mathbb{{F}})$
forms a nonlinear (resp. sublinear) expectation space. 
\end{defn}
The following notion of the independence of random variables under
a nonlinear expectation is very useful (see Peng \cite{P2010}(2010)). 
\begin{defn}
Let $(\Omega,\mathcal{H},\mathbb{\hat{E}})$ be a nonlinear expectation
space. An $n$-dimensional random vector $Y$ is said to be \textbf{independent}
from another $m$-dimensional random vector $X$ under the expectation
$\mathbb{\hat{E}}$ if, for each test function $\varphi\in C_{l.Lip}(\mathbb{R}^{m+n})$,
we have 
\[
\mathbb{\hat{E}}[\varphi(X,Y)]=\mathbb{\hat{E}}[\mathbb{\hat{E}}[\varphi(x,Y)]_{x=X}].
\]
Let $\bar{X}$ and $X$ be two $m$-dimensional random vectors on
$(\Omega,\mathcal{H},\hat{\mathbb{E}})$. $\bar{X}$ is called an
independent copy of $X$ if $\bar{X}\overset{d}{=}X$ and $\bar{X}$
is independent from $X$. \end{defn}
\begin{rem}
It is important to note that ``$Y$ is independent from $X$'' does
not imply that ``$X$ is independent from $Y$'' (see Peng \cite{P2010}). 
\end{rem}
For each $p\geq1$ , let $\mathbb{L}^{p}(\Omega)$ be the completion
of $\mathcal{H}$ under the Banach norm $\|X\|_{\mathbb{L}^{p}}:=(\hat{\mathbb{E}}[|X|^{p}])^{\frac{1}{p}}$.
It is easy to verify that $\mathbb{L}^{p}(\Omega)\subset\mathbb{L}^{p^{\prime}}(\Omega)$
for each $1\leq p^{\prime}\leq p$. Since $|\hat{\mathbb{E}}[X]-\hat{\mathbb{E}}[Y]|\leq\hat{\mathbb{E}}[|X-Y|]$,
 $\hat{\mathbb{E}}$ can be continuously extended to the mapping
from $\mathbb{L}^{1}(\Omega)$ to $\mathbb{R}$ and properties $(i)$-$(iv)$
of definition \ref{sublinear expectation} still hold. Moreover, $(\Omega,\mathbb{L}^{1}(\Omega),\hat{\mathbb{E}})$
also forms a sublinear expectation space, which is called a complete
sublinear expectation space.

We say that $X=Y$ in $\mathbb{L}^{p}(\Omega)$ if $\hat{\mathbb{E}}[|X-Y|^{p}]=0$,
and denote by $X\geq Y$ or $Y\leq X$ if $X-Y=(X-Y)^{+}$.

\subsection{G-normal distributions under a sublinear expectation space}

In the rest part of this paper, we focus ourselves to a sublinear
expectation space $(\Omega,\mathcal{H},\hat{\mathbb{E}})$. The following
simple lemma is quite useful: 
\begin{lem}\label{pmLemma} (see Peng \cite{P2010})
If a ramdom variable $\xi\in\mathcal{H}$ satisfies $\hat{\mathbb{E}}[\xi]=\hat{\mathbb{E}}[-\xi]=0$,
then we have 
\[
\hat{\mathbb{E}}[\xi+\eta]=\hat{\mathbb{E}}[\eta],\,\,\,\forall\eta\in\mathcal{H}.
\]
\end{lem}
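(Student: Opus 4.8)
The plan is to prove the identity $\hat{\mathbb{E}}[\xi+\eta]=\hat{\mathbb{E}}[\eta]$ by establishing two inequalities and combining them. The key structural fact I would exploit is sub-additivity (property (iii)), which gives an upper bound for free, together with the hypothesis $\hat{\mathbb{E}}[\xi]=\hat{\mathbb{E}}[-\xi]=0$ to control the error terms in both directions.

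First I would prove the inequality $\hat{\mathbb{E}}[\xi+\eta]\leq\hat{\mathbb{E}}[\eta]$. Applying sub-additivity directly to the sum $\xi+\eta$ yields
\[
\hat{\mathbb{E}}[\xi+\eta]\leq\hat{\mathbb{E}}[\xi]+\hat{\mathbb{E}}[\eta]=\hat{\mathbb{E}}[\eta],
\]
since $\hat{\mathbb{E}}[\xi]=0$ by assumption. This direction is immediate and requires no further work.

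For the reverse inequality $\hat{\mathbb{E}}[\eta]\leq\hat{\mathbb{E}}[\xi+\eta]$, the trick is to write $\eta=(\xi+\eta)+(-\xi)$ and apply sub-additivity to this decomposition. This gives
\[
\hat{\mathbb{E}}[\eta]=\hat{\mathbb{E}}[(\xi+\eta)+(-\xi)]\leq\hat{\mathbb{E}}[\xi+\eta]+\hat{\mathbb{E}}[-\xi]=\hat{\mathbb{E}}[\xi+\eta],
\]
where I use the second half of the hypothesis, namely $\hat{\mathbb{E}}[-\xi]=0$. Combining the two inequalities forces equality, completing the proof.

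There is no serious obstacle here; the only subtlety is recognizing that one must invoke \emph{both} hypotheses $\hat{\mathbb{E}}[\xi]=0$ and $\hat{\mathbb{E}}[-\xi]=0$, one for each inequality, and that the reverse direction requires the slightly non-obvious regrouping $\eta=(\xi+\eta)+(-\xi)$ so that sub-additivity can be applied in the useful direction. Since $\eta$ is an arbitrary element of $\mathcal{H}$, no integrability or regularity assumptions beyond membership in $\mathcal{H}$ are needed, and the result holds in full generality.
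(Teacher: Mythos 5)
Your proof is correct and is exactly the standard argument: the paper itself gives no proof (it cites Peng's book), and the proof there is precisely this two-sided application of sub-additivity, using $\hat{\mathbb{E}}[\xi]=0$ for the upper bound and the decomposition $\eta=(\xi+\eta)+(-\xi)$ together with $\hat{\mathbb{E}}[-\xi]=0$ for the lower bound. Nothing is missing.
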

\begin{defn}
A $d$-dimensional random vector $X=(X_{1},\cdots,X_{d})$ on a sublinear
expectation space $(\Omega,\mathcal{H},\hat{\mathbb{E}})$ is called
(centralized) \textbf{$G$-normally distributed} if 
\[
aX+b\bar{X}\overset{d}{=}\sqrt{a^{2}+b^{2}}X\ \ \ \text{for }a,b\geq0,\ 
\]
where $\bar{X}$ is an independent copy of $X$. 
\end{defn}
We denote by $\mathbb{S}(d)$ the collection of all $d\times d$ symmetric
matrices. The distribution of a $G$-normally distributed random vector
$X$ defined on $(\Omega,\mathcal{H},\hat{\mathbb{E}})$ is uniquely
characterized by a function $G=G_{X}:\mathbb{S}(d)\mapsto\mathbb{R}$
defined as follows: 
\begin{equation}
G_{X}(Q):=\frac{1}{2}\hat{\mathbb{E}}[\langle QX,X\rangle],\ \ Q\in\mathbb{S}(d).\label{G definition}
\end{equation}
It is easy to check that $G$ is a sublinear and continuous function
monotone in $Q\in\mathbb{S}(d)$ in the following sense: for each
$Q,\bar{Q}\in\mathbb{S}(d),$ 
\begin{equation}
\left\{ \begin{array}{l}
G(Q+\bar{Q})\leq G(Q)+G(\bar{Q}),\\
G(\lambda Q)=\lambda G(Q),\quad\forall\lambda\geq0,\\
G(Q)\geq G(\bar{Q}),\qquad if\ Q\geq\bar{Q}.
\end{array}\right.\label{G properties}
\end{equation}

\begin{prop}
\label{G-normal existence} Let $G:\mathbb{S}(d)\rightarrow\mathbb{R}$
be a given sublinear and continuous function, monotone in $Q\in\mathbb{S}(d)$
in the sense of (\ref{G properties}). Then there exists a $G$-normally
distributed $d$-dimensional random vector $X$ on some sublinear
expectation space $(\Omega,\mathcal{H},\hat{\mathbb{E}})$ satisfying
(\ref{G definition}). Moreover, if $\bar{X}$ is also a normally
distributed random variable such that 
\[
G_{X}(Q)=G_{\bar{X}}(Q),\,\,\,\,\,\text{ for any }\,\,Q\in\mathbb{S}(d),%\frac{1}{2}\mathbb{E}[\langleQX,X\rangle]=\frac{1}{2}\mathbb{E}[\langleQ\bar{X},\bar{X}\rangle],\,\,\,\forallQ\in\mathbb{S}(d),
\]
then we have $X\overset{d}{=}\bar{X}$. 
\end{prop}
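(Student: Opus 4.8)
The plan is to reduce both the existence and the uniqueness to the theory of the associated fully nonlinear parabolic PDE, the $G$-heat equation
\[
\partial_t u - G(D^2_x u) = 0, \qquad u|_{t=0} = \varphi,
\]
whose nonlinearity is exactly the given $G$. Conditions (\ref{G properties}) say precisely that $Q\mapsto G(Q)$ is sublinear and degenerate elliptic, so this equation falls within the scope of the Crandall--Ishii--Lions viscosity-solution theory; in particular it admits a unique continuous viscosity solution $u^{\varphi}$ for each $\varphi\in C_{l.Lip}(\mathbb{R}^{d})$, and the comparison principle holds. I will use $u^{\varphi}$ as the single analytic object tying the two halves of the statement together.

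For existence, I would first record the representation that sublinearity and monotonicity of $G$ force: there is a bounded, closed, convex set $\Sigma\subset\mathbb{S}(d)$ of nonnegative-definite matrices with $G(Q)=\tfrac{1}{2}\sup_{A\in\Sigma}\operatorname{tr}(AQ)$. Then I would take the canonical setting $\Omega=\mathbb{R}^{d}$, $\mathcal{H}=C_{l.Lip}(\mathbb{R}^{d})$, $X(\omega)=\omega$, and define $\hat{\mathbb{E}}[\varphi(X)]:=u^{\varphi}(1,0)$, more generally $\hat{\mathbb{E}}[\varphi(x+\sqrt{t}X)]:=u^{\varphi}(t,x)$. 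Monotonicity and constant preservation come directly from the comparison principle; sub-additivity and positive homogeneity are inherited from the inequalities $u^{\varphi+\psi}\le u^{\varphi}+u^{\psi}$ and $u^{\lambda\varphi}=\lambda u^{\varphi}$, which follow from the sublinearity of $G$ together with uniqueness. The self-similarity $aX+b\bar{X}\overset{d}{=}\sqrt{a^{2}+b^{2}}X$ is then the semigroup/scaling property $u^{\varphi}(t+s,x)=u^{u^{\varphi}(s,\cdot)}(t,x)$ of the solution, and finally (\ref{G definition}) is recovered by taking $\varphi_{Q}(x)=\tfrac{1}{2}\langle Qx,x\rangle$ and differentiating: $\tfrac{1}{2}\hat{\mathbb{E}}[\langle QX,X\rangle]=\partial_t u^{\varphi_{Q}}(0,0)=G(D^{2}\varphi_{Q})=G(Q)$.

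For uniqueness, given two $G$-normal vectors $X,\bar{X}$ with $G_{X}=G_{\bar{X}}=G$, I would show that the two distribution flows $u(t,x):=\hat{\mathbb{E}}[\varphi(x+\sqrt{t}X)]$ and $\bar{u}(t,x):=\hat{\mathbb{E}}[\varphi(x+\sqrt{t}\bar{X})]$ are both viscosity solutions of the same $G$-heat equation with the same datum $\varphi$. The heart of this is a local expansion. Using $G$-normality to write $\sqrt{t+s}X\overset{d}{=}\sqrt{s}X+\sqrt{t}\bar{X}$ and then the independence of the copy $\bar{X}$ from $X$ (together with $\bar{X}\overset{d}{=}X$), one obtains the one-step relation $u(t+s,x)=\hat{\mathbb{E}}[u(t,x+\sqrt{s}X)]$. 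A Taylor expansion of $u(t,\cdot)$ in $\sqrt{s}$, in which the first-order term is killed because a $G$-normal vector has $\hat{\mathbb{E}}[\langle p,X\rangle]=\hat{\mathbb{E}}[-\langle p,X\rangle]=0$ (so that Lemma~\ref{pmLemma} applies to the increment), leaves $u(t+s,x)-u(t,x)=\tfrac{s}{2}\hat{\mathbb{E}}[\langle D^{2}u(t,x)X,X\rangle]+o(s)=s\,G(D^{2}u(t,x))+o(s)$, i.e. $\partial_t u=G(D^{2}u)$, and the same computation gives the equation for $\bar{u}$. By the comparison principle $u\equiv\bar{u}$, and evaluating at $(t,x)=(1,0)$ yields $\hat{\mathbb{E}}[\varphi(X)]=\hat{\mathbb{E}}[\varphi(\bar{X})]$; a density argument extends this from smooth $\varphi$ to all of $C_{l.Lip}(\mathbb{R}^{d})$, which is exactly $X\overset{d}{=}\bar{X}$.

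The main obstacle is the PDE input taken for granted above: the well-posedness of the degenerate, fully nonlinear parabolic $G$-heat equation. Concretely, the whole argument hinges on the comparison principle (hence uniqueness) for its viscosity solutions, and on enough interior regularity---or, failing that, a careful purely viscosity-theoretic reading of the expansion through smooth test functions touching $u$ from above and below---to make the formal Taylor computation rigorous without assuming $u\in C^{1,2}$. Verifying that $\varphi\mapsto u^{\varphi}(1,0)$ genuinely defines a sublinear expectation, and that the two flows solve the same equation in the viscosity sense, is where the real work lies; the algebraic bookkeeping with independence and the scaling relation is routine once the mean-zero property and Lemma~\ref{pmLemma} are in hand.
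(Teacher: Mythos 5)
Your strategy is precisely the one behind the paper's citation of this result (the paper itself gives no proof; it relies on Peng's construction in \cite{P2010}): existence by defining the expectation through the viscosity solution of the $G$-heat equation, and uniqueness by showing that the distribution flow of any $G$-normal vector with generating function $G$ solves that same equation (this is exactly Proposition~\ref{G heat equation}) and then invoking the comparison principle. Your uniqueness half is sound in all essentials: the dynamic programming identity $u(t+s,x)=\hat{\mathbb{E}}[u(t,x+\sqrt{s}X)]$ via $\sqrt{t+s}X\overset{d}{=}\sqrt{s}X+\sqrt{t}\bar{X}$, the killing of the first-order term by $\hat{\mathbb{E}}[\pm\langle p,X\rangle]=0$ together with Lemma~\ref{pmLemma}, and the reduction of the Taylor step to smooth test functions are exactly the standard argument.

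There is, however, one concrete gap in your existence half. You take $\Omega=\mathbb{R}^{d}$, $\mathcal{H}=C_{l.Lip}(\mathbb{R}^{d})$, $X(\omega)=\omega$. On this space there is no independent copy $\bar{X}$ of $X$: the defining property of $G$-normality, $aX+b\bar{X}\overset{d}{=}\sqrt{a^{2}+b^{2}}X$, requires $\bar{X}$ to live on the \emph{same} sublinear expectation space as $X$, and $aX+b\bar{X}$ is simply not an element of $\mathcal{H}^{d}$ in your setting, so the property you need to verify cannot even be stated there. The semigroup identity $u^{\varphi}(t+s,x)=u^{u^{\varphi}(s,\cdot)}(t,x)$ is the analytic shadow of that property, not a substitute for it. The repair is Peng's actual construction: take the product space $\Omega=\mathbb{R}^{d}\times\mathbb{R}^{d}$ (or $(\mathbb{R}^{d})^{\mathbb{N}}$), set $X(\omega)=\omega_{1}$, $\bar{X}(\omega)=\omega_{2}$, and define the joint expectation by nested application of the PDE semigroup,
\[
\hat{\mathbb{E}}[\psi(X,\bar{X})]:=u_{1}^{\psi_{1}}(1,0),\qquad \psi_{1}(x):=u^{\psi(x,\cdot)}(1,0),
\]
so that $\bar{X}$ is independent from $X$ by construction and both have the marginal $\varphi\mapsto u^{\varphi}(1,0)$; the semigroup property then yields the scaling identity, and the evaluation $\hat{\mathbb{E}}[\tfrac{1}{2}\langle QX,X\rangle]=G(Q)$ follows from the explicit solution $u(t,x)=\tfrac{1}{2}\langle Qx,x\rangle+tG(Q)$. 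With that amendment your proof coincides with the standard one.
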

The function $G_{X}(\cdot)$ associated to the distribution function
of $G$-normal variable $X$ is called the \textbf{generating function}
of $X$. 
\begin{prop}
\label{G heat equation} Let $G$ be given as in Proposition \ref{G-normal existence}
and $X$ be a $d$-dimensional random vector on a sublinear expectation
space $(\Omega,\mathcal{H},\hat{\mathbb{E}})$ such that $X$ is $G$-normally
distributed. For each $\varphi\in C_{l.Lip}(\mathbb{R}^{d})$, define
\begin{equation}
u(t,x):=\hat{\mathbb{E}}[\varphi(x+\sqrt{t}X)],\ (t,x)\in\lbrack0,\infty)\times\mathbb{R}^{d}.\label{G solution}
\end{equation}
Then $u$ is the unique viscosity solution of the $G$-heat equation
\begin{equation}
\partial_{t}u-G(D^{2}u)=0,\ \ u|_{t=0}=\varphi(x)\label{G equation}
\end{equation}

\end{prop}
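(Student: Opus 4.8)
The plan is to exhibit $u$ as the value function of a nonlinear semigroup generated by the $G$-normal law and to extract the equation infinitesimally by a viscosity (test-function) argument, after which uniqueness will follow from a comparison principle for the fully nonlinear parabolic operator $\partial_t-G(D^2\,\cdot\,)$. The first and decisive step is the \emph{consistency identity}
\[
u(t+s,x)=\hat{\mathbb{E}}\big[u(t,x+\sqrt{s}\,X)\big],\qquad s,t\ge 0,\ x\in\mathbb{R}^d,
\]
which is the analytic form of the self-similarity built into the definition of $G$-normality. Letting $\bar X$ be an independent copy of $X$, the identity $\sqrt{s}\,X+\sqrt{t}\,\bar X\overset{d}{=}\sqrt{s+t}\,X$ gives $u(t+s,x)=\hat{\mathbb{E}}[\varphi(x+\sqrt{s}\,X+\sqrt{t}\,\bar X)]$, and since $\bar X$ is independent from $X$ the definition of independence lets me integrate the copy $\bar X$ out first; because $\bar X\overset{d}{=}X$ this inner expectation is exactly $u(t,x+\sqrt{s}\,X)$, and the outer expectation over $X$ produces the claim. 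The only point requiring care is the asymmetry of independence, but here $\bar X$ is the inner integration variable, so the tower formula applies directly.

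With the identity in hand, I would verify the sub- and supersolution properties. Assuming first that $\varphi$ is smooth enough that $u$ is continuous of polynomial growth, let $\phi$ be a smooth test function touching $u$ from above at an interior point $(t_0,x_0)$ with $t_0>0$, and apply the identity with total time $t_0$, running backwards a small step $\delta$:
\[
\phi(t_0,x_0)=u(t_0,x_0)=\hat{\mathbb{E}}\big[u(t_0-\delta,x_0+\sqrt{\delta}\,X)\big]\le\hat{\mathbb{E}}\big[\phi(t_0-\delta,x_0+\sqrt{\delta}\,X)\big].
\]
Taylor expanding $\phi$ to second order in space and first order in time, the constant part $\phi(t_0,x_0)-\delta\,\partial_t\phi$ pulls out by constant preservation; the first-order term $\sqrt{\delta}\,\langle D\phi,X\rangle$ satisfies $\hat{\mathbb{E}}[\pm\langle D\phi,X\rangle]=0$ because a centralized $G$-normal vector has zero mean in every direction, so Lemma~\ref{pmLemma} discards it; and the second-order term is identified through (\ref{G definition}) as $\tfrac{\delta}{2}\hat{\mathbb{E}}[\langle D^2\phi\,X,X\rangle]=\delta\,G(D^2\phi)$. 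Dividing by $\delta$ and letting $\delta\to0$ yields $\partial_t\phi-G(D^2\phi)\le0$ at $(t_0,x_0)$; the reverse inequality for test functions touching from below is identical, so $u$ is a viscosity solution, and $u(0,x)=\hat{\mathbb{E}}[\varphi(x)]=\varphi(x)$ by constant preservation.

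Two supporting items complete existence. I would first record the continuity estimates --- Lipschitz continuity in $x$ inherited from $\varphi$ and $\tfrac12$-H\"older continuity in $t$ from the self-similar scaling --- which make $u$ a continuous function of polynomial growth and, together with the finiteness of $\hat{\mathbb{E}}[|X|^{3}]$, guarantee that the Taylor remainders above are uniformly $o(\delta)$. For a general $\varphi\in C_{l.Lip}(\mathbb{R}^d)$ I would mollify, solve for the smooth approximants, and pass to the limit using the stability of viscosity solutions together with these uniform estimates.

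The remaining, and genuinely external, ingredient is uniqueness, which I would obtain from the comparison principle for the degenerate parabolic equation (\ref{G equation}): since $G$ is convex, Lipschitz and degenerate elliptic by (\ref{G properties}), the standard theory of viscosity solutions (Crandall--Ishii--Lions) applies. I expect the main obstacle to be precisely this comparison principle under possible degeneracy of $G$ --- controlling the doubling-of-variables argument when $G$ is not uniformly elliptic --- rather than the representation itself, whose nonlinear heart is cleanly handled by the consistency identity and Lemma~\ref{pmLemma}.
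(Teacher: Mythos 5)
The paper does not prove this proposition at all---it is imported as a known preliminary from Peng's framework (\cite{P2010}, \cite{P2007})---so there is no internal argument to compare against; your proposal is a correct reconstruction, in outline, of the standard proof from those references: the consistency identity $u(t+s,x)=\hat{\mathbb{E}}[u(t,x+\sqrt{s}X)]$ obtained from $\sqrt{s}X+\sqrt{t}\bar{X}\overset{d}{=}\sqrt{s+t}X$ together with the asymmetric definition of independence (correctly integrating out the copy $\bar{X}$ first), then the second-order Taylor step in which Lemma \ref{pmLemma} removes the gradient term and (\ref{G definition}) identifies the quadratic term as $G(D^{2}\phi)$, with uniqueness delegated to the Crandall--Ishii--Lions comparison principle.

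Two points deserve care if you write this out in full. First, the mollification of $\varphi$ is unnecessary: the local Lipschitz estimate in $x$ and the $\tfrac12$-H\"older estimate in $t$ hold for every $\varphi\in C_{l.Lip}(\mathbb{R}^{d})$ directly (smoothness is needed only for the test function $\phi$, not for $u$ or $\varphi$), so the viscosity argument applies to $u$ itself. Second, and more substantively, the inequality
\[
\hat{\mathbb{E}}\bigl[u(t_{0}-\delta,x_{0}+\sqrt{\delta}X)\bigr]\le\hat{\mathbb{E}}\bigl[\phi(t_{0}-\delta,x_{0}+\sqrt{\delta}X)\bigr]
\]
requires $u\le\phi$ on all of $\{t_{0}-\delta\}\times\mathbb{R}^{d}$, since $X$ is unbounded; a test function touching $u$ from above only \emph{locally} does not justify this step. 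The standard fixes are either to adopt the definition of viscosity solution with globally dominating test functions (as Peng does, working in $C_{b}^{2,3}$ and invoking equivalence of the definitions), or to modify $\phi$ away from $(t_{0},x_{0})$ using the polynomial growth of $u$ so that the domination becomes global without changing $\partial_{t}\phi$, $D\phi$, $D^{2}\phi$ at the touching point. This is routine, but it is a genuine hole in the argument as literally written.
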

The following property of a $G$-normally distributed random vector
$\xi$ is easy to check: 
\begin{prop}
Let $\xi$ be a $d$-dimensional $G$-normally distributed random
vector characterized by its generating function 
\[
G_{\xi}(Q):=\frac{1}{2}\hat{\mathbb{E}}[\langle Q\xi,\xi\rangle],\,\,\,Q\in\mathbb{S}(d).
\]
Then for any matrix $K\in\mathbb{R}^{m\times d}$, $K\xi$ is also
an $m$-dimensional $G$-normally distributed random vector. Its corresponding
generating function is 
\begin{equation}
G_{K\xi}(Q)=\frac{1}{2}\hat{\mathbb{E}}[\langle K^{T}QK\xi,\xi\rangle],\,\,\,Q\in\mathbb{S}(m).\label{Kx1}
\end{equation}
\end{prop}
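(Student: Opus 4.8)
The plan is to verify the two defining ingredients directly from the definition of $G$-normality, namely that $Y:=K\xi$ satisfies $aY+b\bar Y\overset{d}{=}\sqrt{a^{2}+b^{2}}\,Y$ for an independent copy $\bar Y$, and then to read off its generating function from \eqref{G definition}. The natural candidate for an independent copy of $Y$ is $\bar Y:=K\bar\xi$, where $\bar\xi$ is an independent copy of $\xi$. So the first step is to confirm that $K\bar\xi$ is indeed an independent copy of $K\xi$.

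For the identical-distribution part this is immediate: if $\bar\xi\overset{d}{=}\xi$, then for every $\varphi\in C_{l.Lip}(\mathbb{R}^{m})$ the function $x\mapsto\varphi(Kx)$ lies in $C_{l.Lip}(\mathbb{R}^{d})$, so $\hat{\mathbb{E}}[\varphi(K\bar\xi)]=\hat{\mathbb{E}}[\varphi(K\xi)]$, giving $K\bar\xi\overset{d}{=}K\xi$. For the independence part I would take any $\psi\in C_{l.Lip}(\mathbb{R}^{2m})$ and set $\varphi(x,\bar x):=\psi(Kx,K\bar x)\in C_{l.Lip}(\mathbb{R}^{2d})$; applying the independence of $\bar\xi$ from $\xi$ to $\varphi$ and rewriting the inner conditional expectation as $g(y):=\hat{\mathbb{E}}[\psi(y,K\bar\xi)]$ evaluated at $y=K\xi$ yields exactly $\hat{\mathbb{E}}[\psi(Y,\bar Y)]=\hat{\mathbb{E}}[\,\hat{\mathbb{E}}[\psi(y,\bar Y)]_{y=Y}\,]$, which is the independence of $\bar Y$ from $Y$.

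With the independent copy in hand, the defining relation follows from linearity: for $a,b\geq0$ we have $aY+b\bar Y=K(a\xi+b\bar\xi)$, and since $\xi$ is $G$-normal, $a\xi+b\bar\xi\overset{d}{=}\sqrt{a^{2}+b^{2}}\,\xi$; composing once more with $K$ (again using that $\varphi\circ K\in C_{l.Lip}$ whenever $\varphi\in C_{l.Lip}$) gives $K(a\xi+b\bar\xi)\overset{d}{=}K(\sqrt{a^{2}+b^{2}}\,\xi)=\sqrt{a^{2}+b^{2}}\,Y$. Hence $Y=K\xi$ is $m$-dimensional $G$-normally distributed. Finally, the generating function is obtained from the adjoint identity $\langle QK\xi,K\xi\rangle=\langle K^{T}QK\xi,\xi\rangle$, noting that $K^{T}QK\in\mathbb{S}(d)$ for $Q\in\mathbb{S}(m)$, so that $G_{K\xi}(Q)=\frac{1}{2}\hat{\mathbb{E}}[\langle QK\xi,K\xi\rangle]=\frac{1}{2}\hat{\mathbb{E}}[\langle K^{T}QK\xi,\xi\rangle]$.

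The only genuinely delicate step is the preservation of independence under $K$: because independence under a sublinear expectation is not symmetric, one must apply the same map $K$ to both coordinates and keep the nesting order intact, which is precisely what the computation above respects. Everything else is routine bookkeeping with $C_{l.Lip}$ test functions and the linearity of $K$.
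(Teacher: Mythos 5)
Your proof is correct and follows essentially the same route as the paper's: take $\bar\xi$ an independent copy of $\xi$, use that $\varphi\circ K\in C_{l.Lip}$ to transport the defining relation $a\xi+b\bar\xi\overset{d}{=}\sqrt{a^{2}+b^{2}}\,\xi$ through $K$, and read off the generating function from the adjoint identity. The only difference is that you spell out the two steps the paper dismisses as ``clear'' and ``easy to check'' --- namely that $K\bar\xi$ is an independent copy of $K\xi$ (with the nesting order preserved) and the identity $\langle QK\xi,K\xi\rangle=\langle K^{T}QK\xi,\xi\rangle$ --- which is a welcome, but not substantively different, elaboration.
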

\begin{proof}
Let $\bar{\xi}$ be an independent copy of $\xi$. It is clear that
$K\bar{\xi}$ is also an independent copy of $K\xi$. Thus, for each
$\varphi\in C_{l.Lip}(\mathbb{R}^{m})$, the function $\varphi_{K}(x):=\varphi(Kx)$,
$x\in\mathbb{R}^{d}$, is also a $C_{l.Lip}(\mathbb{R}^{d})$ function.
It follows from the definition of the $G$-normal distribution that, for any $a,b\geq0$,
\[
\hat{\mathbb{E}}[\varphi(aK\xi+bK\bar{\xi})]=\hat{\mathbb{E}}[\varphi_{K}(a\xi+b\bar{\xi})]=\hat{\mathbb{E}}[\varphi_{K}(\sqrt{a^{2}+b^{2}}\xi)]=\hat{\mathbb{E}}[\varphi(\sqrt{a^{2}+b^{2}}K\xi)].
\]
Hence $K\xi$ is also normally distributed. Relation (\ref{Kx1})
is easy to check. 
\end{proof}
By Daniell-Stone Theorem, we have the following robust
representation theorem of sublinear expectations which is quite useful. 
\begin{thm}
(see Peng \cite{P2010}) Let $(\Omega,\mathcal{H},\hat{\mathbb{E}})$
be a sublinear expectation space satisfying the  regular
property. Then there exists a weakly
compact set $\mathcal{P}$ of probability measures defined on $(\Omega,\sigma(\mathcal{H}))$
such that 
\[
\hat{\mathbb{E}}[X]=\max_{P\in\mathcal{P}}E_{P}[X]\text{ for all }X\in\mathcal{H},
\]
where $\sigma(\mathcal{H})$ is the $\sigma$-algebra generated by
all functions in $\mathcal{H}$. $\mathcal{P}$ is called the the
family of probability measures that represents $\hat{\mathbb{E}}$. 
\end{thm}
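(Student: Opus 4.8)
The plan is to exhibit $\hat{\mathbb{E}}$ as the upper envelope of a family of linear functionals and then to identify each such functional with integration against a countably additive probability measure, the regularity condition (v) being precisely what forces countable additivity through the Daniell--Stone theorem. First I would invoke the Hahn--Banach theorem: by properties (iii) and (iv) of Definition~\ref{sublinear expectation}, $\hat{\mathbb{E}}$ is a sublinear functional on the vector space $\mathcal{H}$, so for every $X_{0}\in\mathcal{H}$ there is a linear functional $\ell$ on $\mathcal{H}$ dominated by $\hat{\mathbb{E}}$, i.e. $\ell(X)\le\hat{\mathbb{E}}[X]$ for all $X$, with $\ell(X_{0})=\hat{\mathbb{E}}[X_{0}]$. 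Writing $\mathcal{Q}$ for the set of all such dominated linear functionals, this yields
\[
\hat{\mathbb{E}}[X]=\sup_{\ell\in\mathcal{Q}}\ell(X),\qquad X\in\mathcal{H},
\]
the supremum being attained at each fixed $X$.

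Next I would verify that every $\ell\in\mathcal{Q}$ is the integration functional of a probability measure. Monotonicity and constant preservation give $\hat{\mathbb{E}}[-X]\le\hat{\mathbb{E}}[0]=0$ whenever $X\ge0$, hence $\ell(X)=-\ell(-X)\ge-\hat{\mathbb{E}}[-X]\ge0$, so $\ell$ is positive; and from $\ell(c)\le\hat{\mathbb{E}}[c]=c$ together with $\ell(-c)\le-c$ one obtains $\ell(c)=c$, so $\ell$ is normalized with $\ell(1)=1$. Since $\mathcal{H}$ is stable under composition with $C_{l.Lip}(\mathbb{R}^{d})$ functions, in particular with $x\mapsto x\wedge c$ and $x\mapsto x\vee c$, it is a Stone vector lattice. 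The decisive point is that regularity (v) supplies the Daniell continuity hypothesis: if $X_{i}\downarrow0$ then $0\le\ell(X_{i})\le\hat{\mathbb{E}}[X_{i}]\to0$. The Daniell--Stone theorem then produces, for each $\ell$, a unique countably additive probability measure $P_{\ell}$ on $\sigma(\mathcal{H})$ with $\ell(X)=E_{P_{\ell}}[X]$ for all $X\in\mathcal{H}$.

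Setting $\mathcal{P}:=\{P_{\ell}:\ell\in\mathcal{Q}\}$ gives $\hat{\mathbb{E}}[X]=\sup_{P\in\mathcal{P}}E_{P}[X]$, and the supremum is attained at each fixed $X$ because the Hahn--Banach step yields, for that $X$, a dominated $\ell$ with $\ell(X)=\hat{\mathbb{E}}[X]$. The remaining and most delicate assertion is that $\mathcal{P}$ may be chosen weakly compact. I would derive this from Banach--Alaoglu: the functionals in $\mathcal{Q}$ are uniformly bounded via $|\ell(X)|\le\hat{\mathbb{E}}[|X|]$, and $\mathcal{Q}$ is convex and closed under pointwise limits, hence weak-$*$ compact in the dual of $\mathcal{H}$; one then transports this compactness through the correspondence $\ell\mapsto P_{\ell}$ to the weak topology on measures. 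I expect this transport to be the main obstacle, since it must reconcile weak-$*$ compactness of functionals on the possibly unbounded space $\mathcal{H}$ with weak compactness of measures tested against bounded functions, and here regularity (v) re-enters to prevent mass from escaping to infinity. By contrast the Hahn--Banach and Daniell--Stone steps are essentially mechanical once (v) is recognized as exactly the Daniell continuity hypothesis; indeed it is (v) that promotes the merely finitely additive set functions of the raw Hahn--Banach statement to genuine countably additive probability measures.
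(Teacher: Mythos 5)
First, a point of reference: the paper does not actually prove this theorem in the body --- it is quoted from Peng \cite{P2010} --- and the only proof-level material in the paper is the Appendix (Lemmas \ref{le3}--\ref{le5} and Theorem \ref{Gt34}), which establishes the representation for the concrete $G$-white noise expectation. Your first two steps are correct and are essentially the standard argument behind the cited result: Hahn--Banach gives the envelope representation with attained supremum (the domination of the one-dimensional functional $\lambda X_{0}\mapsto\lambda\hat{\mathbb{E}}[X_{0}]$ follows from $\hat{\mathbb{E}}[X_{0}]+\hat{\mathbb{E}}[-X_{0}]\geq 0$); monotonicity and constant preservation make each dominated functional positive and normalized; $\mathcal{H}$ is a Stone vector lattice since $x\mapsto x\wedge 1$ belongs to $C_{l.Lip}$; and regularity (v) is exactly the Daniell continuity hypothesis, since $0\leq\ell(X_{i})\leq\hat{\mathbb{E}}[X_{i}]\downarrow 0$. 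Daniell--Stone then yields the $\sigma$-additive probability $P_{\ell}$ on $\sigma(\mathcal{H})$. This is the same mechanism as Lemmas \ref{le4} and \ref{le5}, where Daniell--Stone is applied to finite-dimensional marginals and the measures are glued by Kolmogorov's extension theorem.

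The genuine gap is the weak compactness step, and your own suspicion about it is well founded. What your compactness argument gives (note $\mathcal{H}$ is not a normed space, so it is Tychonoff applied to $\prod_{X\in\mathcal{H}}[-\hat{\mathbb{E}}[-X],\hat{\mathbb{E}}[X]]$ rather than Banach--Alaoglu) is compactness of $\mathcal{P}$ in the topology of convergence of integrals against members of $\mathcal{H}$, i.e.\ convergence of finite-dimensional-type, possibly unbounded, test functions. That is \emph{not} weak compactness of probability measures: weak convergence is tested against bounded continuous functions on a topological $\Omega$, and in the setting where this theorem is actually used ($\Omega$ a Polish space of fields or paths, as in the Appendix) convergence against cylinder functions in $\mathcal{H}$ is strictly weaker than weak convergence --- this is the classical fact that convergence of finite-dimensional distributions does not imply weak convergence on a path space without tightness. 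So the ``transport'' of compactness through $\ell\mapsto P_{\ell}$ is precisely the unproven point, and regularity of $\hat{\mathbb{E}}$ on $\mathcal{H}$ does not hand you the needed uniform tightness by soft arguments. Where the paper (and Peng \cite{P2010}, and \cite{DHP}) actually obtains weak compactness is by hard estimates: moment bounds give a continuous quasi-modification via the Kolmogorov--Chentsov criterion (Theorems \ref{Kolmogorov criterion1} and \ref{modification1}), these moments yield tightness of the induced laws $\mathcal{P}_{1}$ on the Polish space $C_{0}(\Gamma,\mathbb{R})$, Prokhorov's theorem gives relative weak compactness, and one then passes to the weak closure $\mathcal{P}=\overline{\mathcal{P}}_{1}$, checking with a truncation argument (Lemma \ref{le3}, $(X\wedge N)\vee(-N)$) that the representation survives the closure. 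Your outline therefore proves the $\sigma$-additive representation correctly, but the weak compactness assertion remains unestablished; it requires a tightness input that the abstract compactness argument cannot supply.
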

Let $\mathcal{P}$ be a weakly compact set that represents $\hat{\mathbb{E}}$.
For this $\mathcal{P}$, we define capacity 
\[
c(A):=\sup_{P\in\mathcal{P}}P(A),\ \ A\in\mathcal{B}(\Omega).
\]

A set $A\subset\mathcal{B}(\Omega)$ is called polar if $c(A)=0$.
A property holds ``quasi-surely'' (q.s.) if it holds outside a polar
set. In the following, we do not distinguish two random variables
$X$ and $Y$ if $X=Y\ q.s.$ 
\begin{defn}\label{Def-mod}
Let $\Gamma$ be a set of indices. A family of $\mathbb{R}^{d}$-valued
random vectors $(X_{\gamma})_{\gamma\in\Gamma}$ is called a $d$-dimensional
stochastic process indexed by $\Gamma$, or defined on $\Gamma$, if for each $\gamma\in\Gamma$, $X_\gamma\in\mathcal{H}^d$.
Let $(X_{\gamma})_{\gamma\in\Gamma}$ and $(Y_{\gamma})_{\gamma\in\Gamma}$
be two processes indexed by $\Gamma$. $Y$ is called a quasi-modification
of $X$ if for all $\gamma\in\Gamma$, $X_{\gamma}=Y_{\gamma}\ q.s.$ 
\end{defn}

\section{$G$-Gaussian random fields}

\subsection{A general setting of random fields defined on a nonlinear expectation
space}
\begin{defn}
Let $(\Omega,\mathcal{H},\mathbb{\hat{E}})$ be a nonlinear expectation
space and $\Gamma$ be a parameter set. An $m$-dimensional \textbf{random
field} on $(\Omega,\mathcal{H},\mathbb{\hat{E}})$ is a family of
random vectors $W=(W_{\gamma})_{\gamma\in\Gamma}$ such that $W_{\gamma}\in\mathcal{H}^{m}$
for each $\gamma\in\Gamma$. 
\end{defn}
Let us denote the family of all sets of finite indices by 
\[
\mathcal{J}_{\Gamma}:=\{\underline{\gamma}=(\gamma_{1},\cdots,\gamma_{n}):\forall n\in\mathbb{N},\gamma_{1},\cdots,\gamma_{n}\in\Gamma,\gamma_{i}\neq\gamma_{j}\ for\ i\neq j,1\leq i,j\leq n\}.
\]
Now we give the notion of finite dimensional distribution of the random
field $W$. 
\begin{defn}
For each $\underline{\gamma}=(\gamma_{1},\cdots,\gamma_{n})\in\mathcal{J}_{\Gamma}$,
let $\mathbb{F}_{\underline{\gamma}}$ be a nonlinear expectation
on $(\mathbb{R}^{n\times m},C_{l.Lip}(\mathbb{R}^{n\times m}))$.
We call $(\mathbb{F}_{\underline{\gamma}})_{\underline{\gamma}\in\mathcal{J}_{\Gamma}}$
a system of finite dimensional distributions on $\mathbb{R}^{m}$
with respect to $\Gamma$.

Let $(W_{\gamma})_{\gamma\in\Gamma}$ be an $m$-dimensional random
field defined on a nonlinear expectation space $(\Omega,\mathcal{H},\mathbb{\hat{E}})$.
For each $\underline{\gamma}=(\gamma_{1},\cdots,\gamma_{n})\in\mathcal{J}_{\Gamma}$
and the corresponding ramdom variable $W_{\underline{\gamma}}=(W_{\gamma_{1}},\cdots,W_{\gamma_{n}})$,
we define a functional on $C_{l.Lip}(\mathbb{R}^{n\times m})$ by
\[
\mathbb{F}_{W_{\underline{\gamma}}}[\varphi]=\hat{\mathbb{E}}[\varphi(W_{\underline{\gamma}})],\,\,\,\varphi\in C_{l.Lip}(\mathbb{R}^{n\times m}).
\]
Then the triple $(\mathbb{R}^{m\times n},C_{l.Lip}(\mathbb{R}^{m\times n}),\mathbb{F}_{W_{\underline{\gamma}}})$
constitute a nonlinear expectation space. We call $(\mathbb{F}_{W_{\underline{\gamma}}})_{\underline{\gamma}\in\mathcal{J}_{\Gamma}}$
the family of finite dimensional distributions of $(W_{\gamma})_{\gamma\in\Gamma}$.

Let $(W_{\gamma}^{(1)})_{\gamma\in\Gamma}$ and $({W}_{\gamma}^{(2)})_{\gamma\in\Gamma}$
be two $m$-dimensional random fields defined on nonlinear expectation
spaces $(\Omega_{1},\mathcal{H}_{1},\mathbb{\hat{E}}_{1})$ and $(\Omega_{2},\mathcal{H}_{2},\mathbb{\hat{E}}_{2})$,
respectively. They are said to be identically distributed, denoted
by $(W_{\gamma}^{(1)})_{\gamma\in\Gamma}\overset{d}{=}({W}_{\gamma}^{(2)})_{\gamma\in\Gamma}$,
or simply $W^{(1)}\overset{d}{=}W^{(2)}$, if for each $\underline{\gamma}=(\gamma_{1},\cdots,\gamma_{n})\in\mathcal{J}_{\Gamma}$,
$W_{\underline{\gamma}}^{(1)}\overset{d}{=}{W}_{\underline{\gamma}}^{(2)}$,
i.e., 
\[
\mathbb{\hat{E}}_{1}[\varphi(W_{\underline{\gamma}}^{(1)})]=\mathbb{\hat{E}}_{2}[\varphi({W}_{\underline{\gamma}}^{(2)})]\text{ for each }\varphi\in C_{l.Lip}(\mathbb{R}^{n\times m}).
\]

\end{defn}
For any given $m$-dimensional random field $W=(W_{\gamma})_{\gamma\in\Gamma}$,
the family of its finite dimensional distributions satisfies the following
properties of consistency: 
\begin{description}
\item [{{(1)} Compatibility}] For each $(\gamma_{1},\cdots,\gamma_{n},\gamma_{n+1})\in\mathcal{J}_{\Gamma}$
and $\varphi\in C_{l.Lip}(\mathbb{R}^{n\times m}),$ 
\begin{equation}
\mathbb{F}_{W_{\gamma_{1}},\cdots,W_{\gamma_{n}}}[\varphi]=\mathbb{F}_{W_{\gamma_{1}},\cdots,W_{\gamma_{n}},W_{\gamma_{n+1}}}[\widetilde{\varphi}],\label{F consistent 1}
\end{equation}
where the function $\widetilde{\varphi}$ is a function on $\mathbb{R}^{m\times(n+1)}$
defined by $\widetilde{\varphi}(y_{1},\cdots,y_{n},y_{n+1})=\varphi(y_{1},\cdots,y_{n})$,
for any $y_{1},\cdots,y_{n},y_{n+1}\in\mathbb{R}^{m}$; 
\item [{{(2)} Symmetry}] For each $(\gamma_{1},\cdots,\gamma_{n})\in\mathcal{J}_{\Gamma}$,
$\varphi\in C_{l.Lip}(\mathbb{R}^{n\times m})$ and each permutation
$\pi$ of $\{1,\cdots,n\}$, 
\begin{equation}
\mathbb{F}_{W_{\gamma_{\pi(1)}},\cdots,W_{\gamma_{\pi(n)}}}[\varphi]=\mathbb{F}_{W_{\gamma_{1}},\cdots,W_{\gamma_{n}}}[\varphi_{\sigma}],\label{F consistent 2}
\end{equation}
where we denote $\varphi_{\pi}(y_{1},\cdots,y_{n})=\varphi(y_{\pi(1)},\cdots,y_{\pi(n)})$. 
\end{description}
According to the family of finite dimensional distributions, we can
generalize the classical Kolmogorov's existence theorem to the situation
of a sublinear expectation space, which is a variant of Theorem 3.8
in Peng \cite{Peng2011}. 
\begin{thm}
\label{Kolmogorov existence} Let $\{\mathbb{F}_{\underline{\gamma}},\underline{\gamma}\in\mathcal{J}_{\Gamma}\}$
be a family of finite dimensional distributions on $\mathbb{R}^{m}$ satisfying the compatibility
condition (\ref{F consistent 1}) and the symmetry condition (\ref{F consistent 2}).
Then there exists an $m$-dimensional random field $W=(W_{\gamma})_{\gamma\in\Gamma}$
defined on a nonlinear expectation space $(\Omega,\mathcal{H},\hat{\mathbb{E}})$
whose family of finite dimensional distributions coincides with $\{\mathbb{F}_{\underline{\gamma}},\underline{\gamma}\in\mathcal{J}_{\Gamma}\}$.
If we assume moreover that each $\mathbb{F}_{\underline{\gamma}}$
in $\{\mathbb{F}_{\underline{\gamma}},\underline{\gamma}\in\mathcal{J}_{\Gamma}\}$
is sublinear, then the corresponding expectation $\hat{\mathbb{E}}$
on the space of random variables $(\Omega,\mathcal{H})$ is also sublinear. \end{thm}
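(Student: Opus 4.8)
The plan is to mimic the classical Kolmogorov construction, using the canonical path-space realization but replacing the Wiener measure by the directly-defined nonlinear expectation. The natural choice is $\Omega = (\mathbb{R}^m)^\Gamma$, the space of all functions $\omega\colon\Gamma\to\mathbb{R}^m$, with the canonical coordinate field $W_\gamma(\omega):=\omega(\gamma)$. For the space of random variables I would take
\[
\mathcal{H}=\{\varphi(W_{\gamma_1},\cdots,W_{\gamma_n}): n\in\mathbb{N},\ \underline\gamma=(\gamma_1,\cdots,\gamma_n)\in\mathcal{J}_\Gamma,\ \varphi\in C_{l.Lip}(\mathbb{R}^{n\times m})\},
\]
the cylinder functions depending on finitely many coordinates. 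One checks immediately that $\mathcal{H}$ is closed under the $C_{l.Lip}$-operations required of a space of random variables (a composition of cylinder functions is again a cylinder function on the union of the index sets).

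Next I would \emph{define} the functional $\hat{\mathbb{E}}$ on $\mathcal{H}$ by
\[
\hat{\mathbb{E}}[\varphi(W_{\gamma_1},\cdots,W_{\gamma_n})]:=\mathbb{F}_{(\gamma_1,\cdots,\gamma_n)}[\varphi].
\]
The crucial point is that this is \textbf{well-defined}: a single $X\in\mathcal{H}$ admits many cylinder representations (one may enlarge the index tuple or permute its entries), and I must verify that all of them assign the same value. This is exactly where the two consistency hypotheses enter. The compatibility condition (\ref{F consistent 1}) handles the case of adding redundant coordinates, and the symmetry condition (\ref{F consistent 2}) handles reordering; since any two representations of the same cylinder function are related by finitely many such operations (pass to a common refinement containing both index sets, then reorder), well-definedness follows. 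I expect this verification to be the main obstacle, as it requires carefully tracking how $\varphi$ transforms under enlargement and permutation and confirming the two axioms cover every identification — it is the heart of the theorem, the nonlinear analogue of the projective-consistency check in the classical proof.

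Once $\hat{\mathbb{E}}$ is well-defined, the remaining properties are inherited pointwise from the $\mathbb{F}_{\underline\gamma}$. For monotonicity and constant preservation, given $X=\varphi(W_{\underline\gamma})\ge Y=\psi(W_{\underline\delta})$, I pass to a common index tuple $\underline\eta$ refining both (adjusting $\varphi,\psi$ via compatibility), so that $X$ and $Y$ are represented by functions of the same coordinates; then $\varphi\ge\psi$ as $C_{l.Lip}$ functions on $\mathbb{R}^{|\underline\eta|\times m}$, and monotonicity of the single nonlinear expectation $\mathbb{F}_{\underline\eta}$ gives $\hat{\mathbb{E}}[X]\ge\hat{\mathbb{E}}[Y]$; constant preservation is the case $n=1$ with $\varphi$ constant. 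Finally, to verify that the finite-dimensional distributions of the constructed field $W$ coincide with the prescribed $\mathbb{F}_{\underline\gamma}$, one simply reads off $\mathbb{F}_{W_{\underline\gamma}}[\varphi]=\hat{\mathbb{E}}[\varphi(W_{\underline\gamma})]=\mathbb{F}_{\underline\gamma}[\varphi]$ from the definition.

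For the last assertion, suppose every $\mathbb{F}_{\underline\gamma}$ is sublinear. I must show $\hat{\mathbb{E}}$ is subadditive and positively homogeneous. Given $X,Y\in\mathcal{H}$, again refine to a common index tuple $\underline\eta$ so that $X=\varphi(W_{\underline\eta})$ and $Y=\psi(W_{\underline\eta})$ with the \emph{same} coordinates; then $X+Y=(\varphi+\psi)(W_{\underline\eta})$ and $\lambda X=(\lambda\varphi)(W_{\underline\eta})$, so subadditivity and positive homogeneity of $\hat{\mathbb{E}}$ reduce directly to the corresponding properties of $\mathbb{F}_{\underline\eta}$. The ability to represent any finite collection of cylinder functions over one common index set is the technical device that makes all these verifications routine once well-definedness is secured.
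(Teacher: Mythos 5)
Your proposal is correct and follows essentially the same route as the paper: the canonical space $\Omega=(\mathbb{R}^m)^\Gamma$ with cylinder random variables, the expectation defined directly from the prescribed finite-dimensional distributions, well-definedness secured by the compatibility and symmetry conditions, and sublinearity obtained by lifting two cylinder functions to a common index tuple and invoking sub-additivity of the single distribution there. The only cosmetic difference is that the paper phrases the common-tuple step by assuming the two index sets are disjoint and extending both test functions to the concatenated tuple, which is the same device you describe as passing to a common refinement.
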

\begin{proof}
Denote by $\Omega=(\mathbb{R}^{m})^{\Gamma}$ the space of all functions
$\omega=(\omega_{\gamma})_{\gamma\in\Gamma}$ from $\Gamma$ to $\mathbb{R}^{m}$.
For each $\omega\in\Omega$, we denote by $W=(W_{\gamma}(\omega)=\omega_{\gamma})_{\gamma\in\Gamma}$
and call $W$ the canonical process defined on $\Omega$. The space
of random variables $\mathcal{H}$ is defined by 
\[
\mathcal{H}=L_{ip}(\Omega)=\{\varphi(W_{\gamma_{1}},\cdots,W_{\gamma_{n}}),\forall n\in\mathbb{N},\gamma_{1},\cdots,\gamma_{n}\in\Gamma,\varphi\in C_{l.Lip}(\mathbb{R}^{n\times m})\}.
\]
Then for each random variable $\xi\in\mathcal{H}$ of the form
$\xi=\varphi(W_{\gamma_{1}},\cdots,W_{\gamma_{n}})$, we define the
corresponding nonlinear expectation by 
\[
\hat{\mathbb{E}}[\xi]=\mathbb{F}_{\gamma_{1},\cdots,\gamma_{n}}[\varphi].
\]
Since $\mathbb{F}_{\underline{\gamma}}$, $\underline{\gamma}\in\mathcal{J}_{\Gamma}$
satisfies the consistency conditions (\ref{F consistent 1}) and (\ref{F consistent 2}),
thus the functional $\hat{\mathbb{E}}:\mathcal{H}\mapsto\mathbb{R}$
is consistently defined.

Since, for each $\underline{\gamma}\in{\mathcal{J}}_{\Gamma}$, the
distribution $\mathbb{F}_{\underline{\gamma}}$ is monotone and constant
preserving, hence the expectation $\hat{\mathbb{E}}[\cdot]$ satisfies
the same properties. Namely, $\hat{\mathbb{E}}[\cdot]$ is a nonlinear
expectation defined on $(\Omega,\mathcal{H})$. Obviously, the family
of finite dimensional distributions of $(W_{\gamma})_{\gamma\in\Gamma}$
is $(\mathbb{F}_{\underline{\gamma}})_{\underline{\gamma}\in\mathcal{J}_{\Gamma}}$. Moreover,
if for each ${\underline{\gamma}=(\gamma_{1},\cdots,\gamma_{n})\in\mathcal{J}_{\Gamma}}$,
$\mathbb{F}_{\underline{\gamma}}$ is a sublinear expectation defined
on $(\mathbb{R}^{m\times n},C_{l.Lip}(\mathbb{R}^{m\times n}))$,
then $\hat{\mathbb{E}}$ is a sublinear expectation. The proof of
the sub-additivity of $\hat{\mathbb{E}}[\cdot]$ is as follows: let
$\xi=\varphi(W_{\gamma_{1}},\cdots,W_{\gamma_{i}})$ and $\eta=\psi(W_{\iota_{1}},\cdots,W_{\iota_{j}})$
be two random variables in $\mathcal{H}$, with $\varphi\in C_{l.Lip}(\mathbb{R}^{m\times i})$
and $\psi\in C_{l.Lip}(\mathbb{R}^{m\times j})$. Without loss of
generality, assume that $\gamma_{h}\neq\iota_{k}$, for $h=1,\cdots,i$,
$k=1,\cdots,j$. Then, from the definition of $\hat{\mathbb{E}}$,
\begin{align*}
\hat{\mathbb{E}}[\xi+\eta] & =\mathbb{F}_{\gamma_{1},\cdots,\gamma_{i},\iota_{1},\cdots,\iota_{j}}[\widetilde{\varphi}+\widetilde{\psi}]\leq\mathbb{F}_{\gamma_{1},\cdots,\gamma_{i},\iota_{1},\cdots,\iota_{j}}[\widetilde{\varphi}]+\mathbb{F}_{\gamma_{1},\cdots,\gamma_{i},\iota_{1},\cdots,\iota_{j}}[\widetilde{\psi}]\\
 & =\mathbb{F}_{\gamma_{1},\cdots,\gamma_{i}}[{\varphi}]+\mathbb{F}_{\iota_{1},\cdots,\iota_{j}}[{\psi}]=\hat{\mathbb{E}}[\xi]+\hat{\mathbb{E}}[\eta],
\end{align*}
where we set 
\begin{align*}
&\widetilde{\varphi}(\gamma_{1},\cdots,\gamma_{i},\iota_{1},\cdots,\iota_{j})=\varphi(\gamma_{1},\cdots,\gamma_{i}),\\
&\widetilde{\psi}(\gamma_{1},\cdots,\gamma_{i},\iota_{1},\cdots,\iota_{j})=\psi(\iota_{1},\cdots,\iota_{j}).
\end{align*}
The positive homogeneity of $\hat{\mathbb{E}}$ is directly obtained
from the property of $\mathbb{F}_{\underline{\gamma}}$, ${\underline{\gamma}}\in\mathcal{J}_{\Gamma}$.
The proof is complete. 
\end{proof}

\subsection{Gaussian random fields under a sublinear expectation space}
\begin{defn}
Let $(W_{\gamma})_{\gamma\in\Gamma}$ be an $m$-dimensional random
field on a sublinear expectation space $(\Omega,\mathcal{H},\hat{\mathbb{E}})$.
$(W_{\gamma})_{\gamma\in\Gamma}$ is called an $m$-dimensional $G$-Gaussian
random field if for each $\underline{\gamma}=(\gamma_{1},\cdots,\gamma_{n})\in\mathcal{J}_{\Gamma}$,
the following $n\times m$-dimensional random vector 
\begin{align*}
W_{\underline{\gamma}}=(W_{\gamma_{1}},\cdots,W_{\gamma_{n}})&=(W_{\gamma_{1}}^{(1)},\cdots W_{\gamma_{1}}^{(m)},\cdots,W_{\gamma_{n}}^{(1)},\cdots,W_{\gamma_{n}}^{(m)}),\,\,\,W_{\gamma_{i}}^{(j)}\in\mathcal{H},\,\,\,\, \\  &i=1,\cdots,n,\,\,\,j=1,\cdots,m,
\end{align*}
is $G$-normally distributed. Namely, 
\[
aW_{\underline{\gamma}}+b\bar{W}_{\underline{\gamma}}\overset{d}{=}\sqrt{a^{2}+b^{2}}W_{\underline{\gamma}},\,\,\,\text{ for any }\,\,a,b\geq0,
\]
where $\bar{W}_{\underline{\gamma}}$ is an independent copy of $W_{\underline{\gamma}}$. 
\end{defn}
For each $\underline{\gamma}=(\gamma_{1},\cdots,\gamma_{n})\in\mathcal{J}_{\Gamma}$,
we define 
\[
G_{W_{\underline{\gamma}}}(Q)=\frac{1}{2}\hat{\mathbb{E}}[\langle QW_{\underline{\gamma}},W_{\underline{\gamma}}\rangle],\,\,\,\,Q\in\mathbb{S}(n\times m),
\]
where $\mathbb{S}(n\times m)$ denotes the collection of all $(n\times m)\times(n\times m)$
symmetric matrices. Then $(G_{W_{\underline{\gamma}}})_{\underline{\gamma}\in\mathcal{J}_{\Gamma}}$
constitutes a family of functions: 
\[
G_{W_{\underline{\gamma}}}:\mathbb{S}(n\times m)\mapsto\mathbb{R},\,\,\,\,\underline{\gamma}=(\gamma_{1},\cdots,\gamma_{n}),\,\,\gamma_{i}\in\Gamma,\,\,\,n\in\mathbb{N},
\]
which satisfies the sublinear and monotone properties in the sense
of (\ref{G properties}).

According to Proposition \ref{G-normal existence}, the corresponding
distribution $\mathbb{F}_{W_{\underline{\gamma}}}$ of this $G$-normally
distributed random vector $W_{\underline{\gamma}}$ is uniquely determined
by the function $G_{W_{\underline{\gamma}}}$. Clearly, $\{G_{W_{\underline{\gamma}}}\}_{\underline{\gamma}\in\mathcal{J}_{\Gamma}}$
is a family of monotone sublinear and continuous functions satisfying
the properties of consistency in the following sense: 
\begin{description}
\item [{{(1)} Compatibility}] For any $(\gamma_{1},\cdots,\gamma_{n},\gamma_{n+1})\in\mathcal{J}_{\Gamma}$
and $Q=(q_{ij})_{i,j=1}^{n\times m}\in\mathbb{S}(n\times m)$ 
\begin{equation}
G_{W{\gamma_{1}},\cdots,W_{\gamma_{n}},W_{\gamma_{n+1}}}(\bar{Q})=G_{W{\gamma_{1}},\cdots,W_{\gamma_{n}}}({Q}),\label{G consistent 1}
\end{equation}
where $\bar{Q}=\left(\begin{array}{cc}
Q & 0\\
0 & 0
\end{array}\right)\in\mathbb{S}((n+1)\times m)$; 
\item [{{(2)} Symmetry}] For any permutation $\pi$ of $\{1,\cdots,n\}$,
\begin{equation}
G_{W_{\gamma_{\pi(1)}},\cdots,W_{\gamma_{{\pi}(n)}}}(Q)=G_{W_{\gamma_{1}},\cdots,W_{\gamma_{n}}}({\pi}^{-1}(Q)),\label{G consistent 2}
\end{equation}
where the mapping ${\pi}^{-1}:{\mathbb{S}}(n\times m)\mapsto\mathbb{S}(n\times m)$
is defined by 
\[
\left({\pi}^{-1}(Q)\right)_{ij}=(q_{\pi^{-1}(i)\pi^{-1}(j)}),\,\,\,i,j=1,\cdots,(n\times m),\,\,\,Q=(q_{ij})_{i,j=1}^{n\times m}\in\mathbb{S}(n\times m).
\]

\end{description}
A very inttersting inverse problem is how to construct a $G$-Gaussian
field in a sublinear expectation space if a above type of family of
sublinear functions $(G_{\underline{\gamma}})_{\underline{\gamma}\in\mathcal{J}_{\Gamma}}$
is given. 
\begin{thm}
\label{existence of G.R.F.} Let $(G_{\underline{\gamma}})_{\underline{\gamma}\in\mathcal{J}_{\Gamma}}$
be a family of real valued functions such that, for each $\underline{\gamma}=(\gamma_{1},\cdots,\gamma_{n})\in\mathcal{J}_{\Gamma}$,
the real function $G_{\underline{\gamma}}$ is defined on $\mathbb{S}(n\times m)\mapsto\mathbb{R}$
and satisfies the same monotone and sublinear property of type (\ref{G properties}).
Moreover, this family $(G_{\underline{\gamma}})_{\underline{\gamma}\in\mathcal{J}_{\Gamma}}$
satisfies the same compatibility condition (\ref{G consistent 1})
and symmetry condition (\ref{G consistent 2}). Then there exists
an $m$-dimensional $G$-Gaussian random field $(W_{\gamma})_{\gamma\in\Gamma}$
on a sublinear expectation space $(\Omega,\mathcal{H},\hat{\mathbb{E}})$
such that for each $\underline{\gamma}=(\gamma_{1},\cdots,\gamma_{n})\in\mathcal{J}_{\Gamma}$,
$W_{\underline{\gamma}}=(W_{\gamma_{1}},\cdots,W_{\gamma_{n}})$ is
$G$-normally distributed, i.e. 
\[
G_{W_{\underline{\gamma}}}(Q)=\frac{1}{2}\hat{\mathbb{E}}[\langle QW_{\underline{\gamma}},W_{\underline{\gamma}}\rangle]=G_{\underline{\gamma}}(Q),\,\,\,\text{ for any }Q\in\mathbb{S}(n\times m).
\]
Furthermore, if there exists another Gaussian random field $({\bar{W}}_{\gamma})_{\gamma\in\Gamma}$,
with the same index set $\Gamma$, defined on a sublinear expectation
space $(\bar{\Omega},\bar{\mathcal{H}},\bar{\mathbb{E}})$ such that
for each $\underline{\gamma}\in\mathcal{J}_{\Gamma}$, $\bar{W}_{\underline{\gamma}}$
is $G$-normally distributed with the same generating function, namely,
\[
\frac{1}{2}\bar{\mathbb{E}}[\langle Q\bar{W}_{\underline{\gamma}},\bar{W}_{\underline{\gamma}}\rangle]=G_{\underline{\gamma}}(Q)\,\,\,\text{ for any }Q\in\mathbb{S}(n\times m).
\]
Then we have $W\overset{d}{=}\bar{W}$. \end{thm}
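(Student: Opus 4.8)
The plan is to reduce the theorem to the two results already at our disposal: Proposition~\ref{G-normal existence}, which produces and uniquely characterizes a $G$-normal law from its generating function, and Theorem~\ref{Kolmogorov existence}, the generalized Kolmogorov extension theorem for consistent families of finite dimensional distributions. The bridge between them is the fact that a $G$-normal distribution is completely pinned down by its generating function, so the consistency conditions we are handed at the level of the functions $G_{\underline{\gamma}}$ must transfer to the level of the associated distributions.

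First I would manufacture, for each $\underline{\gamma}=(\gamma_1,\cdots,\gamma_n)\in\mathcal{J}_\Gamma$, a $G$-normal distribution. Since $G_{\underline{\gamma}}:\mathbb{S}(n\times m)\to\mathbb{R}$ is sublinear, monotone and continuous, Proposition~\ref{G-normal existence} yields a $G$-normally distributed random vector on some sublinear expectation space whose generating function is exactly $G_{\underline{\gamma}}$; let $\mathbb{F}_{\underline{\gamma}}$ denote its distribution on $(\mathbb{R}^{n\times m},C_{l.Lip}(\mathbb{R}^{n\times m}))$. Each such $\mathbb{F}_{\underline{\gamma}}$ is a sublinear expectation, and by the uniqueness clause of Proposition~\ref{G-normal existence} it is the \emph{only} $G$-normal law with that generating function.

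The central step is to check that $(\mathbb{F}_{\underline{\gamma}})_{\underline{\gamma}\in\mathcal{J}_\Gamma}$ satisfies the compatibility (\ref{F consistent 1}) and symmetry (\ref{F consistent 2}) conditions of Theorem~\ref{Kolmogorov existence}. For compatibility, the image of a $G$-normal vector under a linear map is again $G$-normal (by the proposition immediately following Proposition~\ref{G heat equation}); applying this to the coordinate projection $K$ that forgets the last block shows the marginal of $\mathbb{F}_{\gamma_1,\cdots,\gamma_{n+1}}$ onto the first $n\times m$ coordinates is $G$-normal, with generating function $Q\mapsto\tfrac{1}{2}\hat{\mathbb{E}}[\langle K^{T}QK\,\xi,\xi\rangle]=G_{\gamma_1,\cdots,\gamma_{n+1}}(\bar{Q})$, where $\bar Q$ is the degenerate block matrix from (\ref{G consistent 1}). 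By the compatibility hypothesis (\ref{G consistent 1}) this equals $G_{\gamma_1,\cdots,\gamma_n}(Q)$, so the two $G$-normal laws share a generating function and, by uniqueness, coincide; this is precisely (\ref{F consistent 1}). For symmetry, a permutation of the index blocks is again a linear rearrangement, and (\ref{G consistent 2}) guarantees the permuted generating function matches, so uniqueness again forces (\ref{F consistent 2}). With both conditions verified, Theorem~\ref{Kolmogorov existence} delivers a random field $(W_\gamma)_{\gamma\in\Gamma}$ on a space $(\Omega,\mathcal{H},\hat{\mathbb{E}})$ realizing these finite dimensional distributions; since each $\mathbb{F}_{\underline{\gamma}}$ is sublinear the resulting $\hat{\mathbb{E}}$ is sublinear, and since each $W_{\underline{\gamma}}$ is $G$-normal the field is $G$-Gaussian with $G_{W_{\underline{\gamma}}}=G_{\underline{\gamma}}$, as required.

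Finally, uniqueness in distribution is immediate: if $(\bar{W}_\gamma)_{\gamma\in\Gamma}$ is another such field, then for every $\underline{\gamma}$ both $W_{\underline{\gamma}}$ and $\bar{W}_{\underline{\gamma}}$ are $G$-normal with the common generating function $G_{\underline{\gamma}}$, whence $W_{\underline{\gamma}}\overset{d}{=}\bar{W}_{\underline{\gamma}}$ by Proposition~\ref{G-normal existence}; as this holds for all finite index sets, $W\overset{d}{=}\bar{W}$ by definition. I expect the only genuine obstacle to be the compatibility verification, where one must be careful that marginalizing the $G$-normal law really corresponds to evaluating $G_{\gamma_1,\cdots,\gamma_{n+1}}$ on the degenerate block matrix $\bar{Q}$ via the $K^{T}QK$ formula. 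The symmetry check and the uniqueness argument are then routine consequences of the same linear-image principle together with the uniqueness clause of Proposition~\ref{G-normal existence}.
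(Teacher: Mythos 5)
Your proposal is correct and follows essentially the same route as the paper: realize each finite-dimensional law $\mathbb{F}_{\underline{\gamma}}$ via Proposition~\ref{G-normal existence}, transfer the consistency conditions from the generating functions to the distributions, invoke Theorem~\ref{Kolmogorov existence}, and conclude uniqueness in distribution from the uniqueness clause of Proposition~\ref{G-normal existence}. The only difference is that where the paper merely asserts that consistency of $(G_{\underline{\gamma}})_{\underline{\gamma}\in\mathcal{J}_{\Gamma}}$ implies consistency of the associated $G$-normal laws, you supply the actual argument --- marginalization as a linear image $K\xi$ whose generating function is $Q\mapsto\frac{1}{2}\hat{\mathbb{E}}[\langle K^{T}QK\xi,\xi\rangle]=G_{\gamma_{1},\cdots,\gamma_{n+1}}(\bar{Q})$, matched to $G_{\gamma_{1},\cdots,\gamma_{n}}(Q)$ and pinned down by uniqueness --- which is a correct and worthwhile elaboration of a step the paper leaves implicit.
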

\begin{proof}
For each $\underline{\gamma}={(\gamma_{1},\cdots,\gamma_{n})}\in{\mathcal{J}}_{\Gamma}$,
we denote by $\mathbb{F}_{W_{\underline{\gamma}}}$ the sublinear
distribution uniquely determined by $G_{\underline{\gamma}}$. Since
the family of monotone sublinear functions $G_{W_{\underline{\gamma}}}$
satisfies the conditions of the consistency (\ref{G consistent 1}),
(\ref{G consistent 2}). It follows that the family of $G$-normal
distributions $\mathbb{F}_{W_{\underline{\gamma}}}[\cdot]$ satisfies
the conditions of consistency (\ref{F consistent 1}), (\ref{F consistent 2}).
Thus we can follow the construction procedure in the proof of Theorem~\ref{Kolmogorov existence}
to construct a random field $W=(W_{\gamma})_{\gamma\in\Gamma}$ on
a complete sublinear expectation space $(\Omega,\mathcal{H},\hat{\mathbb{E}})$,
under which the family of finite dimensional distributions coincides
with $\{\mathbb{F}_{W_{\underline{\gamma}}}\}_{\underline{\gamma}\in\mathcal{J}_{\Gamma}}$.
Obviously, $W$ is a Gaussian random field under $\hat{\mathbb{E}}$.

Assume that there exists another $G$-Gaussian random field $\bar{W}$
with the same family of generating functions. Then $W$ and $\bar{W}$
have the same finite-dimensional distributions, which implies $W\overset{d}{=}\bar{W}$.
The proof is complete. 
\end{proof}
For each $p\geq1$, let $\mathbb{L}_{G}^{p}(W)$ be the completion
of $\mathcal{H}$ under the Banach norm $\|\cdot\|_{\mathbb{L}_{G}^{p}}:=\left(\hat{\mathbb{E}}[|\cdot|^{p}]\right)^{1/p}$.
Then the sublinear expectation $\hat{\mathbb{E}}$ can be extended
continuously to $\mathbb{L}_{G}^{p}(W)$ and $(\Omega,\mathbb{L}_{G}^{p}(W),\hat{\mathbb{E}})$
forms a complete sublinear expectation space. 
\begin{rem}
If $\Gamma=\mathbb{R}^{+}$, $W=(W_{\gamma})_{\gamma\in\Gamma}$ becomes
a $G$-Gaussian process which has been studied in Peng \cite{Peng2011}. 
\end{rem}

\section{Spatial white noise under sublinear expectations}

In this section, we formulate a spatial  white noise and then develop
the corresponding stochastic integral. 
\begin{defn}
\label{Gwhitenoise} Let $(\Omega,\mathcal{H},\hat{\mathbb{E}})$
be a sublinear expectation space and $\Gamma=\mathcal{B}_{0}(\mathbb{R}^{d})=\{A\in\mathcal{B}(\mathbb{R}^{d}),\lambda_{A}<\infty\}$,
where $\lambda_{A}$ denotes the Lebesgue measure of $A\in\mathcal{B}(\mathbb{R}^{d})$.
A $1$-dimensional G-Gaussian random field $\mathbb{W}=\{\mathbb{W}_{A},{A\in\Gamma}\}$
is called a $1$-dimensional $G$-white noise if 
\begin{itemize}
 \item[ (i) ] $\hat{\mathbb{E}}[\mathbb{W}^2_{A}]=\overline{\sigma}^2\lambda_A, -\hat{\mathbb{E}}[-\mathbb{W}^2_{A}]=\underline{\sigma}^2\lambda_A$,
for all $A\in\Gamma$; 
\item[ (ii) ] For each $A_{1},A_{2}\in\Gamma$, $A_{1}\cap A_{2}=\emptyset$, we
have 
\begin{align}
 & \hat{\mathbb{E}}[\mathbb{W}_{A_{1}}\mathbb{W}_{A_{2}}]=\hat{\mathbb{E}}[-\mathbb{W}_{A_{1}}\mathbb{W}_{A_{2}}]=0,\label{Gwn1}\\
 & \hat{\mathbb{E}}[(\mathbb{W}_{A_{1}\cup A_{2}}-\mathbb{W}_{A_{1}}-\mathbb{W}_{A_{2}})^{2}]=0,\label{Gwn2}
\end{align}
\end{itemize}
where $0\leq\underline{\sigma}^2\leq\overline{\sigma}^2$ are any given numbers.
\end{defn}

\begin{rem}
The definition implies that the distribution of a $G$-white noise
has only two parameters $\overline{\sigma}^{2}$ and $\underline{\sigma}^{2}$.
In fact we can also set $\overline{\sigma}^{2}=1$, $\underline{\sigma}^{2}\leq1$
to make it as a 1-parameter field. On the other hand, for any given
pair of numbers $0\leq\underline{\sigma}^{2}\leq\overline{\sigma}^{2}$,
one can construct a $1$-dimensional random field $\mathbb{W}=\{\mathbb{W}_{A},{A\in\Gamma}\}$
with $\Gamma=\mathcal{B}_{0}(\mathbb{R}^{d})$ defined on a sublinear
expectation space $(\Omega,\mathcal{H},\hat{\mathbb{E}})$ satisfying
conditions (i)-(ii). 
\end{rem}
We denote by $\mathbb{L}_{G}^{2}(\mathbb{W})$ the completion of $\mathcal{H}$
under the Banach norm $\|\cdot\|_{\mathbb{L}_{G}^{2}}:=\left(\hat{\mathbb{E}}[|\cdot|^{2}]\right)^{1/2}$.
Then the sublinear expectation $\hat{\mathbb{E}}$ can be extended
continuously to $\mathbb{L}_{G}^{2}(\mathbb{W})$ and $(\Omega,\mathbb{L}_{G}^{2}(\mathbb{W}),\hat{\mathbb{E}})$
forms a complete sublinear expectation space.

\subsection{Existence of $G$-white noise}

In the following, we turn to construct a spatial $G$-white noise
on some sublinear expectation space. According to Theorem \ref{existence of G.R.F.},
it is sufficient to define an appropriate family of sublinear and
monotone functions $(G_{\underline{\gamma}})_{\underline{\gamma}\in\mathcal{J}_{\Gamma}}$ such that the $G$-Gaussian random field generated
by $(G_{\underline{\gamma}})_{\underline{\gamma}\in\mathcal{J}_{\Gamma}}$
satisfies conditions (i)-(ii) in Definition \ref{Gwhitenoise} for
$G$-white noise.

For each $\underline{\gamma}=(A_{1},\cdots,A_{n})$, $A_{j}\in\Gamma=\mathcal{B}_{0}(\mathbb{R}^{d})$,
we define a mapping $G_{\underline{\gamma}}(\cdot):\mathbb{S}(n)\mapsto\mathbb{R}$
as follows: 
\begin{eqnarray}
G_{A_{1},\cdots,A_{n}}(Q)=G(\sum_{i,j=1}^{n}q_{ij}\lambda_{A_{i}\cap A_{j}}),\ Q=(q_{ij})_{i,j=1}^n\in\mathbb{S}(n).\label{g-q}
\end{eqnarray}
where $G(a)=\frac{1}{2}\overline{\sigma}^{2}a^{+}-\frac{1}{2}\underline{\sigma}^{2}a^{-}$
for $a\in\mathbb{R}$. Here $\overline{\sigma}^{2}\geq\underline{\sigma}^{2}$
are two given nonnegative parameters. 
\begin{prop}
\label{G-w.n.} For each $\underline{\gamma}=(A_{1},\cdots,A_{n})$,
$A_{j}\in\Gamma=\mathcal{B}_{0}(\mathbb{R}^{d})$, the function $G_{\underline{\gamma}}(\cdot)$
defined as (\ref{g-q}) is a sublinear and monotone real function
defined on $\mathbb{S}(n)$, namely it satisfies the condition~(\ref{G properties}).
Moreover, the family $\{G_{\underline{\gamma}}\}_{\underline{\gamma}\in\mathcal{J}_{\Gamma}}$
satisfies the conditions of consistency given in (\ref{G consistent 1})
and (\ref{G consistent 2}). \end{prop}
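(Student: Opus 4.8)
The plan is to recognize $G_{\underline{\gamma}}$ as the composition of the scalar function $G$ with a linear, monotone map on $\mathbb{S}(n)$, so that all the required properties are inherited from those of $G$. First I would record that
\[
G(a)=\frac{1}{2}\overline{\sigma}^{2}a^{+}-\frac{1}{2}\underline{\sigma}^{2}a^{-}=\frac{1}{2}\sup_{\sigma^{2}\in[\underline{\sigma}^{2},\overline{\sigma}^{2}]}\sigma^{2}a,
\]
which exhibits $G$ as a supremum of nondecreasing linear functions (all slopes nonnegative, since $\underline{\sigma}^{2}\geq0$). Hence $G$ is subadditive, positively homogeneous, and nondecreasing on $\mathbb{R}$.

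Next, writing $f(x)=(\mathbf{1}_{A_{1}}(x),\cdots,\mathbf{1}_{A_{n}}(x))^{T}$, I would use $\lambda_{A_{i}\cap A_{j}}=\int\mathbf{1}_{A_{i}}\mathbf{1}_{A_{j}}\,dx$ to obtain the representation
\[
a_{\underline{\gamma}}(Q):=\sum_{i,j=1}^{n}q_{ij}\lambda_{A_{i}\cap A_{j}}=\int_{\mathbb{R}^{d}}\langle Qf(x),f(x)\rangle\,dx .
\]
This shows that $Q\mapsto a_{\underline{\gamma}}(Q)$ is linear and that the matrix $(\lambda_{A_{i}\cap A_{j}})$ is a Gram matrix, hence symmetric and positive semidefinite. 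The crucial step is the monotonicity of $a_{\underline{\gamma}}$: if $Q\geq\bar{Q}$, then the integrand $\langle(Q-\bar{Q})f(x),f(x)\rangle$ is nonnegative for every $x$, so $a_{\underline{\gamma}}(Q)\geq a_{\underline{\gamma}}(\bar{Q})$. Since $G_{\underline{\gamma}}=G\circ a_{\underline{\gamma}}$, the three conditions in (\ref{G properties}) follow immediately: subadditivity from the subadditivity of $G$ together with the linearity of $a_{\underline{\gamma}}$; positive homogeneity from that of $G$ and $a_{\underline{\gamma}}$; and monotonicity from $G$ being nondecreasing composed with the monotone $a_{\underline{\gamma}}$.

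The two consistency conditions are then bookkeeping. For compatibility (\ref{G consistent 1}), the matrix $\bar{Q}$ has the block form given in the statement, so its only possibly nonzero entries are $q_{ij}$ with $i,j\leq n$; thus $a_{A_{1},\cdots,A_{n},A_{n+1}}(\bar{Q})=a_{A_{1},\cdots,A_{n}}(Q)$, and applying $G$ gives (\ref{G consistent 1}). For symmetry (\ref{G consistent 2}), the substitution $k=\pi^{-1}(i)$, $l=\pi^{-1}(j)$ yields
\[
\sum_{i,j=1}^{n}q_{\pi^{-1}(i)\pi^{-1}(j)}\lambda_{A_{i}\cap A_{j}}=\sum_{k,l=1}^{n}q_{kl}\lambda_{A_{\pi(k)}\cap A_{\pi(l)}},
\]
i.e. $a_{A_{1},\cdots,A_{n}}(\pi^{-1}(Q))=a_{A_{\pi(1)},\cdots,A_{\pi(n)}}(Q)$, whence (\ref{G consistent 2}) after applying $G$. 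I expect the only genuine content to be the monotonicity step, namely the positive semidefiniteness of the Gram matrix $(\lambda_{A_{i}\cap A_{j}})$ (equivalently, the pointwise nonnegativity of $\langle(Q-\bar{Q})f(x),f(x)\rangle$); everything else reduces to the scalar properties of $G$ and elementary reindexing.
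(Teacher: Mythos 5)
Your proof is correct and follows essentially the same route as the paper: both recognize $G_{\underline{\gamma}}$ as the scalar sublinear function $G$ composed with the linear, monotone map $Q\mapsto\sum_{i,j=1}^{n}q_{ij}\lambda_{A_{i}\cap A_{j}}$, and both verify compatibility and symmetry by direct reindexing of the entries of $Q$. The only difference is one of completeness in your favor: you actually prove the key positivity fact (the paper's display $\sum_{i,j=1}^{n}q_{ij}\lambda_{A_{i}\cap A_{j}}\geq 0$ for $Q\geq0$, asserted there without justification) via the Gram representation $\int_{\mathbb{R}^{d}}\langle Qf(x),f(x)\rangle\,dx$ with $f=(\mathbf{1}_{A_{1}},\cdots,\mathbf{1}_{A_{n}})^{T}$.
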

\begin{proof}
The property that $G_{\underline{\gamma}}$ satisfies relation (\ref{G properties})
follows from  the monotone and sublinear properties of the function
$G=G(a)$, $a\in\mathbb{R}$, and the property that for each $A_{1},\cdots,A_{n}\in\mathcal{B}_{0}(\mathbb{R}^{d})$,
\begin{equation}\label{nonnegative}
\sum_{i,j=1}^{n}q_{ij}\lambda_{A_{i}\cap A_{j}}\geq0,\,\,\,\,\textrm{ if }Q\geq0. 
\end{equation}

The compatibility of the type (\ref{G consistent 1}) is checked as
follows: let 
\[
\bar{Q}=\left(\begin{array}{cc}
Q & 0\\
0 & 0
\end{array}\right)\in\mathbb{S}(n+1),
\]
and let $A_{1},\cdots,A_{n},A_{n+1}\in\mathcal{B}_{0}(\mathbb{R}^{d})$.
Noting that $q_{i(n+1)}=q_{(n+1)i}=0$, $i=1,\cdots,n+1$, by (\ref{g-q}), 
 we have
\begin{align*}
G_{A_{1},\cdots,A_{n+1}}(\bar{Q})=G_{A_{1},\cdots,A_{n}}(Q),
\end{align*}
which is (\ref{G consistent 1}). Let $\pi(\cdot)$ be a permutation
of $\{1,2,\cdots,n\}$. By (\ref{g-q}), we can check that 
\[
G_{A_{1},\cdots,A_{n}}(Q)=G_{A_{\pi(1)},\cdots,A_{\pi(1)}}(\pi(Q)),
\]
where $\pi(Q)=(q_{\pi(i)\pi(j)})_{i,j=1}^{n}$.
Thus (\ref{G consistent 2}) holds. The proof is complete. 
\end{proof}
Now we present the existence of white noise under the sublinear expectation. 
\begin{thm}
\label{existence of G.W.N.} For each given sublinear and monotone
function 
\[
G(a)=\frac{1}{2}({\overline{\sigma}}^{2}a^{+}-{\underline{\sigma}}^{2}a^{-}),a\in\mathbb{R},
\]
let the family of generating functions $\{G_{\underline{\gamma}}(\cdot)$:
$\underline{\gamma}=(A_{1},\cdots,A_{n})\in\mathcal{J}_{\Gamma}\}$
be defined as in (\ref{g-q}). Then there exists a $1$-dimensional
$G$-Gaussian random field $(\mathbb{W}_{\gamma})_{\gamma\in\Gamma}$
on a complete sublinear expectation space $(\Omega,\mathbb{L}_{G}^{2}(\mathbb{W}),\hat{\mathbb{E}})$
such that, for each $\underline{\gamma}=(A_{1},\cdots,A_{n})\in\mathcal{J}_{\Gamma}$,
$W_{\underline{\gamma}}=(\mathbb{W}_{A_{1}},\cdots,\mathbb{W}_{A_{n}})$
is $G$-normally distributed, i.e.,
\[
G_{\mathbb{W}_{\underline{\gamma}}}(Q)=\frac{1}{2}\hat{\mathbb{E}}[\langle Q\mathbb{W}_{\underline{\gamma}},\mathbb{W}_{\underline{\gamma}}\rangle]=G(\sum_{i,j=1}^{n}q_{ij}\lambda_{A_{i}\cap A_{j}}),\,\,\,\,\text{ for any }Q=(q_{ij})_{i,j=1}^{n}\in\mathbb{S}(n).
\]
Moreover, $(\mathbb{W}_{\gamma})_{\gamma\in\Gamma}$ is also a spatial
$G$-white noise under $(\Omega,\mathbb{L}_{G}^{2}(\mathbb{W}),\hat{\mathbb{E}})$,
namely, conditions (i) and (ii) of Definition~\ref{Gwhitenoise}
are satisfied. 
If $(\bar{\mathbb{W}}_{\gamma})_{\gamma\in\Gamma}$ is another $G$-white noise with the same sublinear function $G$, then 
its family of generating  function $\{G_{\bar{\mathbb{W}}_{\underline{\gamma}}}\}_{\gamma\in \mathcal{J}_\Gamma}$ coincides with the one defined in (\ref{g-q}).

\end{thm}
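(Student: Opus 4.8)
The plan is to apply the abstract existence result, Theorem~\ref{existence of G.R.F.}, to the specific family $(G_{\underline{\gamma}})_{\underline{\gamma}\in\mathcal{J}_{\Gamma}}$ defined in~(\ref{g-q}). Proposition~\ref{G-w.n.} has already verified that each $G_{\underline{\gamma}}$ is sublinear and monotone in the sense of~(\ref{G properties}) and that the whole family satisfies the compatibility~(\ref{G consistent 1}) and symmetry~(\ref{G consistent 2}) conditions. Therefore the hypotheses of Theorem~\ref{existence of G.R.F.} are met (with $m=1$), and it yields directly a $1$-dimensional $G$-Gaussian random field $(\mathbb{W}_{\gamma})_{\gamma\in\Gamma}$ on a sublinear expectation space whose generating functions are exactly the prescribed $G_{\underline{\gamma}}$, i.e.\ $\frac{1}{2}\hat{\mathbb{E}}[\langle Q\mathbb{W}_{\underline{\gamma}},\mathbb{W}_{\underline{\gamma}}\rangle]=G(\sum_{i,j=1}^{n}q_{ij}\lambda_{A_{i}\cap A_{j}})$. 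Passing to the completion $\mathbb{L}_{G}^{2}(\mathbb{W})$ as described just after Definition~\ref{Gwhitenoise} gives the complete sublinear expectation space in the statement.

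The substantive part is then to check that this field satisfies conditions (i) and (ii) of Definition~\ref{Gwhitenoise}, which is done by specializing the generating-function identity to well-chosen test matrices $Q$. For (i), I would take $n=1$ and $Q=(1)$, so that $\langle Q\mathbb{W}_{A},\mathbb{W}_{A}\rangle=\mathbb{W}_{A}^{2}$ and the identity reads $\frac{1}{2}\hat{\mathbb{E}}[\mathbb{W}_{A}^{2}]=G(\lambda_{A})=\frac{1}{2}\overline{\sigma}^{2}\lambda_{A}$ since $\lambda_{A}\geq0$; similarly $Q=(-1)$ gives $\frac{1}{2}\hat{\mathbb{E}}[-\mathbb{W}_{A}^{2}]=G(-\lambda_{A})=-\frac{1}{2}\underline{\sigma}^{2}\lambda_{A}$, which rearranges to $-\hat{\mathbb{E}}[-\mathbb{W}_{A}^{2}]=\underline{\sigma}^{2}\lambda_{A}$, establishing both equalities in (i).

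For (ii), take $n=2$ with disjoint $A_{1},A_{2}$, so $\lambda_{A_{1}\cap A_{2}}=0$. To obtain~(\ref{Gwn1}) I choose the off-diagonal matrix $Q=\left(\begin{smallmatrix}0 & 1\\ 1 & 0\end{smallmatrix}\right)$, giving $\langle Q\mathbb{W}_{\underline{\gamma}},\mathbb{W}_{\underline{\gamma}}\rangle=2\mathbb{W}_{A_{1}}\mathbb{W}_{A_{2}}$, so $\hat{\mathbb{E}}[\mathbb{W}_{A_{1}}\mathbb{W}_{A_{2}}]=G(2\lambda_{A_{1}\cap A_{2}})=G(0)=0$, and the negated matrix $-Q$ gives $\hat{\mathbb{E}}[-\mathbb{W}_{A_{1}}\mathbb{W}_{A_{2}}]=0$. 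For~(\ref{Gwn2}), the cleanest route is to compute $\hat{\mathbb{E}}[(\mathbb{W}_{A_{1}\cup A_{2}}-\mathbb{W}_{A_{1}}-\mathbb{W}_{A_{2}})^{2}]$ using the $3$-dimensional generating function on $\underline{\gamma}=(A_{1}\cup A_{2},A_{1},A_{2})$: writing the quadratic form as $\langle Q\mathbb{W}_{\underline{\gamma}},\mathbb{W}_{\underline{\gamma}}\rangle$ for the appropriate $Q$ built from the vector $(1,-1,-1)$, the identity reduces the expectation to $2G\big(\sum_{i,j}q_{ij}\lambda_{A_{i}\cap A_{j}}\big)$, and a short computation using $\lambda_{(A_{1}\cup A_{2})\cap A_{k}}=\lambda_{A_{k}}$, $\lambda_{A_{1}\cap A_{2}}=0$, and $\lambda_{A_{1}\cup A_{2}}=\lambda_{A_{1}}+\lambda_{A_{2}}$ shows that $\sum_{i,j}q_{ij}\lambda_{A_{i}\cap A_{j}}=0$, whence the whole expression vanishes; I would treat the $\underline{\gamma}$ entries as distinct indices (so $A_{1}\cup A_{2}$, $A_{1}$, $A_{2}$ are distinct sets), which is the harmless generic case. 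Finally, the uniqueness clause is immediate: any other $G$-white noise $(\bar{\mathbb{W}}_{\gamma})$ with the same $G$ satisfies (i)--(ii), and re-running the same specialization of test matrices $Q$ recovers $G_{\bar{\mathbb{W}}_{\underline{\gamma}}}(Q)=G(\sum_{i,j}q_{ij}\lambda_{A_{i}\cap A_{j}})$ on a dense spanning set of $\mathbb{S}(n)$, so by linearity and continuity of $G_{\bar{\mathbb{W}}_{\underline{\gamma}}}$ the generating functions coincide with~(\ref{g-q}). The main obstacle is purely bookkeeping: correctly encoding each scalar expression $\mathbb{W}_{A_{i}}\mathbb{W}_{A_{j}}$ and the squared increment as $\langle Q\mathbb{W}_{\underline{\gamma}},\mathbb{W}_{\underline{\gamma}}\rangle$ for the right symmetric $Q$, and verifying the combinatorial cancellation in the argument of $G$ for~(\ref{Gwn2}).
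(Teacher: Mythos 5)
Your treatment of existence and of conditions (i)--(ii) coincides with the paper's own proof: existence comes from Proposition~\ref{G-w.n.} together with Theorem~\ref{existence of G.R.F.}, condition (i) is read off from $Q=(\pm 1)$, relation (\ref{Gwn1}) from a purely off-diagonal $2\times 2$ matrix, and relation (\ref{Gwn2}) from the rank-one matrix $vv^{T}$ on the triple $(A_{1}\cup A_{2},A_{1},A_{2})$ with $v=(1,-1,-1)$ (the paper orders the triple as $(A_{1},A_{2},A_{1}\cup A_{2})$ and uses $v=(1,1,-1)$, which is the same computation). Your cancellation in the argument of $G$ is correct, and your remark about treating the three sets as distinct indices is harmless.

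The genuine gap is in the final uniqueness clause. You propose to recover $G_{\bar{\mathbb{W}}_{\underline{\gamma}}}$ from its values on the test matrices used above, ``by linearity and continuity''. But $G_{\bar{\mathbb{W}}_{\underline{\gamma}}}$ is only \emph{sublinear}, and a sublinear function on $\mathbb{S}(n)$ is not determined by its values on a spanning set. What (i)--(ii) (together with the additivity $\bar{\mathbb{W}}_{A\cup B}=\bar{\mathbb{W}}_{A}+\bar{\mathbb{W}}_{B}$ q.s., which follows from (\ref{Gwn2})) actually pin down are the values of $G_{\bar{\mathbb{W}}_{\underline{\gamma}}}$ on purely off-diagonal matrices and on matrices $\pm vv^{T}$ with $v$ a $\{0,1\}$-vector; they say nothing about, e.g., $Q=\mathrm{diag}(1,-1)$ for disjoint $A_{1},A_{2}$. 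This is not a removable technicality. For disjoint $A_{1},A_{2}$, the monotone sublinear function $\tilde{G}(Q):=G(q_{11}\lambda_{A_{1}})+G(q_{22}\lambda_{A_{2}})$ (the support function of the box $[\underline{\sigma}^{2}\lambda_{A_{1}},\overline{\sigma}^{2}\lambda_{A_{1}}]\times[\underline{\sigma}^{2}\lambda_{A_{2}},\overline{\sigma}^{2}\lambda_{A_{2}}]$, i.e.\ the variance uncertainty of the two sets resolved independently) agrees with $G_{\underline{\gamma}}$ of (\ref{g-q}) on every one of those test matrices, yet at $Q=\mathrm{diag}(1,-1)$ with $\lambda_{A_{1}}=\lambda_{A_{2}}=\lambda$ it equals $\frac{\lambda}{2}(\overline{\sigma}^{2}-\underline{\sigma}^{2})>0$, while (\ref{g-q}) gives $G(0)=0$. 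One can check, moreover, that this box-type family (defined on arbitrary tuples through the atoms of the generated algebra) is compatible and symmetric, so by Theorem~\ref{existence of G.R.F.} it produces a $G$-Gaussian random field which still satisfies (i)--(ii); hence the identity $G_{\bar{\mathbb{W}}_{\underline{\gamma}}}(Q)=G(\sum_{i,j}q_{ij}\lambda_{A_{i}\cap A_{j}})$ for all $Q\in\mathbb{S}(n)$ cannot be deduced merely by re-testing the matrices used for (i)--(ii). To be fair, the paper's own proof of this clause is equally incomplete---it asserts exactly this identity without derivation---but your specific repair (linearity plus density) is not a valid one, and any honest proof would need an additional argument, or a stronger hypothesis, at precisely this point.
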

\begin{proof}
Since the compatibility and symmetry conditions of $\{G_{\underline{\gamma}}\}_{\underline{\gamma}\in\mathcal{J}_{\Gamma}}$
has already been proved, it then follows from Lemma \ref{G-w.n.}
and Theorem \ref{existence of G.R.F.} that the existence and uniqueness
of the $G$-Gaussian random field $\mathbb{W}$ in a complete sublinear
expectation space $(\Omega,\mathbb{L}_{G}^{2}(\mathbb{W}),\hat{\mathbb{E}})$
with the family of generating functions defined by (\ref{g-q}) hold.
We only need to prove that $\mathbb{W}$ is a $G$-white noise. It
suffices to verify that the $G$-random field $\mathbb{W}$ induced
by the family of generating functions defined in (\ref{g-q}) satisfies
conditions (i)-(ii) of Definition~\ref{Gwhitenoise}.

For any $A\in\Gamma$,
\[
  \hat{\mathbb{E}}[\mathbb{W}_A^2]=2G_A(1)=2G(\lambda_A)=\overline{\sigma}^2\lambda_A.
\]
Similarly, we have $\hat{\mathbb{E}}[-\mathbb{W}_A^2]=-\underline{\sigma}^2\lambda_A$, which verifies (i). 
Now consider relation (\ref{Gwn1}) in (ii). Let $A_{1},A_{2}\in\Gamma$
be such that $A_{1}\cap A_{2}=\emptyset$. On the other hand, we can
choose a $2\times2$ symmetric matrix $Q=(q_{ij})_{i,j=1}^{2}$ to
express 
\begin{eqnarray*}
{\hat{\mathbb{E}}}[\mathbb{W}_{A_{1}}\mathbb{W}_{A_{2}}]={\hat{\mathbb{E}}}[\sum_{i,j=1}^{2}q_{ij}\mathbb{W}_{A_{i}}\mathbb{W}_{A_{j}}],
\end{eqnarray*}
where we set 
\[
Q=(q_{ij})_{i,j=1}^{2}=\left[\begin{array}{cc}
0 & \frac{1}{2}\\
\frac{1}{2} & 0
\end{array}\right].
\]
It then follows from (\ref{g-q}) that 
\begin{align*}
\hat{\mathbb{E}}[\mathbb{W}_{A_{1}}\mathbb{W}_{A_{2}}]=\hat{\mathbb{E}}[\sum_{i,j=1}^{2}q_{ij}\mathbb{W}_{A_{i}}\mathbb{W}_{A_{j}}] & =G(\sum_{i,j=1}^{2}q_{ij}\lambda_{A_{i}\cap A_{j}})=0.
\end{align*}
Similarly, we can get $\hat{\mathbb{E}}[-\mathbb{W}_{A_{1}}\mathbb{W}_{A_{2}}]=0$,
thus (\ref{Gwn1}) holds.

To prove (\ref{Gwn2}), let $A_{1},A_{2}\in\Gamma$ be such that $A_{1}\cap A_{2}=\emptyset$
and denote $A_{3}=A_{1}\cup A_{2}$. Then 
\[
\hat{\mathbb{E}}[(\mathbb{W}_{A_{1}}+\mathbb{W}_{A_{2}}-\mathbb{W}_{A_{3}})^{2}]=\hat{\mathbb{E}}[\sum_{i,j=1}^{3}q_{ij}\mathbb{W}_{A_{i}}\mathbb{W}_{A_{j}}],
\]
where we denote 
\[
Q=(q_{ij})_{i,j=1}^{3}=\left[\begin{array}{ccc}
1 & 1 & -1\\
1 & 1 & -1\\
-1 & -1 & 1
\end{array}\right].
\]
We then apply (\ref{g-q}) to get 
\[
\hat{\mathbb{E}}[\sum_{i,j=1}^{3}q_{ij}\mathbb{W}_{A_{i}}\mathbb{W}_{A_{j}}]=G(\sum_{i,j=1}^{3}q_{ij}\lambda_{A_{i}\cup A_{j}})=G(\lambda_{A_{1}}+\lambda_{A_{2}}+\lambda_{A_{1}\cap A_{2}}-2(\lambda_{A_{2}}+\lambda_{A_{1}})]=0,
\]
which yields (\ref{Gwn2}).

Now let $\{{\bar{\mathbb{W}}}_{\gamma}\}_{\gamma\in \Gamma}$ be a $G$-white noise, associated with the same function $G$,   on a complete  sublinear  expectation space $(\bar{\Omega}, \mathbb{L}^2_{G}(\bar{\mathbb{W}}),\bar{\mathbb{E}})$. Then 
for each $\underline{\gamma}=(A_1,\cdots,A_n)\in\mathcal{J}_\Gamma$, $Q=(q_{ij})_{i,j=1}^n\in\mathbb{S}(n)$,
\[
G_{\bar{\mathbb{W}}_{\underline{\gamma}}}(Q)=\frac{1}{2}\hat{\mathbb{E}}[\langle Q\bar{\mathbb{W}}_{\underline{\gamma}},\bar{\mathbb{W}}_{\underline{\gamma}}\rangle]
=G(\sum_{i,j=1}^{n}q_{ij}\lambda_{A_{i}\cap A_{j}})=G_{\underline{\gamma}}(Q).
\]
Thus  $\{{\bar{\mathbb{W}}}_{\gamma}\}_{\gamma\in \Gamma}\overset{d}{=}\{{{\mathbb{W}}}_{\gamma}\}_{\gamma\in \Gamma}$ and the proof is complete.

\end{proof}
We can use a similar approach to prove that, for each $A,B\in\mathcal{B}_{0}(\mathbb{R}^{d})$,
we have 
\[
\hat{\mathbb{E}}(\mathbb{W}_{A\cup B}+\mathbb{W}_{A\cap B}-\mathbb{W}_{A}-\mathbb{W}_{B})^{2}]=0.
\]

\begin{rem}
From the above lemma, it follows directly that $\mathbb{W}_{A_{1}\cup A_{2}}=\mathbb{W}_{A_{1}}+\mathbb{W}_{A_{2}}-\mathbb{W}_{A_{1}\cap A_{2}}$
for any $A_{1},A_{2}\in\Gamma$. Futhermore, if $\{A_{i}\}_{i=1}^{\infty}$
is a mutually disjoint sequence of $\Gamma$ such that $\sum\limits _{i=1}^{\infty}\lambda_{A_{i}}<\infty$,
then 
\[
\mathbb{W}\left(\bigcup_{i=1}^{\infty}A_{i}\right)=\sum_{i=1}^{\infty}\mathbb{W}_{A_{i}}\in\mathbb{L}_{G}^{2}(\mathbb{W}).
\]

\end{rem}
A very important property of spatial $G$-white noise is that it is
invariant under rotations and translations: 
\begin{prop}
\label{x-invariant} For each $p\in\mathbb{R}^{d}$ and $O\in\mathbb{O}(d):=\{O\in\mathbb{R}^{d\times d},\,O^{T}=O^{-1}\}$,
we set 
\[
T_{p,O}(A)=\{Ox+p:x\in A\}.
\]
Then, for each $A_{1},\cdots,A_{n}\in\mathcal{B}_{0}(\mathbb{R}^{d})$,
we have 
\[
(\mathbb{W}_{A_{1}},\cdots,\mathbb{W}_{A_{n}})\overset{d}{=}(\mathbb{W}_{T_{p,O}(A_{1})},\cdots,\mathbb{W}_{T_{p,O}(A_{n})}).
\]
Namely the finite dimensional distributions of $\mathbb{W}$ are invariant
under rotations and translations. \end{prop}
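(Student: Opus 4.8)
The plan is to reduce the claimed identity in distribution to an identity between the associated generating functions, and then invoke the uniqueness part of the construction. By Theorem~\ref{existence of G.W.N.}, for each $\underline{\gamma}=(A_{1},\cdots,A_{n})\in\mathcal{J}_{\Gamma}$ the random vector $(\mathbb{W}_{A_{1}},\cdots,\mathbb{W}_{A_{n}})$ is $G$-normally distributed, and by Proposition~\ref{G-normal existence} its law is completely determined by the generating function
\[
G_{A_{1},\cdots,A_{n}}(Q)=G\Big(\sum_{i,j=1}^{n}q_{ij}\,\lambda_{A_{i}\cap A_{j}}\Big),\qquad Q=(q_{ij})_{i,j=1}^{n}\in\mathbb{S}(n),
\]
as in (\ref{g-q}). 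Hence it suffices to show that the matrix of pairwise intersection measures $(\lambda_{A_{i}\cap A_{j}})_{i,j}$ is unchanged when every $A_{i}$ is replaced by $T_{p,O}(A_{i})$.

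First I would record two elementary facts about the rigid motion $T_{p,O}\colon x\mapsto Ox+p$. Since $O\in\mathbb{O}(d)$ is invertible, $T_{p,O}$ is an affine bijection of $\mathbb{R}^{d}$, and therefore commutes with set-theoretic intersection,
\[
T_{p,O}(A_{i})\cap T_{p,O}(A_{j})=T_{p,O}(A_{i}\cap A_{j}).
\]
Second, because $|\det O|=1$, the change-of-variables formula yields $\lambda_{T_{p,O}(A)}=\lambda_{A}$ for every Borel set $A$; in particular $T_{p,O}$ maps $\Gamma=\mathcal{B}_{0}(\mathbb{R}^{d})$ into itself, so that the right-hand vector in the statement is well defined.

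Combining these two facts, for all $i,j$ we obtain
\[
\lambda_{T_{p,O}(A_{i})\cap T_{p,O}(A_{j})}=\lambda_{T_{p,O}(A_{i}\cap A_{j})}=\lambda_{A_{i}\cap A_{j}},
\]
so the two intersection-measure matrices coincide. Consequently the generating functions agree,
\[
G_{T_{p,O}(A_{1}),\cdots,T_{p,O}(A_{n})}(Q)=G_{A_{1},\cdots,A_{n}}(Q)\quad\text{for all }Q\in\mathbb{S}(n),
\]
and since both $(\mathbb{W}_{A_{1}},\cdots,\mathbb{W}_{A_{n}})$ and $(\mathbb{W}_{T_{p,O}(A_{1})},\cdots,\mathbb{W}_{T_{p,O}(A_{n})})$ are $G$-normally distributed, the uniqueness assertion of Proposition~\ref{G-normal existence} forces them to have the same distribution. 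As $\underline{\gamma}$ was arbitrary, this is precisely the claimed invariance of all finite dimensional distributions under rotations and translations.

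I do not expect a genuine obstacle here: the argument is a direct computation once one observes that the entire finite dimensional law is encoded in the pairwise intersection measures. The only step deserving care is the invariance $\lambda_{T_{p,O}(A)}=\lambda_{A}$, which rests on $|\det O|=1$; everything else reduces to the bijectivity of $T_{p,O}$ and the uniqueness of $G$-normal laws already established.
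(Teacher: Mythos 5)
Your proposal is correct and follows essentially the same route as the paper, whose proof is a one-line appeal to the invariance of Lebesgue measure under rotations and translations: in both cases the point is that the finite dimensional laws are determined, via the uniqueness of $G$-normal distributions, by the intersection-measure matrix $(\lambda_{A_{i}\cap A_{j}})_{i,j}$, which is unchanged by $T_{p,O}$. You merely make explicit two steps the paper leaves implicit, namely the reduction to generating functions and the identity $T_{p,O}(A_{i})\cap T_{p,O}(A_{j})=T_{p,O}(A_{i}\cap A_{j})$.
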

\begin{proof}
It is a direct consequence of the well-know invariance of the Lebesgue
measure under rotations and translations, i.e., 
\[
\lambda_{A}=\lambda_{T_{p,O}(A)},\,\,\,\forall(p,O)\in\mathbb{R}^{d}\times O(d),\,\,\,A\in\mathcal{B}_{0}(\mathbb{R}^{d}).
\]
\end{proof}
According to Theorem \ref{existence of G.W.N.}, we can calculate
and characterize each finite dimensional distribution of the $G$-white
noise $\mathbb{W}$ by the nonlinear heat equations.

Let $\mathbb{W}$ be a $G$-white noise on a sublinear expectation
space $(\Omega,\mathcal{H},\hat{\mathbb{E}})$ with index $\Gamma=\mathcal{B}_{0}(\mathbb{R}^{d})$.
For each $\underline{\gamma}=(A_{1},\cdots,A_{n})\in\mathcal{J}_{\Gamma}$,
$\mathbb{W}_{\underline{\gamma}}=(\mathbb{W}_{A_{1}},\cdots,\mathbb{W}_{A_{n}})$,
$n\in\mathbb{N}$, we have 
\[
G_{\mathbb{W}_{\underline{\gamma}}}(Q)=\frac{1}{2}\hat{\mathbb{E}}[\langle Q{\mathbb{W}}_{\underline{\gamma}},{\mathbb{W}}_{\underline{\gamma}}\rangle]=G_{\underline{\gamma}}(Q),%=\sum_{l=0}^{{2^{n}-2}}G(\sum_{i,j=1}^{n}q_{ij}\delta_{li}^{(n)}\delta_{lj}^{(n)})\lambda_{D_{l}^{(n)}},
\]
where $Q=(q_{ij})\in\mathbb{S}(n)$ and $G(q)=\frac{\overline{\sigma}^{2}}{2}q^{+}-\frac{\underline{\sigma}^{2}}{2}q^{-},\ q\in\mathbb{R}$.
For any given $\varphi\in C_{l.Lip}(\mathbb{R}^{n})$, set 
\[
u(t,x_{1},\cdots,x_{n})=\hat{\mathbb{E}}[\varphi(x_{1}+\sqrt{t}\mathbb{W}_{A_{1}},\cdots,x_{n}+\sqrt{t}\mathbb{W}_{A_{n}})],\,\,\,\,\,(t,x)\in\lbrack0,\infty)\times\mathbb{R}.
\]
Then by Proposition \ref{G heat equation}, $u$ is the unique viscosity
solution of the Cauchy problem of the nonlinear heat equation: 
\begin{equation}
\partial_{t}u-G_{\underline{\gamma}}(D_{xx}^{2}u)=0,\ \ u|_{t=0}=\varphi(x_{1},\cdots,x_{n}).\label{f. d. heat equation}
\end{equation}
We then can obtain the nonlinear expected value by 
\[
\hat{\mathbb{E}}[\varphi(\mathbb{W}_{A_{1}},\cdots,\mathbb{W}_{A_{n}})]=u|_{t=1,x_{1},\cdots,x_{n}=0}.
\]

\subsection{Stochastic integrals with respect to a spatial  white noise}

Now let us  define the stochastic integral with respect to
$G$-white noise in three steps.

 Let $\{{{\mathbb{W}}}_{\gamma}\}_{\gamma\in \Gamma}$, $\Gamma=\mathcal{B}_{0}(\mathbb{R}^{d}),$ be a $1$-dimensional $G$-white noise with parameters $\underline{\sigma}^2,\overline{\sigma}^2$ on a complete  sublinear  expectation space $({\Omega}, \mathbb{L}^2_{G}({\mathbb{W}}),\hat{\mathbb{E}})$.

 \textbf{Step 1:} For each $A\in\Gamma$,
$\lambda_{A}<\infty$, suppose $f:\mathbb{R}^{d}\rightarrow\mathbb{R}$
is an elementary function with the form $f(x)=\textbf{1}_{A}(x),x\in\mathbb{R}^{d}$.
We can define 
\[
\int_{\mathbb{R}^{d}}f(x)\mathbb{W}(dx)=\int_{\mathbb{R}^{d}}\textbf{1}_{A}(x)\mathbb{W}(dx)=\mathbb{W}_{A}.
\]
\textbf{Step 2:} For any simple function $f(x)=\sum\limits _{i=1}^{n}a_{i}\textbf{1}_{A_{i}}(x),n\in\mathbb{N},a_{1},\cdots,a_{n}\in\mathbb{R},A_{1},\cdots,A_{n}\in\Gamma$,
we define 
\[
\int_{\mathbb{R}^{d}}(\sum\limits _{i=1}^{n}a_{i}\textbf{1}_{A_{i}}(x))\mathbb{W}(dx)=\sum\limits _{i=1}^{n}a_{i}\int_{\mathbb{R}^{d}}\textbf{1}_{A_{i}}(x)\mathbb{W}(dx)=\sum\limits _{i=1}^{n}a_{i}\mathbb{W}_{A_{i}}.
\]
\textbf{Step 3:} Let $L^{2}(\mathbb{R}^{d})$ denote the space of
all functions $f:\mathbb{R}^{d}\rightarrow\mathbb{R}$ such that $\|f\|_{L^{2}}^{2}=\int_{\mathbb{R}^{d}}|f(x)|^{2}dx<\infty.$
By the following lemma, we can extend the stochastic integral to $L^{2}(\mathbb{R}^{d})$. 
\begin{lem}
If $f:\mathbb{R}^{d}\rightarrow\mathbb{R}$ is a simple function,
then 
\[
\hat{\mathbb{E}}[|\int_{\mathbb{R}^{d}}f(x)\mathbb{W}(dx)|^{2}]=\overline{\sigma}^{2}\|f\|_{L^{2}}^{2}.
\]
\end{lem}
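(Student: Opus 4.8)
The plan is to reduce the statement to the generating-function characterization of $\mathbb{W}$ established in Theorem~\ref{existence of G.W.N.}. First I would fix a representation of the simple function, $f=\sum_{i=1}^{n}a_{i}\mathbf{1}_{A_{i}}$, where (after merging repeated terms) the sets $A_{1},\dots,A_{n}\in\Gamma$ are distinct, so that $\underline{\gamma}=(A_{1},\dots,A_{n})\in\mathcal{J}_{\Gamma}$. By Step~2 the integral is the random variable $\int_{\mathbb{R}^{d}}f(x)\mathbb{W}(dx)=\sum_{i=1}^{n}a_{i}\mathbb{W}_{A_{i}}$, and hence its square is the quadratic form $\langle Q\mathbb{W}_{\underline{\gamma}},\mathbb{W}_{\underline{\gamma}}\rangle$ in the random vector $\mathbb{W}_{\underline{\gamma}}=(\mathbb{W}_{A_{1}},\dots,\mathbb{W}_{A_{n}})$, where $Q=(a_{i}a_{j})_{i,j=1}^{n}=aa^{T}$ is the rank-one symmetric (positive semidefinite) matrix associated with $a=(a_{1},\dots,a_{n})^{T}$.

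Next I would invoke the identity recorded in Theorem~\ref{existence of G.W.N.}, namely $\tfrac{1}{2}\hat{\mathbb{E}}[\langle Q\mathbb{W}_{\underline{\gamma}},\mathbb{W}_{\underline{\gamma}}\rangle]=G\big(\sum_{i,j=1}^{n}q_{ij}\lambda_{A_{i}\cap A_{j}}\big)$, with $q_{ij}=a_{i}a_{j}$. This converts the computation of the second moment into the evaluation of the scalar function $G$ at the single real number $\sum_{i,j}a_{i}a_{j}\lambda_{A_{i}\cap A_{j}}$.

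The key observation is the elementary $L^{2}$ identity $\sum_{i,j=1}^{n}a_{i}a_{j}\lambda_{A_{i}\cap A_{j}}=\int_{\mathbb{R}^{d}}\big(\sum_{i=1}^{n}a_{i}\mathbf{1}_{A_{i}}(x)\big)^{2}\,dx=\|f\|_{L^{2}}^{2}$, which follows from $\lambda_{A_{i}\cap A_{j}}=\int_{\mathbb{R}^{d}}\mathbf{1}_{A_{i}}\mathbf{1}_{A_{j}}\,dx$ and holds whether or not the sets overlap. In particular this number is nonnegative, so the argument of $G$ lands in the region where $G(a)=\tfrac{1}{2}\overline{\sigma}^{2}a$; substituting gives $\hat{\mathbb{E}}[|\int_{\mathbb{R}^{d}}f\,\mathbb{W}(dx)|^{2}]=2G(\|f\|_{L^{2}}^{2})=\overline{\sigma}^{2}\|f\|_{L^{2}}^{2}$, as claimed.

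I do not expect a serious obstacle here: the computation is essentially a bookkeeping of the quadratic form together with the $L^{2}$ identity, and the nonnegativity of $\|f\|_{L^{2}}^{2}$ is precisely what selects the $\overline{\sigma}^{2}$ branch of $G$ rather than the $\underline{\sigma}^{2}$ one. The only point requiring a little care is well-definedness of the integral under a change of representation of $f$; this is harmless here because the second-moment formula depends on $f$ only through $\|f\|_{L^{2}}$, but if one prefers, one may first pass to a disjoint refinement $\{B_{k}\}$ of the sets $\{A_{i}\}$ using the finite additivity $\mathbb{W}_{A\cup B}=\mathbb{W}_{A}+\mathbb{W}_{B}-\mathbb{W}_{A\cap B}$ (q.s.) noted after Theorem~\ref{existence of G.W.N.}, which makes the off-diagonal terms $\lambda_{B_{k}\cap B_{l}}$ ($k\neq l$) vanish outright and renders the selection of the $\overline{\sigma}^{2}$ branch manifest.
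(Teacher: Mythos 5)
Your proof is correct, but it follows a genuinely different route from the paper's. The paper first reduces, without loss of generality, to a representation $f=\sum_{i=1}^{n}a_{i}\mathbf{1}_{A_{i}}$ with \emph{pairwise disjoint} sets $A_{i}$; it then deletes the cross terms $a_{i}a_{j}\mathbb{W}_{A_{i}}\mathbb{W}_{A_{j}}$ ($i\neq j$) using property (\ref{Gwn1}) of Definition~\ref{Gwhitenoise} together with Lemma~\ref{pmLemma}, and finally estimates $\hat{\mathbb{E}}[\sum_{i}a_{i}^{2}\mathbb{W}_{A_{i}}^{2}]\leq\sum_{i}a_{i}^{2}\hat{\mathbb{E}}[\mathbb{W}_{A_{i}}^{2}]=\overline{\sigma}^{2}\|f\|_{L^{2}}^{2}$ by sub-additivity. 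Note that this last step is an inequality: as printed, the paper's chain establishes only the upper bound $\hat{\mathbb{E}}[|\int_{\mathbb{R}^{d}}f(x)\mathbb{W}(dx)|^{2}]\leq\overline{\sigma}^{2}\|f\|_{L^{2}}^{2}$, which is all that is used afterwards (the contraction property of the integral map), but not literally the equality asserted in the lemma. Your argument bypasses both the disjointness reduction and the sub-additivity step: you write the squared integral as the quadratic form $\langle Q\mathbb{W}_{\underline{\gamma}},\mathbb{W}_{\underline{\gamma}}\rangle$ with $Q=aa^{T}$, invoke the generating-function identity of Theorem~\ref{existence of G.W.N.} (which, by the uniqueness part of that theorem, holds for any $G$-white noise with the given $G$, so the invocation is legitimate for the white noise fixed in this subsection), and reduce everything to evaluating $G$ at the single number $\sum_{i,j}a_{i}a_{j}\lambda_{A_{i}\cap A_{j}}=\|f\|_{L^{2}}^{2}\geq0$, whose nonnegativity selects the $\overline{\sigma}^{2}$ branch of $G$. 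So your proof actually delivers more than the paper's printed one---the exact equality rather than just the bound---at the cost of using the $G$-normal (generating-function) structure of the finite-dimensional distributions rather than only the elementary white-noise axioms (i)--(ii) and Lemma~\ref{pmLemma}; your closing remark about well-definedness of the integral under a change of representation of $f$ addresses a point the paper passes over silently in its ``without loss of generality'' reduction.
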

\begin{proof} Without loss of generality, we
suppose $f(x)=\sum\limits _{i=1}^{n}a_{i}\textbf{1}_{A_{i}}(x)$, where
$a_{1},\cdots,a_{n}\in\mathbb{R},A_{1},\cdots,A_{n}\in\Gamma$ and
$A_{i}\cap A_{j}=\emptyset$ if $i\neq j$. Then 
\begin{align*}
\hat{\mathbb{E}}[|\int_{\mathbb{R}^{d}}f(x)\mathbb{W}(dx)|^{2}] & =\hat{\mathbb{E}}[|\sum\limits _{i=1}^{n}a_{i}\mathbb{W}_{A_{i}}|^{2}]=\hat{\mathbb{E}}[\sum\limits _{i=1}^{n}a_{i}^{2}\mathbb{W}_{A_{i}}^{2}]\\
 & \leq\sum\limits _{i=1}^{n}a_{i}^{2}\hat{\mathbb{E}}[\mathbb{W}_{A_{i}}^{2}]=\overline{\sigma}^{2}\sum\limits _{i=1}^{n}a_{i}^{2}\lambda_{{A_{i}}}=\overline{\sigma}^{2}\|f\|_{L^{2}}^{2}.
\end{align*}

\end{proof}

This lemma implies that the linear mapping 
\[
\int_{\mathbb{R}^{d}}f(x)\mathbb{W}(dx):L^{2}(\mathbb{R}^{d})\mapsto\mathbb{L}_{G}^{2}(\mathbb{W})
\]
is contract. We then can continuously extend this mapping to the the
whole domain of $L^{2}(\mathbb{R}^{d})$. We still denote this mapping
by$\int_{\mathbb{R}^{d}}f(x)\mathbb{W}(dx)$ and call it integral
of $f\in L^{2}(\mathbb{R}^{d})$ with respect to the spatial  white
noise $\mathbb{W}.$

For each $B\subset\mathbb{R}^{d}$, $f\in L^{2}(\mathbb{R}^{d})$,
we can also define 
\[
\int_{B}f(x)\mathbb{W}(dx)=\int_{\mathbb{R}^{d}}f(x)\textbf{1}_{B}(x)\mathbb{W}(dx).
\]

\begin{thm}
\label{th1} Let $\Gamma=\mathcal{B}_{0}(\mathbb{R}^{d})$ and $\mathbb{W}=\{\mathbb{W}_{A},{A\in\Gamma}\}$
be a 1-dimensional $G$-white noise on a sublinear expectation space
$(\Omega,\mathcal{H},\hat{\mathbb{E}})$. Then $\{\int_{\mathbb{R}^{d}}f(x)\mathbb{W}(dx):f\in L^{2}(\mathbb{R}^{d})\}$
is a $G$-Gaussian random field. 
\end{thm}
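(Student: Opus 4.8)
The plan is to verify that the family $\{\int_{\mathbb{R}^d} f(x)\,\mathbb{W}(dx) : f \in L^2(\mathbb{R}^d)\}$, indexed by $f \in L^2(\mathbb{R}^d)$, is a $G$-Gaussian random field in the sense of the definition in Section~3.2. That is, for each finite collection $f_1,\dots,f_n \in L^2(\mathbb{R}^d)$, I must show the random vector $I_{\underline{f}} := (\int f_1\,d\mathbb{W}, \dots, \int f_n\,d\mathbb{W})$ is $G$-normally distributed, i.e.\ $a I_{\underline f} + b \bar{I}_{\underline f} \overset{d}{=} \sqrt{a^2+b^2}\, I_{\underline f}$ for all $a,b \geq 0$, where $\bar{I}_{\underline f}$ is an independent copy. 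The natural strategy is a two-stage approximation: first establish the Gaussian property for simple functions directly from the fact that $\mathbb{W}$ is a $G$-Gaussian random field, then pass to the $L^2$ limit using the isometry-type bound from the preceding lemma together with the continuity of convergence in distribution.

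First I would treat the case of simple functions. If each $f_k = \sum_i a_{ki}\mathbf{1}_{A_i}$ is written over a common partition $\{A_i\}$ of pairwise disjoint sets in $\Gamma$, then each $\int f_k\,d\mathbb{W} = \sum_i a_{ki}\mathbb{W}_{A_i}$ is a fixed linear combination of the components of $W_{\underline\gamma} = (\mathbb{W}_{A_1},\dots,\mathbb{W}_{A_m})$. Hence $I_{\underline f} = K W_{\underline\gamma}$ for a suitable matrix $K \in \mathbb{R}^{n \times m}$. Since $\mathbb{W}$ is a $G$-Gaussian random field, $W_{\underline\gamma}$ is $G$-normally distributed, and by the proposition establishing that $K\xi$ is $G$-normally distributed whenever $\xi$ is (the linear-image stability, equation~(\ref{Kx1})), it follows immediately that $I_{\underline f}$ is $G$-normally distributed. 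This handles the dense subclass cleanly.

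Next I would pass to the limit. For arbitrary $f_1,\dots,f_n \in L^2(\mathbb{R}^d)$, choose simple functions $f_k^{(\ell)} \to f_k$ in $L^2(\mathbb{R}^d)$ as $\ell \to \infty$. The lemma (with the bound $\hat{\mathbb{E}}[|\int g\,d\mathbb{W}|^2] = \overline{\sigma}^2\|g\|_{L^2}^2$ applied to differences $g = f_k^{(\ell)} - f_k^{(\ell')}$, and to $g = f_k^{(\ell)} - f_k$ after the isometry is extended to all of $L^2$) shows $\int f_k^{(\ell)}\,d\mathbb{W} \to \int f_k\,d\mathbb{W}$ in $\mathbb{L}_G^2(\mathbb{W})$. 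For any $\varphi \in C_{b.Lip}(\mathbb{R}^n)$, the Lipschitz estimate $|\hat{\mathbb{E}}[\varphi(I^{(\ell)})] - \hat{\mathbb{E}}[\varphi(I_{\underline f})]| \leq \hat{\mathbb{E}}[|\varphi(I^{(\ell)}) - \varphi(I_{\underline f})|] \leq C\,\hat{\mathbb{E}}[|I^{(\ell)} - I_{\underline f}|]$ then gives $\mathbb{F}_{I^{(\ell)}}[\varphi] \to \mathbb{F}_{I_{\underline f}}[\varphi]$. Since each $I^{(\ell)}$ is $G$-normally distributed and the defining identity $a I^{(\ell)} + b \bar I^{(\ell)} \overset{d}{=} \sqrt{a^2+b^2}\,I^{(\ell)}$ passes to the limit under convergence in distribution (applying the same Lipschitz continuity to both sides, noting that an independent copy of the limit is the limit of the independent copies), the limit $I_{\underline f}$ is $G$-normally distributed as well.

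The main obstacle I anticipate is the limiting argument for the defining distributional identity, specifically justifying that independent copies behave well under the $L^2$ approximation. One must ensure that $\bar I^{(\ell)} = (\int f_k^{(\ell)}\,d\bar{\mathbb{W}})_k$, built from an independent copy $\bar{\mathbb{W}}$ of the noise on a product space, converges to an independent copy of $I_{\underline f}$, and that the joint test function $\psi(x,y) := \varphi(ax + by)$ in $C_{b.Lip}(\mathbb{R}^{2n})$ interacts correctly with the definition of independence $\hat{\mathbb{E}}[\psi(X,Y)] = \hat{\mathbb{E}}[\hat{\mathbb{E}}[\psi(x,Y)]_{x=X}]$. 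The cleanest route is to verify the $G$-normality criterion purely at the level of generating functions: the $L^2$ convergence forces $G_{I^{(\ell)}}(Q) = \tfrac12\hat{\mathbb{E}}[\langle Q I^{(\ell)}, I^{(\ell)}\rangle] \to \tfrac12\hat{\mathbb{E}}[\langle Q I_{\underline f}, I_{\underline f}\rangle]$ for each $Q \in \mathbb{S}(n)$, and since each $G_{I^{(\ell)}}$ is the generating function of a genuine $G$-normal law, the limit inherits the sublinearity and monotonicity of~(\ref{G properties}); by the uniqueness in Proposition~\ref{G-normal existence}, the limiting distribution is exactly the $G$-normal law with generating function $G_{I_{\underline f}}$, which is what the claim asserts.
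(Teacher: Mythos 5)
Your overall skeleton matches the paper's: reduce to simple functions, then pass to the $L^2$ limit. The simple-function step is correct and is essentially what the paper does (it cites a proposition of Peng's book for the stability of $G$-normality under linear maps, which is exactly your $I_{\underline{f}}=KW_{\underline{\gamma}}$ argument via (\ref{Kx1})). The problem is the limit step. Your middle paragraph correctly isolates the crux --- that ``an independent copy of the limit is the limit of the independent copies'' needs justification --- but the ``cleanest route'' you then offer to settle it does not work. The generating function $G_X(Q)=\tfrac{1}{2}\hat{\mathbb{E}}[\langle QX,X\rangle]$ does \emph{not} characterize the law of $X$ among all distributions; it pins the law down only \emph{within} the class of $G$-normal distributions. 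The uniqueness clause of Proposition~\ref{G-normal existence} presupposes that both random vectors are already $G$-normally distributed, so invoking it for $I_{\underline{f}}$ is circular: whether $I_{\underline{f}}$ is $G$-normal is precisely what you are trying to prove. (Already in the classical linear case, matching second moments does not force Gaussianity.) Thus knowing $G_{I^{(\ell)}}(Q)\to\tfrac{1}{2}\hat{\mathbb{E}}[\langle QI_{\underline{f}},I_{\underline{f}}\rangle]$ pointwise, together with convergence in distribution of $I^{(\ell)}$ to $I_{\underline{f}}$, gives nothing about normality of the limit unless you also prove a continuity statement for the map sending $G$ to the $G$-normal law --- which you neither state nor prove.

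The paper closes exactly this gap with its Lemma~\ref{normal}: if $X_n$ are $G$-normally distributed and $\hat{\mathbb{E}}[|X_n-X|^p]\to 0$ for some $p\geq 1$, then $X$ is $G$-normally distributed. Its proof is the conditioning argument your middle paragraph gestures at: using independence to write $\hat{\mathbb{E}}[\varphi(aX_n+b\bar{X}_n)]=\hat{\mathbb{E}}[\varphi_n(aX_n)]$ with $\varphi_n(y):=\hat{\mathbb{E}}[\varphi(y+bX_n)]$, observing that all $\varphi_n$ share the Lipschitz constant of $\varphi$ and satisfy $\sup_y|\varphi_n(y)-\bar{\varphi}(y)|\leq bC_{\varphi}\hat{\mathbb{E}}[|X_n-X|]$ where $\bar{\varphi}(y):=\hat{\mathbb{E}}[\varphi(y+bX)]$, and thereby passing the identity $aX_n+b\bar{X}_n\overset{d}{=}\sqrt{a^2+b^2}X_n$ to the limit. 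If you insist on salvaging the generating-function route instead, you would have to supply the missing continuity statement yourself, e.g.\ via stability of viscosity solutions of the $G$-heat equation (\ref{G equation}) under locally uniform convergence of the operators $G_{I^{(\ell)}}\to G_{I_{\underline{f}}}$ (pointwise convergence of these sublinear, hence convex, functions does upgrade to locally uniform convergence); that is a correct but substantially longer detour than the lemma the paper actually proves.
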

To prove this theorem, we need the following lemma to show that the
convergence of a sequence of $G$-normally distributed random vectors
is still $G$-normally distributed. 
\begin{lem}
\label{normal} Let $\{X_{n}\}_{n=1}^{\infty}$ be a sequence of $m$-dimensional
$G$-normally distributed random variables on a sublinear expectation
space $(\Omega,\mathcal{H},\hat{\mathbb{E}})$. If for some $p\geq1$,
$\lim\limits _{n\rightarrow\infty}\hat{\mathbb{E}}[|X_{n}-X|^{p}]=0$,
then $X$ is also $G$-normally distributed. \end{lem}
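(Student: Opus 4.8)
The plan is to verify the defining scaling relation of the $G$-normal distribution directly for the limit $X$, by passing to the limit in the same relation for each $X_{n}$. Fix $a,b\geq 0$; I must show that $\hat{\mathbb{E}}[\varphi(aX+b\bar{X})]=\hat{\mathbb{E}}[\varphi(\sqrt{a^{2}+b^{2}}X)]$ for a sufficiently rich class of test functions $\varphi$, where $\bar{X}$ is an independent copy of $X$. First I observe that the hypothesis $\hat{\mathbb{E}}[|X_{n}-X|^{p}]\to 0$, together with the norm inequality $\|\cdot\|_{\mathbb{L}^{1}}\leq\|\cdot\|_{\mathbb{L}^{p}}$, gives $\hat{\mathbb{E}}[|X_{n}-X|]\to 0$. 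Hence for every $\varphi\in C_{b.Lip}(\mathbb{R}^{m})$ with Lipschitz constant $C$, the estimate $|\hat{\mathbb{E}}[\varphi(X_{n})]-\hat{\mathbb{E}}[\varphi(X)]|\leq\hat{\mathbb{E}}[|\varphi(X_{n})-\varphi(X)|]\leq C\,\hat{\mathbb{E}}[|X_{n}-X|]$ shows $\hat{\mathbb{E}}[\varphi(X_{n})]\to\hat{\mathbb{E}}[\varphi(X)]$; in particular the right-hand side is $\lim_{n}\hat{\mathbb{E}}[\varphi(\sqrt{a^{2}+b^{2}}X_{n})]$.

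The delicate point is the left-hand side, since it involves the joint law of $X$ and its independent copy. I would enlarge the space so that the independent copies $\bar{X}_{n}$ of $X_{n}$ and $\bar{X}$ of $X$ coexist, and then show $(X_{n},\bar{X}_{n})\to(X,\bar{X})$ in distribution. Using the definitional form of independence (together with $\bar{X}_{n}\overset{d}{=}X_{n}$), for $\psi\in C_{b.Lip}(\mathbb{R}^{2m})$ set $\phi_{n}(x):=\hat{\mathbb{E}}[\psi(x,X_{n})]$ and $\phi(x):=\hat{\mathbb{E}}[\psi(x,X)]$, so that $\hat{\mathbb{E}}[\psi(X_{n},\bar{X}_{n})]=\hat{\mathbb{E}}[\phi_{n}(X_{n})]$ and $\hat{\mathbb{E}}[\psi(X,\bar{X})]=\hat{\mathbb{E}}[\phi(X)]$. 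The key observation is that $|\phi_{n}(x)-\phi(x)|\leq C\,\hat{\mathbb{E}}[|X_{n}-X|]$ with a bound that is \emph{uniform} in $x$, so $\phi_{n}\to\phi$ uniformly, while each $\phi_{n},\phi$ is bounded and Lipschitz with the constants of $\psi$. Splitting
\[
|\hat{\mathbb{E}}[\phi_{n}(X_{n})]-\hat{\mathbb{E}}[\phi(X)]|\leq\hat{\mathbb{E}}[|\phi_{n}(X_{n})-\phi(X_{n})|]+|\hat{\mathbb{E}}[\phi(X_{n})]-\hat{\mathbb{E}}[\phi(X)]|,
\]
and bounding the first term by $\|\phi_{n}-\phi\|_{\infty}$ (constant preserving and monotonicity) and the second by the first step applied to the fixed function $\phi$, both terms vanish as $n\to\infty$.

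Taking $\psi(x,y)=\varphi(ax+by)\in C_{b.Lip}(\mathbb{R}^{2m})$, the previous step yields $\hat{\mathbb{E}}[\varphi(aX_{n}+b\bar{X}_{n})]\to\hat{\mathbb{E}}[\varphi(aX+b\bar{X})]$. Since each $X_{n}$ is $G$-normal, $\hat{\mathbb{E}}[\varphi(aX_{n}+b\bar{X}_{n})]=\hat{\mathbb{E}}[\varphi(\sqrt{a^{2}+b^{2}}X_{n})]$ for every $n$, and letting $n\to\infty$ on both sides gives $\hat{\mathbb{E}}[\varphi(aX+b\bar{X})]=\hat{\mathbb{E}}[\varphi(\sqrt{a^{2}+b^{2}}X)]$ for all $\varphi\in C_{b.Lip}(\mathbb{R}^{m})$. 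To upgrade this to all $\varphi\in C_{l.Lip}(\mathbb{R}^{m})$, hence to the relation $\overset{d}{=}$ in the sense of the paper, I would approximate a locally Lipschitz $\varphi$ by bounded Lipschitz truncations and control the tails by uniform moment bounds; these are available because all moments of a $G$-normal vector are governed by its covariance $\hat{\mathbb{E}}[\langle QX_{n},X_{n}\rangle]$, which converges and so stays bounded in $n$. This yields that $X$ is $G$-normally distributed.

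I expect the main obstacle to be the middle step, namely showing that independence is preserved in the limit, i.e. that $(X_{n},\bar{X}_{n})\to(X,\bar{X})$ with $\bar{X}$ an independent copy of $X$, since independence under a sublinear expectation is an ordered, nonlinear and non-symmetric notion rather than a property of a product measure. The uniform convergence $\phi_{n}\to\phi$ is precisely what allows the nested expectation in the definition of independence to pass to the limit, and it is the crux of the argument.
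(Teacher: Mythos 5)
Your proposal is correct and follows essentially the same route as the paper's proof: both reduce the joint expectation $\hat{\mathbb{E}}[\varphi(aX_{n}+b\bar{X}_{n})]$ to a nested expectation via the definition of independence (using $\bar{X}_{n}\overset{d}{=}X_{n}$), show that the inner functions $\phi_{n}(x)=\hat{\mathbb{E}}[\varphi(ax+bX_{n})]$ converge \emph{uniformly} with the bound $C\,\hat{\mathbb{E}}[|X_{n}-X|]$ while staying uniformly Lipschitz, and then pass to the limit by a triangle-inequality splitting. The only differences are cosmetic (the order in which the two error terms are split) and your added truncation step upgrading the scaling identity from $C_{b.Lip}$ to $C_{l.Lip}$ test functions, a point the paper's proof leaves implicit.
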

\begin{proof}
As for some $p\geq1$, $\lim\limits _{n\rightarrow\infty}\hat{\mathbb{E}}[|X_{n}-X|^{p}]=0$,
we can deduce that $\{X_{n}\}_{n=1}^{\infty}$ converges in distribution
by 
\[
|\mathbb{{F}}_{X_{n}}[\varphi]-\mathbb{{F}}_{X}[\varphi]|=|\mathbb{\hat{E}}[\varphi(X_{n})]-\mathbb{\hat{E}}[\varphi({X})]|\leq C_{\varphi}\mathbb{\hat{E}}[|X_{n}-X|]\leq C_{\varphi}\mathbb{\hat{E}}[|X_{n}-X|^{p}]^{1/p}\rightarrow0,
\]
as $n\rightarrow\infty$, where $C_{\varphi}$ is the Lipschitz constant of $\varphi\in C_{b.Lip}(\mathbb{R}^{m})$.
Assume that $\bar{X}_{n}$ and $\bar{X}$ are the independent copies
of $X_{n}$ and $X$, respectively. Then, for each $\varphi\in C_{b.Lip}(\mathbb{R}^{m})$, $a,b\geq0$,
we have 
\[
\mathbb{\hat{E}}[\varphi(aX_{n}+b\bar{X}_{n})]=\mathbb{\hat{E}}[\varphi(\sqrt{a^{2}+b^{2}}X_{n})]\rightarrow\mathbb{\hat{E}}[\varphi(\sqrt{a^{2}+b^{2}}X)],\ as\ n\rightarrow\infty.
\]
On the other hand, setting $\varphi_{n}(y)={\hat{\mathbb{E}}}[\varphi(y+bX_{n})]$
and $\bar{\varphi}(y)={\hat{\mathbb{E}}}[\varphi(y+bX)]$, it is easy
to show that $\varphi_{n}(y)$ is a uniform Lipschitz function with
the same Lipschitz constant as $\varphi$, such that $|\varphi_{n}(y)-\bar{\varphi}(y)|\leq bC_{\varphi}{\hat{\mathbb{E}}}[|X-X_{n}|]$,
for all $y\in\mathbb{R}^{m}$. Thus 
\begin{align*}
 & |\mathbb{\hat{E}}[\varphi(aX_{n}+b\bar{X}_{n})]-\mathbb{\hat{E}}[\varphi(aX+b\bar{X})]|\\
= & |\mathbb{\hat{E}}[\varphi_{n}(aX_{n})]-\mathbb{\hat{E}}[\bar{\varphi}(aX)]|\\
\leq & |\mathbb{\hat{E}}[\varphi_{n}(aX_{n})]-\mathbb{\hat{E}}[\varphi_{n}(aX)]|+\mathbb{\hat{E}}[|\varphi_{n}(aX)-\bar{\varphi}(aX)|]\to0,\,\,\,\,\text{ as }n\to\infty.
\end{align*}
Hence, $\mathbb{\hat{E}}[\varphi(aX+b\bar{X})]|=\mathbb{\hat{E}}[\varphi(\sqrt{a^{2}+b^{2}}X)]$,
for each $\varphi\in C_{b.Lip}(\mathbb{R}^{m})$, which completes
the proof. 
\end{proof}
\begin{proof}
[Proof of Theorem \ref{th1}]For any $n\in\mathbb{N}$, $f_{1},\cdots,f_{n}\in L^{2}(\mathbb{R}^{d})$,
there exists a sequence of simple functions $\{(f_{1}^{m},\cdots,f_{n}^{m}),m\geq1\}$
such that $\lim\limits _{m\rightarrow\infty}f_{i}^{m}=f_{i}$ in $L^{2}(\mathbb{R}^{d})$,
$i=1,\cdots,n$, and 
\[
(\int_{\mathbb{R}^{d}}f_{1}(x)\mathbb{W}(dx),\cdots,\int_{\mathbb{R}^{d}}f_{n}(x)\mathbb{W}(dx))=\lim\limits _{m\rightarrow\infty}(\int_{\mathbb{R}^{d}}f_{1}^{m}(x)\mathbb{W}(dx),\cdots,\int_{\mathbb{R}^{d}}f_{n}^{m}(x)\mathbb{W}(dx)).
\]
By Proposition 1.6 in Chapter II in Peng \cite{P2010}, it is easy to verify
that 
\[(\int_{\mathbb{R}^{d}}f_{1}^{m}(x)\mathbb{W}(dx),\cdots,\int_{\mathbb{R}^{d}}f_{n}^{m}(x)\mathbb{W}(dx))
\]
is $G$-normally distributed. Using Lemma \ref{normal}, we deduce
that 
\[
(\int_{\mathbb{R}^{d}}f_{1}(x)\mathbb{W}(dx),\cdots,\int_{\mathbb{R}^{d}}f_{n}(x)\mathbb{W}(dx))
\]
is $G$-normally distributed which completes the proof. \end{proof}
\begin{example}
Let $\{\mathbb{W}_{A},A\in\mathcal{B}_{0}(\mathbb{R})\}$ be a 1-dimensional
$G$-white noise. Define $\mathbb{B}_{t}=\mathbb{W}([0,t])$, $t\in\mathbb{R}_{+}$,
then 
\[
\hat{\mathbb{E}}[\mathbb{B}_{t}\mathbb{B}_{s}]=\overline{\sigma}^{2}\lambda_{1}([0,t]\cap[0,s])=\overline{\sigma}^{2}(s\wedge t).
\]
Different from the classical case, $\mathbb{B}_{t}$ is no longer
a $G$-Brownian motion although $\mathbb{B}_{t}\overset{d}{=}N(\{0\}\times[\underline{\sigma}^{2}t,\overline{\sigma}^{2}t])$
for each $t\geq0$. 
\end{example}

\subsection{The continuity of spatial $G$-white noise}

For each $x\in\mathbb{R}^{d}$, let $\Gamma$ denote the set of hypercubes
in $\mathbb{R}^{d}$ that contains the origin and $t$ as its two
extremal vertices in the sense that 
\begin{equation}
\Gamma=\{(0\wedge x,0\vee x]:=(0\wedge x_{1},0\vee x_{1}]\times\cdots\times(0\wedge x_{d},0\vee x_{d}]:\forall x=(x_{1},\cdots,x_{d})\in\mathbb{R}^{d}\}.\label{Gamma}
\end{equation}
Suppose $(V_{\gamma})_{\gamma\in\Gamma}$ is a random field in a sublinear
expectation space $(\Omega,\mathcal{H},\hat{\mathbb{E}})$. Define
$V_{x}:=V_{(0\wedge x,0\vee x]}$, $x\in\mathbb{R}^{d}$. Then $(V_{x})_{x\in\mathbb{R}^{d}}$
is a random field indexed by $x\in\mathbb{R}^{d}$. In particular,
we derive from a spatial random field $\mathbb{W}_{A},$ $A\in\mathcal{B}(\mathbb{R}^{d})$.

For the spatial white noise $(\mathbb{W}_{A})_{A\in\mathcal{B}(\mathbb{R}^{d})}$,
the corresponding random field $(\mathbb{W}_{x})_{x\in\mathbb{R}^{d}}$
is called $\mathbb{R}^{d}$-indexed spatial white noise. We will see
in this subsection that, just like in the classical situation, this
$\mathbb{R}^{d}$-indexed spatial white noise $(\mathbb{W}_{x})_{x\in\mathbb{R}^{d}}$
has also a quasi-continuous modification in the sense of Definition~\ref{Def-mod}.
%{\color{red}
%\begin{defn}
%Let $I$ be a set of indices. $(X_{x})_{x\in I}$ and $(Y_{x})_{x\in I}$
%be two random fields indexed by $I$. We say that $Y$ is a quasi-modification
%of $X$ if for each $x\in I$, $Y_{x}=X_{x}$, quasi surely. 
%\end{defn}
%same as Definition 12
%}

We recall the following generalized Kolmogorov's continuity criterion
for a random field indexed by $\mathbb{R}^{d}$ (see Denis et al. \cite{DHP}).
\begin{thm}
\label{Kolmogorov criterion1} Let $p>0$ be given and $(V_{x})_{x\in\mathbb{R}^{d}}$
be a random field such that $V_{x}\in{\color{red}\mathbb{L}^{p}(\Omega)}$ for each $x\in\mathbb{R}^{d}.$ Assume
that there exist constants $c,\varepsilon>0$ such that if there exist
positive constants $c,\varepsilon,p$ satisfying 
\[
\hat{\mathbb{E}}[|V_{x}-V_{y}|^{p}]\leq c|x-y|^{d+\varepsilon},\,\,\,\,\text{ for all }\,\,x,y\in\mathbb{R}^{d},
\]
then $(V_{x})_{x\in[0,T]}$ has a continuous quasi-modification $(\tilde{V}_{x})_{x\in\mathbb{R}^{d}}$
such that for each $\alpha\in[0,\varepsilon/p)$, 
\[
\hat{\mathbb{E}}[(\sup\limits _{x\neq y}\frac{|\tilde{V}_{x}-\tilde{V}_{y}|}{|x-y|^{\alpha}})^{p}]<\infty.
\]
Hence, the paths of $\tilde{V}$ are quasi-surely H\"older continuous
of order $\alpha$, $\alpha\in[0,\varepsilon/p)$, in the sense that
there exists a Borel set $N$ in with capacity 0 such that for all
$\omega\in N^{c}$, the mapping $x\rightarrow\tilde{V}_{x}(\omega)$
is Holder continuous of order $\alpha$. 
\end{thm}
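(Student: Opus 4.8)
The plan is to run the classical dyadic chaining argument for Kolmogorov's criterion, but to replace every measure-theoretic step by its counterpart in the capacity framework set up above: the Markov inequality for the capacity $c(\cdot)=\sup_{P\in\mathcal P}P(\cdot)$, the countable sub-additivity of $c$ (which holds since $c(\bigcup_kA_k)\le\sum_k\sup_PP(A_k)=\sum_kc(A_k)$) in place of Borel--Cantelli, and the sub-additivity of $\hat{\mathbb E}$ in place of linearity when summing moment bounds over many increments. Because the local moment hypothesis cannot control the H\"older seminorm uniformly over all of $\mathbb R^d$ (the bound $c|x-y|^{d+\varepsilon}$ blows up for large separations), I would first fix a cube, say $Q=[0,1]^d$, prove the statement there, and then glue the countably many resulting modifications over a covering of $\mathbb R^d$ by unit cubes, using that two quasi-modifications agree quasi-surely on each overlap.

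First I would introduce the dyadic skeleton $\mathbb D_n=\{k2^{-n}:0\le k\le 2^n\}^d\subset Q$, set $\mathbb D=\bigcup_n\mathbb D_n$, and define the maximal neighbouring increment at scale $n$, $M_n:=\max\{|V_x-V_y|:x,y\in\mathbb D_n,\ |x-y|_\infty=2^{-n}\}$. There are $O(2^{nd})$ such pairs, and each satisfies $|x-y|\le\sqrt d\,2^{-n}$, so the hypothesis gives $\hat{\mathbb E}[|V_x-V_y|^p]\le c(\sqrt d)^{d+\varepsilon}2^{-n(d+\varepsilon)}$. By the Markov inequality for $c$, $c(|V_x-V_y|>2^{-n\alpha})\le 2^{n\alpha p}\hat{\mathbb E}[|V_x-V_y|^p]$, and summing over all neighbouring pairs at scale $n$ yields $c(M_n>2^{-n\alpha})\le C\,2^{nd}2^{n\alpha p}2^{-n(d+\varepsilon)}=C\,2^{n(\alpha p-\varepsilon)}$. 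The constraint $\alpha<\varepsilon/p$ is exactly what makes this exponent negative, so $\sum_n c(M_n>2^{-n\alpha})<\infty$, and countable sub-additivity of $c$ shows that, outside a polar set, $M_n\le 2^{-n\alpha}$ for all $n$ large.

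Next I would carry out the chaining estimate on the complement of that polar set: for $x,y\in\mathbb D$ with $|x-y|\le 2^{-N}$ one expands $V_x-V_y$ as a telescoping sum along successive dyadic approximations and bounds it by $C\sum_{n\ge N}M_n\le C'2^{-N\alpha}$, giving $|V_x-V_y|\le C''|x-y|^\alpha$ uniformly on $\mathbb D$. Thus $x\mapsto V_x$ is quasi-surely H\"older continuous on the dense set $\mathbb D$ and extends to a continuous field $\tilde V$ on $Q$; that $\tilde V_x=V_x$ q.s.\ for each fixed $x$ follows from the $\mathbb L^p$-continuity $\hat{\mathbb E}[|V_x-V_y|^p]\le c|x-y|^{d+\varepsilon}\to0$, so $\tilde V$ is a quasi-modification. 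For the seminorm bound, set $H:=\sup_{x\ne y}|\tilde V_x-\tilde V_y|/|x-y|^\alpha$. Using that $\|\cdot\|_{\mathbb L^p}=\hat{\mathbb E}[|\cdot|^p]^{1/p}$ is a genuine norm (triangle inequality from sub-additivity), the chaining bound gives $H\le C\sum_n2^{n\alpha}M_n$, while $\hat{\mathbb E}[M_n^p]\le\sum_{\mathrm{pairs}}\hat{\mathbb E}[|V_x-V_y|^p]\le C2^{-n\varepsilon}$, so $\|M_n\|_{\mathbb L^p}\le C2^{-n\varepsilon/p}$ and hence $\|H\|_{\mathbb L^p}\le C\sum_n2^{n(\alpha-\varepsilon/p)}<\infty$.

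The main obstacle is not any single inequality but the systematic passage from measure-theoretic to capacity tools: with no fixed underlying probability measure one must verify countable sub-additivity of $c$ (to obtain the quasi-sure Borel--Cantelli conclusion), the Markov inequality for $c$, and --- most delicately --- that the supremum over the uncountable family of pairs $(x,y)$ may be exchanged against $\hat{\mathbb E}$. This last point is precisely what the chaining reduction resolves: it dominates the uncountable supremum $H$ by the countable sum $\sum_n2^{n\alpha}M_n$, after which the triangle inequality for $\|\cdot\|_{\mathbb L^p}$ and the sub-additivity of $\hat{\mathbb E}$ in each $\hat{\mathbb E}[M_n^p]$ estimate finish the argument.
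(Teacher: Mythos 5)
The paper does not actually prove Theorem \ref{Kolmogorov criterion1}: it is recalled without proof from Denis--Hu--Peng \cite{DHP}, so the only benchmark is the argument in that reference, and your proposal is essentially that argument --- the classical Kolmogorov--Chentsov dyadic chaining scheme with every probabilistic ingredient replaced by its capacity analogue (Markov inequality for $c$, finite and countable sub-additivity of $c$ in place of the union bound and Borel--Cantelli, sub-additivity of $\hat{\mathbb{E}}$ for the moment estimates, and domination of the uncountable H\"older supremum by the countable sum $\sum_n 2^{n\alpha}M_n$). Two caveats are worth recording. First, your final step invokes the triangle inequality for $\|\cdot\|_{\mathbb{L}^p}$, which is valid only for $p\geq1$, whereas the theorem allows any $p>0$; for $0<p<1$ you should instead combine the pointwise inequality $(\sum_n a_n)^p\leq\sum_n a_n^p$ for nonnegative $a_n$ with the sub-additivity of $\hat{\mathbb{E}}$, which gives $\hat{\mathbb{E}}[H^p]\leq C\sum_n 2^{n\alpha p}\,\hat{\mathbb{E}}[M_n^p]\leq C'\sum_n 2^{n(\alpha p-\varepsilon)}<\infty$ under exactly the same condition $\alpha<\varepsilon/p$. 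Second, your localization to unit cubes followed by gluing is not merely a convenience but a necessary repair of the statement as printed: with the supremum taken over all $x\neq y$ in $\mathbb{R}^d$ the moment bound on the H\"older seminorm is false (already for a classical one-dimensional Brownian motion one has $|B_x-B_0|/x^{\alpha}\to\infty$ a.s.\ as $x\to\infty$ whenever $\alpha<1/2$), and the theorem in \cite{DHP} is indeed stated on a bounded cube; the paper's restatement, which mixes the index sets $[0,T]$ and $\mathbb{R}^d$, inherits this defect. Your glued construction yields a continuous quasi-modification on all of $\mathbb{R}^d$ whose H\"older seminorm has finite $p$-th moment on each compact cube, which is precisely what is needed in the paper's application (Theorem \ref{modification1}).
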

We have the following generalized Kolmogorov's criterion for spatial
white noise $(\mathbb{W}_{x})_{x\in\mathbb{R}^{d}}$
\begin{thm}\label{modification1}
For each $x\in\mathbb{R}^{d}$, let $\mathbb{W}_{x}:=\mathbb{W}_{(0,x]},$
be a $1$-dimensional spatial $G$-white noise on the sublinear expectation
space $(\Omega,\mathcal{H},\hat{\mathbb{E}})$. Then there exists
a continuous quasi-modification $\tilde{\mathbb{W}}$ of $\mathbb{W}$. \end{thm}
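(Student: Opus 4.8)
The plan is to reduce the statement to the generalized Kolmogorov continuity criterion, Theorem~\ref{Kolmogorov criterion1}: it suffices to produce an exponent $p>0$ and constants $c,\varepsilon>0$ with
\[
\hat{\mathbb{E}}[|\mathbb{W}_x-\mathbb{W}_y|^p]\le c\,|x-y|^{d+\varepsilon}
\]
for all $x,y$ in a fixed cube $[-T,T]^d$, and then to patch the resulting local modifications over $T=1,2,\dots$ into a single continuous quasi-modification on all of $\mathbb{R}^d$. So the whole proof comes down to estimating the $p$-th moment of the increment $\mathbb{W}_x-\mathbb{W}_y$.

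First I would identify the increment as a stochastic integral. Since the spatial stochastic integral constructed above is linear and $\mathbb{W}_{(0,x]}=\int_{\mathbb{R}^d}\mathbf{1}_{(0,x]}(z)\,\mathbb{W}(dz)$, we have $\mathbb{W}_x-\mathbb{W}_y=\int_{\mathbb{R}^d} f\,\mathbb{W}(dz)$ with $f=\mathbf{1}_{(0,x]}-\mathbf{1}_{(0,y]}$. By Theorem~\ref{th1} this is a one-dimensional $G$-normal random variable, and the isometry lemma preceding Theorem~\ref{th1} gives
\[
\hat{\mathbb{E}}[(\mathbb{W}_x-\mathbb{W}_y)^2]=\overline{\sigma}^2\|f\|_{L^2}^2=\overline{\sigma}^2\,\lambda\big((0,x]\triangle(0,y]\big),
\]
using that $f^2=|f|=\mathbf{1}_{(0,x]\triangle(0,y]}$. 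Because the one-dimensional $G$-normal variable $\int f\,\mathbb{W}(dz)$ has generating function $q\mapsto G(q\|f\|_{L^2}^2)$, the uniqueness in Proposition~\ref{G-normal existence} shows it is distributed as $\|f\|_{L^2}\,\zeta_0$, where $\zeta_0$ is the standard $G$-normal variable with variance interval $[\underline{\sigma}^2,\overline{\sigma}^2]$. The scaling $\mathbb{W}_x-\mathbb{W}_y\overset{d}{=}\|f\|_{L^2}\,\zeta_0$ then yields
\[
\hat{\mathbb{E}}[|\mathbb{W}_x-\mathbb{W}_y|^p]=C_p\big(\lambda((0,x]\triangle(0,y])\big)^{p/2},\qquad C_p:=\hat{\mathbb{E}}[|\zeta_0|^p]<\infty.
\]

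Next I would bound the symmetric-difference volume by $|x-y|$ locally. Writing $\mathbf{1}_{(0,x]}(z)=\prod_{i=1}^d\mathbf{1}_{I^i_x}(z_i)$ with $I^i_x=(0\wedge x_i,0\vee x_i]$, a telescoping identity gives
\[
\mathbf{1}_{(0,x]}-\mathbf{1}_{(0,y]}=\sum_{k=1}^d\Big(\prod_{i<k}\mathbf{1}_{I^i_y}\Big)\big(\mathbf{1}_{I^k_x}-\mathbf{1}_{I^k_y}\big)\Big(\prod_{i>k}\mathbf{1}_{I^i_x}\Big).
\]
Taking $L^1$ norms, factorizing, and using the one-dimensional identity $\|\mathbf{1}_{I^k_x}-\mathbf{1}_{I^k_y}\|_{L^1}=|x_k-y_k|$, we obtain for $x,y\in[-T,T]^d$
\[
\lambda\big((0,x]\triangle(0,y]\big)\le T^{d-1}\sum_{k=1}^d|x_k-y_k|\le \sqrt{d}\,T^{d-1}|x-y|.
\]
Combining with the moment formula, $\hat{\mathbb{E}}[|\mathbb{W}_x-\mathbb{W}_y|^p]\le c(T,d,p)\,|x-y|^{p/2}$ on $[-T,T]^d$. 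Choosing any $p>2d$ makes $p/2=d+\varepsilon$ with $\varepsilon=\tfrac{p}{2}-d>0$, so the criterion applies on each cube and furnishes a continuous quasi-modification there; patching over $T\in\mathbb{N}$, with the local modifications coinciding q.s.\ on overlaps by continuity, produces the global modification $\tilde{\mathbb{W}}$.

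The hard part will be the geometric volume estimate together with its merely local character: the Lipschitz constant of $x\mapsto\lambda((0,x]\triangle(0,y])$ grows with the size of the boxes, so no single global bound is available and one must argue cube by cube and then verify that the pieced-together field is a genuine quasi-modification. The remaining point is only to confirm the $G$-normal scaling $\hat{\mathbb{E}}[|\zeta|^p]=\|f\|^p_{L^2}\,\hat{\mathbb{E}}[|\zeta_0|^p]$ and the finiteness of $C_p$; both follow from the uniqueness of $G$-normal distributions and the polynomial moment growth of a bounded-variance $G$-normal variable, so no estimate beyond those of Section~2 is required.
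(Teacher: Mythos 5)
Your proposal is correct and follows essentially the same route as the paper: show the increment $\mathbb{W}_x-\mathbb{W}_y$ is one-dimensional $G$-normal with $\hat{\mathbb{E}}[|\mathbb{W}_x-\mathbb{W}_y|^p]=C_p\,\lambda\bigl((0,x]\triangle(0,y]\bigr)^{p/2}$, bound the symmetric-difference volume linearly in $|x-y|$, and invoke Theorem~\ref{Kolmogorov criterion1}. The paper carries this out only for $d=2$ with $p=6$, computing $\hat{\mathbb{E}}[|\mathbb{W}_y-\mathbb{W}_x|^6]=15\overline{\sigma}^6|y_1y_2-x_1x_2|^3$ and then using $y_1y_2-x_1x_2\le\sqrt{2}(y_1\vee y_2)|y-x|$, whereas you work in arbitrary dimension with any $p>2d$, your telescoping identity being the general-$d$ form of that two-dimensional inequality. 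The one point where you are genuinely more careful: the paper's constant $(y_1\vee y_2)$ depends on the point, so its moment bound is only local, yet it applies the continuity criterion (whose hypothesis asks for a uniform constant over all $x,y\in\mathbb{R}^d$) without comment; your cube-by-cube argument on $[-T,T]^d$ followed by patching the local modifications over $T\in\mathbb{N}$ supplies exactly the step the paper glosses over, and your routing of the second-moment identity through the stochastic-integral isometry rather than directly through the generating function (\ref{g-q}) is an equivalent, equally valid bookkeeping choice.
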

\begin{proof}
For notational simplicity, we only prove the case that $d=2$. For
each $x=(x_{1},x_{2}), y=(y_{1},y_{2})\in\mathbb{R}^{2}$, without
loss of generality, assume that $0\leq x_{i}\leq y_{i}$, $i=1,2$.
Then, by (\ref{g-q}), 
\[
\hat{\mathbb{E}}[|\mathbb{W}_{y}-\mathbb{W}_{x}|^{2}]=\overline{\sigma}^{2}\lambda_{(0,y]/(0,x]}=\overline{\sigma}^{2}(y_{1}y_{2}-x_{1}x_{2}).
\]
Thus $\mathbb{W}_{y}-\mathbb{W}_{x}$ is $G$-normally distributed
and $\mathbb{W}_{y}-\mathbb{W}_{x}\overset{d}{=}N(\{0\}\times[\underline{\sigma}^{2}(y_{1}y_{2}-x_{1}x_{2}),\overline{\sigma}^{2}(y_{1}y_{2}-x_{1}x_{2})]$.
It follows that 
\[
\hat{\mathbb{E}}[|\mathbb{W}_{y}-\mathbb{W}_{x}|^{6}]=15\overline{\sigma}^{6}|y_{1}y_{2}-x_{1}x_{2}|^{3}.
\]
Since 
\[
y_{1}y_{2}-x_{1}x_{2}\leq(y_{1}\vee y_{2})(|y_{1}-x_{1}|+|y_{2}-x_{2}|)\leq\sqrt{2}(y_{1}\vee y_{2})|y-x|,
\]
we have 
\[
\hat{\mathbb{E}}[|\mathbb{W}_{y}-\mathbb{W}_{x}|^{6}]\leq30\sqrt{2}\overline{\sigma}^{6}(y_{1}\vee y_{2})|y-x|^{3}.
\]
We then apply Theorem \ref{Kolmogorov criterion1} and obtain that
$\mathbb{W}$ has a continuous modification $\tilde{\mathbb{W}}$. 
\end{proof}

\section{Spatial and temporal white noise under G-sublinear expectation}

\subsection{Basic notions and main properties}

In this section, we study a new type spatial-temporal white noise
which is a random field defined on the following index set 
\[
\Gamma=\{[s,t)\times A:0\leq s\leq t<\infty,\,\,A\in\mathcal{B}_{0}(\mathbb{R}^{d})\}.
\]
Specifically, we have 
\begin{defn}
\label{tx-Gwhitenoise} A random field $\{\mathbf{W}([s,t)\times A)\}_{([s,t)\times A)\in\Gamma}$
on a sublinear expectation space $(\Omega,\mathcal{H},\hat{\mathbb{E}})$
is called a 1-dimensional spatial-temporal $G$-white noise if it
satisfies the following conditions: \end{defn}
\begin{itemize}
\item [{(i)}] For each fixed $[s,t)$, the ramdom field $\{\mathbf{W}([s,t)\times A)\}_{{A\in\mathcal{B}_{0}(\mathbb{R}^{d})}}$
is a $1$-dimensional spatial white-noise that has the same finite-dimensional
distributions as \\ $(\sqrt{t-s}\mathbb{W}_{A}){}_{{A\in\mathcal{B}_{0}(\mathbb{R}^{d})}}$; 
\item [{(ii)}] For any $r\leq s\leq t$, $A\in\mathcal{B}_{0}(\mathbb{R}^{d})$,
$\mathbf{W}([r,s)\times A)+\mathbf{W}([s,t)\times A)=\mathbf{W}([r,t)\times A)$; 
\item [{(iii)}] $\mathbf{W}([s,t)\times A)$ is independent of $\left(\mathbf{W}([s_{1},t_{1})\times A_{1}),\cdots,\mathbf{W}([s_{n},t_{n})\times A_{n})\right)$,
if $t_{i}<s$ and $A_{i}\in\mathcal{B}_{0}(\mathbb{R}^{d})$, for
$i=1,\cdots,n$, 
\end{itemize}
where $(\mathbb{W}_{A})_{{A\in\mathcal{B}_{0}(\mathbb{R}^{d})}}$
is a 1-dimensional $G$-white noise. 
\begin{rem}
It is important to mention that $\{\mathbf{W}([s,t)\times A),0\leq s\leq t<\infty,A\in\mathcal{B}_{0}(\mathbb{R}^{d})\}$
is no longer a $G$-Gaussian random field. 
\end{rem}
We now present the existence of the spatial-temporal G-white noise
and the construction of the corresponding expectation and conditional
expectation.

Suppose $\Omega=(\mathbb{R})^{\Gamma}$ and for each $\omega\in\Omega$,
define the canonical process $(\mathbf{W}_{\gamma})_{\gamma\in\Gamma}$
by 
\[
\mathbf{W}([s,t)\times A)(\omega)=\omega([s,t)\times A),\ \forall0\leq s\leq t<\infty,\ A\in\mathcal{B}_{0}(\mathbb{R}^{d}).
\]
For each $T\geq0$, set $\mathcal{F}_{T}=\sigma\{\mathbf{W}([s,t)\times A),0\leq s\leq t\leq T,A\in\mathcal{B}(\mathbb{R}^{d})\}$,
$\mathcal{F}=\bigvee\limits _{T\geq0}\mathcal{F}_{T}$, and 
\begin{align*}
L_{ip}(\mathcal{F}_{T})= & \{\varphi(\mathbf{W}([s_{1},t_{1})\times A_{1}),\cdots,\mathbf{W}([s_{n},t_{n})\times A_{n})),\,\,\text{ for any }n\in\mathbb{N},\,\,s_{i}\leq t_{i}\leq T,\,
\\ 
& i=1,\cdots,n,\,\,\,
 A_{1},\cdots,A_{n}\in\mathcal{B}_{0}(\mathbb{R}^{d}),\varphi\in C_{l.Lip}(\mathbb{R}^{n})\}.
\end{align*}
We also set $L_{ip}(\mathcal{F})=\cup_{n=1}^{\infty}L_{ip}(\mathcal{F}_{n})$.
For each $X\in L_{ip}(\mathcal{F})$, without loss of generality,
we assume $X$ has the form 
\begin{align*}
X=&\varphi(\mathbf{W}([0,t_{1})\times A_{1}),\cdots,\mathbf{W}([0,t_{1})\times A_{m}),\cdots,\mathbf{W}([t_{n-1},t_{n})\times A_{1}),\cdots,\\
&\ \ \ \mathbf{W}([t_{n-1},t_{n})\times A_{m})),
\end{align*}
where $0=t_{0}\leq t_{1}<\cdots<t_{n}<\infty$, $\{A_{1},\cdots,A_{m}\}\subset\mathcal{B}_{0}(\mathbb{R}^{d})$
are mutually disjoint, and $\varphi\in C_{l.Lip}(\mathbb{R}^{m\times n})$.
Then we can define the sublinear expectation for $X$ in the following
procedure.

Let $\{G_{A_{1},\cdots,A_{m}}:\forall m\in\mathbb{N},0\leq s<t<\infty,A_{1},\cdots,A_{m}\in\mathcal{B}_{0}(\mathbb{R}^{d})\}$
be a family of monotonic and sublinear functions defined as (\ref{g-q}).
By Proposition \ref{G-normal existence}, there exists a sequence
of $m$-dimensional $G$-normally distributed random vectors $\{\xi_{1},\cdots,\xi_{n}\}$,
$\xi_{i}=(\xi_{i}^{(1)},\cdots,\xi_{i}^{(m)})$, $1\leq i\leq n$,
on a sublinear expectation $(\tilde{\Omega},\tilde{\mathcal{H}},\tilde{\mathbb{E}})$
such that $\xi_{i+1}$ is independent of $(\xi_{1},\cdots,\xi_{i})$
for each $i=1,2,\cdots,n-1$, and 
\[
G_{A_{1},\cdots,A_{m}}(Q)=\frac{1}{2}\tilde{\mathbb{E}}[\langle Q\xi_{i},\xi_{i}\rangle]=G(\sum_{i=1}^{n}q_{ii}\lambda_{A_{i}}),\forall Q\in\mathbb{S}(m),
\]
where $G(q)=\frac{\overline{\sigma}^{2}}{2}q^{+}-\frac{\underline{\sigma}^{2}}{2}q^{-}$,
$q\in\mathbb{R}$.

Define 
\begin{align*}
\hat{\mathbb{E}}[X] & =\hat{\mathbb{E}}[\varphi(\mathbf{W}([0,t_{1})\times A_{1}),\cdots,\mathbf{W}([0,t_{1})\times A_{m}),\cdots,\mathbf{W}([t_{n-1},t_{n})\times A_{1}),\cdots,\\
& \ \ \ \ \ \ \ \ \ \  \mathbf{W}([t_{n-1},t_{n})\times A_{m}))]\\
 & =\tilde{\mathbb{E}}[\varphi(\sqrt{t_{1}}\xi_{1}^{(1)},\cdots,\sqrt{t_{1}}\xi_{1}^{(m)},\cdots,\sqrt{(t_{n}-t_{n-1})}\xi_{n}^{(1)},\cdots,\sqrt{(t_{n}-t_{n-1})}\xi_{n}^{(m)})].
\end{align*}
and the related conditional expectation of $X$ under $\mathcal{F}_{t}$, $t_{j}\leq t<t_{j+1}$, denoted by $\hat{\mathbb{E}}[X|\mathcal{F}_{t}] $, is defined by
\begin{align*}
& \hat{\mathbb{E}}[\varphi(\mathbf{W}([0,t_{1})\times A_{1}),\cdots,\mathbf{W}([0,t_{1})\times A_{m}),\cdots,\mathbf{W}([t_{n-1},t_{n})\times A_{1}),\cdots,\\
&\ \ \ \ \ \ \ \mathbf{W}([t_{n-1},t_{n})\times A_{m}))|\mathcal{F}_{t}]\\
 & =\psi(\mathbf{W}([0,t_{1})\times A_{1}),\cdots,\mathbf{W}([0,t_{1})\times A_{m}),\cdots,\mathbf{W}([t_{j-1},t_{j})\times A_{1}),\cdots,\\
 &\ \ \ \ \ \ \ \mathbf{W}([t_{j-1},t_{j})\times A_{m})).
\end{align*}
where 
\begin{align*}
\psi(x_{11},\cdots,x_{jm})  =\tilde{\mathbb{E}}[\varphi(x_{11},&\cdots,x_{jm},\sqrt{t_{j+1}-t_{j}}\xi_{j+1}^{(1)},\cdots,\sqrt{t_{j+1}-t_{j}}\xi_{j+1}^{(m)}, \\ & \cdots,\sqrt{t_{n}-t_{n-1}}\xi_{n}^{(1)},
  \cdots,\sqrt{t_{n}-t_{n-1}}\xi_{n}^{(m)})].
\end{align*}
It is easy to check that $\hat{\mathbb{E}}[\cdot]$ defines a sublinear
expectation on $L_{ip}(\mathcal{F})$ and the canonical process $(\mathbf{W}_{\gamma})_{\gamma\in\Gamma}$
is a $1$-dimensional spatial-temporal white noise on $(\Omega,L_{ip}(\mathcal{F}),\hat{\mathbb{E}}).$

For each $p\geq1$, $T\geq0$, we denote by $\mathbf{L}_{G}^{p}(\mathbf{W}_{[0,T]})(resp.\ \mathbf{L}_{G}^{p}(\mathbf{W}))$
the completion of $L_{ip}(\mathcal{F}_{T})(resp.\ L_{ip}(\mathcal{F}))$
under the form $\|X\|_{p}:=(\hat{\mathbb{E}}[|X|^{p}])^{1/p}$. The
conditional expectation $\hat{\mathbb{E}}[\cdot\left|\mathcal{F}_{t}\right]:L_{ip}(\mathcal{F})\rightarrow L_{ip}(\mathcal{F}_{t})$
is a continuous mapping under $\|\cdot\|_{p}$ and can be extended
continuously to the mapping $\mathbf{L}_{G}^{p}(\mathbf{W})\rightarrow\mathbf{L}_{G}^{p}(\mathbf{W}_{[0,t]})$
by 
\[
|\hat{\mathbb{E}}[X\left|\mathcal{F}_{t}\right]-\hat{\mathbb{E}}[Y\left|\mathcal{F}_{t}\right]|\leq\hat{\mathbb{E}}[|X-Y|\left|\mathcal{F}_{t}\right].
\]
It is easy to verify the conditional expectation $\hat{\mathbb{E}}[\cdot|\mathcal{F}_{t}]$
satisfies the following properties. We omit the proof here. 
\begin{prop}
The conditional expectation $\hat{\mathbb{E}}[\cdot\left|\mathcal{F}_{t}\right]:\mathbf{L}_{G}^{p}(\mathbf{W})\rightarrow\mathbf{L}_{G}^{p}(\mathbf{W}_{[0,t]})$
satisfies the following properties: for any $X,Y\in\mathbf{L}_{G}^{p}(\mathbf{W})$,
$\eta\in\mathbf{L}_{G}^{p}(\mathbf{W}_{[0,t]})$, \end{prop}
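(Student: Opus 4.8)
The plan is to prove each listed property first for cylinder random variables $X\in L_{ip}(\mathcal{F})$, where $\hat{\mathbb{E}}[\,\cdot\,|\mathcal{F}_t]$ is given by the explicit formula through the function $\psi$ and the auxiliary sublinear expectation $\tilde{\mathbb{E}}$ acting on the independent blocks $\xi_1,\dots,\xi_n$, and then to pass to general $X\in\mathbf{L}_{G}^{p}(\mathbf{W})$ by the already-stated contraction estimate $|\hat{\mathbb{E}}[X|\mathcal{F}_t]-\hat{\mathbb{E}}[Y|\mathcal{F}_t]|\le\hat{\mathbb{E}}[|X-Y|\,|\mathcal{F}_t]$ together with the density of $L_{ip}(\mathcal{F})$ in $\mathbf{L}_{G}^{p}(\mathbf{W})$. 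The key observation is that, after refining the time partition and choosing a common disjoint family $\{A_1,\dots,A_m\}$, every algebraic identity reduces to a corresponding property of the genuine sublinear expectation $\tilde{\mathbb{E}}$ evaluated with the first $j$ coordinate blocks frozen as the arguments $x_{11},\dots,x_{jm}$ of $\psi$.

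For monotonicity and for the $\mathcal{F}_t$-linearity $\hat{\mathbb{E}}[X+\eta|\mathcal{F}_t]=\hat{\mathbb{E}}[X|\mathcal{F}_t]+\eta$ with $\eta\in\mathbf{L}_{G}^{p}(\mathbf{W}_{[0,t]})$, I would note that $\eta$ involves only blocks up to time $t$, hence enters $\psi$ solely through the frozen arguments; monotonicity of $\psi$ in those arguments is inherited from monotonicity of $\tilde{\mathbb{E}}$, and the additive $\eta$ passes through $\tilde{\mathbb{E}}$ by constant preservation. The relation $\hat{\mathbb{E}}[\eta X|\mathcal{F}_t]=\eta^{+}\hat{\mathbb{E}}[X|\mathcal{F}_t]+\eta^{-}\hat{\mathbb{E}}[-X|\mathcal{F}_t]$ for bounded $\mathcal{F}_t$-measurable $\eta$ follows, for simple $\eta$, from positive homogeneity of $\tilde{\mathbb{E}}$ applied separately where $\eta\ge0$ and where $\eta<0$, and extends by approximation. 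Sub-additivity $\hat{\mathbb{E}}[X+Y|\mathcal{F}_t]\le\hat{\mathbb{E}}[X|\mathcal{F}_t]+\hat{\mathbb{E}}[Y|\mathcal{F}_t]$ is obtained after expressing $X$ and $Y$ through a common partition and common blocks and invoking sub-additivity of $\tilde{\mathbb{E}}$.

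The substantial step is the tower (consistency) property $\hat{\mathbb{E}}[\hat{\mathbb{E}}[X|\mathcal{F}_t]|\mathcal{F}_s]=\hat{\mathbb{E}}[X|\mathcal{F}_{s\wedge t}]$, and this is where I expect the main obstacle. After refining the partition so that both $s$ and $t$ are partition points, the inner conditioning integrates out the blocks after $t$ and the outer conditioning integrates out the blocks between $s$ and $t$; the definition of independence under $\tilde{\mathbb{E}}$, namely $\tilde{\mathbb{E}}[\varphi(X,Y)]=\tilde{\mathbb{E}}[\tilde{\mathbb{E}}[\varphi(x,Y)]_{x=X}]$, is precisely what collapses these two successive partial evaluations into a single evaluation over all blocks after $s$. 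The delicacy is that independence is one-directional in this nonlinear framework, so the order of integration cannot be reversed: the argument must integrate the blocks from the latest inward, respecting the temporal ordering, and at each stage one must verify that the frozen-variable function remains in $C_{l.Lip}$ so that the independence identity continues to apply.

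Finally, specializing $s=0$ in the tower property yields $\hat{\mathbb{E}}[\hat{\mathbb{E}}[X|\mathcal{F}_t]]=\hat{\mathbb{E}}[X]$. Each identity, once established on $L_{ip}(\mathcal{F})$, extends to $\mathbf{L}_{G}^{p}(\mathbf{W})$ since both sides are continuous in $\|\cdot\|_p$: monotonicity and sub-additivity survive under $\mathbb{L}^p$-limits via the contraction estimate, while the $\eta$-linearity extends by first taking bounded continuous $\eta$ and then, if needed, truncating. This completes the plan.
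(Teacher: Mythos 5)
The paper gives no proof of this proposition at all (it says the verification is easy and omits it), and your plan supplies exactly the argument that the paper's construction sets up: establish each property for cylinder functionals in $L_{ip}(\mathcal{F})$ via the explicit $\psi$--$\tilde{\mathbb{E}}$ formula, using monotonicity, sub-additivity, positive homogeneity and translation invariance of $\tilde{\mathbb{E}}$ for (i)--(iv) and the iterated one-directional independence of the blocks $\xi_{1},\dots,\xi_{n}$ (integrated from the last block inward) for the tower property, then pass to $\mathbf{L}_{G}^{p}(\mathbf{W})$ by density and the contraction estimate $|\hat{\mathbb{E}}[X\,|\mathcal{F}_{t}]-\hat{\mathbb{E}}[Y\,|\mathcal{F}_{t}]|\le\hat{\mathbb{E}}[|X-Y|\,|\mathcal{F}_{t}]$. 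This is the standard treatment of conditional $G$-expectation in the work of Peng that the paper cites, so your proposal is correct and is essentially the intended approach.
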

\begin{description}
\item [{(i)}] $\hat{\mathbb{E}}[X\left|\mathcal{F}_{t}\right]\geq\hat{\mathbb{E}}[Y\left|\mathcal{F}_{t}\right]$\ for\ $X\geq Y$. 
\item [{(ii)}] $\hat{\mathbb{E}}[\eta\left|\mathcal{F}_{t}\right]=\eta$. 
\item [{(iii)}] $\mathbb{\hat{E}}[X+Y\left|\mathcal{F}_{t}\right]\leq\mathbb{\hat{E}}[X\left|\mathcal{F}_{t}\right]+\mathbb{\hat{E}}[Y\left|\mathcal{F}_{t}\right]$. 
\item [{(iv)}] $\mathbb{\hat{E}}[\eta X\left|\mathcal{F}_{t}\right]=\eta^{+}\mathbb{\hat{E}}[X\left|\mathcal{F}_{t}\right]+\eta^{-}\mathbb{\hat{E}}[-X\left|\mathcal{F}_{t}\right]$. 
\item [{(v)}] $\hat{\mathbb{E}}[\hat{\mathbb{E}}[X\left|\mathcal{F}_{t}\right]|\mathcal{F}_{s}]=\hat{\mathbb{E}}[X|\mathcal{F}_{t\wedge s}]$. \end{description}
\begin{prop}
For each $X,Y\in\mathbf{L}_{G}^{p}(\mathbf{W}_{[0,T]})$, $t\leq T$
, $\mathbb{\hat{E}}[Y\left|\mathcal{F}_{t}\right]=-\mathbb{\hat{E}}[-Y\left|\mathcal{F}_{t}\right]$.
Then 
\[
\mathbb{\hat{E}}[X+\alpha Y\left|\mathcal{F}_{t}\right]=\mathbb{\hat{E}}[X\left|\mathcal{F}_{t}\right]+\alpha\mathbb{\hat{E}}[Y\left|\mathcal{F}_{t}\right],\ \alpha\in\mathbb{R}.
\]

\end{prop}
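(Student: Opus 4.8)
The plan is to prove the identity as a two-sided inequality, in exact analogy with the unconditional Lemma~\ref{pmLemma}: the hypothesis $\hat{\mathbb{E}}[Y|\mathcal{F}_t]=-\hat{\mathbb{E}}[-Y|\mathcal{F}_t]$ says that $Y$ carries no conditional ambiguity, and this is precisely what upgrades the merely sub-additive conditional expectation to a map that is \emph{linear in the $Y$-direction}. All the ingredients are the algebraic properties of $\hat{\mathbb{E}}[\cdot|\mathcal{F}_t]$ already established in the preceding proposition, so no new analytic machinery is needed.

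First I would record the full scalar homogeneity in the $Y$-direction. Taking the $\mathcal{F}_t$-measurable factor to be the constant $\alpha$ in property~(iv) of the preceding proposition gives
\[
\hat{\mathbb{E}}[\alpha Y|\mathcal{F}_t]=\alpha^{+}\hat{\mathbb{E}}[Y|\mathcal{F}_t]+\alpha^{-}\hat{\mathbb{E}}[-Y|\mathcal{F}_t].
\]
Substituting the hypothesis in the form $\hat{\mathbb{E}}[-Y|\mathcal{F}_t]=-\hat{\mathbb{E}}[Y|\mathcal{F}_t]$ and using $\alpha^{+}-\alpha^{-}=\alpha$ yields $\hat{\mathbb{E}}[\alpha Y|\mathcal{F}_t]=\alpha\hat{\mathbb{E}}[Y|\mathcal{F}_t]$ for \emph{every} real $\alpha$; the same computation with $-\alpha$ in place of $\alpha$ gives $\hat{\mathbb{E}}[-\alpha Y|\mathcal{F}_t]=-\alpha\hat{\mathbb{E}}[Y|\mathcal{F}_t]$. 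With this in hand, the upper bound is immediate from the sub-additivity property~(iii):
\[
\hat{\mathbb{E}}[X+\alpha Y|\mathcal{F}_t]\le\hat{\mathbb{E}}[X|\mathcal{F}_t]+\hat{\mathbb{E}}[\alpha Y|\mathcal{F}_t]=\hat{\mathbb{E}}[X|\mathcal{F}_t]+\alpha\hat{\mathbb{E}}[Y|\mathcal{F}_t].
\]
For the reverse inequality I would decompose $X=(X+\alpha Y)+(-\alpha Y)$ and apply sub-additivity once more:
\[
\hat{\mathbb{E}}[X|\mathcal{F}_t]\le\hat{\mathbb{E}}[X+\alpha Y|\mathcal{F}_t]+\hat{\mathbb{E}}[-\alpha Y|\mathcal{F}_t]=\hat{\mathbb{E}}[X+\alpha Y|\mathcal{F}_t]-\alpha\hat{\mathbb{E}}[Y|\mathcal{F}_t],
\]
which rearranges to $\hat{\mathbb{E}}[X|\mathcal{F}_t]+\alpha\hat{\mathbb{E}}[Y|\mathcal{F}_t]\le\hat{\mathbb{E}}[X+\alpha Y|\mathcal{F}_t]$. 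Combining the two inequalities gives the asserted equality.

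I do not expect a genuine obstacle here; the only step that does real work is the first one. Sub-additivity alone yields both displayed inequalities only up to the scalar term $\hat{\mathbb{E}}[\pm\alpha Y|\mathcal{F}_t]$, and without extra information these would involve $\hat{\mathbb{E}}[-Y|\mathcal{F}_t]$ and so fail to collapse. It is exactly the symmetry hypothesis that identifies $\hat{\mathbb{E}}[-Y|\mathcal{F}_t]$ with $-\hat{\mathbb{E}}[Y|\mathcal{F}_t]$, turning the two one-sided estimates into a matching pair. Since $\mathbf{L}_{G}^{p}(\mathbf{W}_{[0,T]})\subset\mathbf{L}_{G}^{p}(\mathbf{W})$ and properties~(iii)--(iv) have already been extended by continuity to these completions, the argument applies verbatim to arbitrary $X,Y\in\mathbf{L}_{G}^{p}(\mathbf{W}_{[0,T]})$, so no separate approximation step is required.
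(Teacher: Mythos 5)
Your proof is correct. The paper states this proposition without proof (the preceding remark says the properties of the conditional expectation are easy to verify and the proof is omitted), and your argument --- deriving full scalar homogeneity $\hat{\mathbb{E}}[\alpha Y|\mathcal{F}_{t}]=\alpha\hat{\mathbb{E}}[Y|\mathcal{F}_{t}]$ from property (iv) with constant $\eta=\alpha$ together with the symmetry hypothesis, then squeezing $\hat{\mathbb{E}}[X+\alpha Y|\mathcal{F}_{t}]$ between the two sub-additivity inequalities obtained from $X+\alpha Y$ and from the decomposition $X=(X+\alpha Y)+(-\alpha Y)$ --- is exactly the standard argument for this fact in Peng's $G$-expectation framework, i.e.\ the intended one.
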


\subsection{Stochastic integrals with respect to spatial-temporal white noise}

In the following, we define the stochastic integral with respect to
the spatial-temporal G-white noise $\mathbf{W}([s,t)\times A),0\leq s<t\leq T,A\in\mathcal{B}_{0}(\mathbb{R}^{d}).$

Firstly, let $\mathbf{M}^{0}([0,T]\times\mathbb{R}^{d})$ be the collection
of simple processes with the form: 
\[
f(s,x;\omega)=\sum\limits _{i=0}^{n-1}\sum_{j=1}^{m}X_{ij}(\omega)\textbf{1}_{A_{j}}(x)\textbf{1}_{[t_{i},t_{i+1})}(s),
\]
where $X_{ij}\in L_{ip}(\mathcal{F}_{t_{i}})$, $i=0,\cdots,n-1$,
$j=1,\cdots,m$, $0=t_{0}<t_{1}<\cdots<t_{n}=T$, and $\{A_{j}\}_{j=1}^{m}$
is a space partition of $\mathcal{B}_{0}(\mathbb{R}^{d})$, namely,
\[
A_{j}\in\mathcal{B}_{0}(\mathbb{R}^{d}),\,\,\,\bigcup_{j=1}^{m}A_{j}=\mathbb{R}^{d},\,\,\,and\,\,\,A_{j}\cap A_{k}=\emptyset,\,\,\text{ if }j\neq k,\ j,k=1,\cdots,m.
\]
The Bohner's integral of $f\in\mathbf{M}^{0}([0,T]\times\mathbb{R}^{d})$
is defined by 
\begin{align*}
I_{B}(f)=\int_{\mathbb{R}^{d}}\int_{0}^{T}f(s,x)dsdx= & \int_{0}^{T}\int_{\mathbb{R}^{d}}f(s,x)dxds:=\sum\limits _{i=0}^{n-1}\sum_{j=1}^{m}X_{ij}(t_{i+1}-t_{i})\lambda_{A_{j}},\,\,\,\forall t\in[0,T].%\quadB\in\mathcal{B}(\mathbb{R}^{d}).
\end{align*}
It is clear that $I_{B}:\mathbf{M}^{0}([0,T]\times\mathbb{R}^{d})\mapsto L_{ip}(\mathcal{F}_{T})$
is a linear mapping. For a given $p\geq1$ and $f\in\mathbf{M}^{0}([0,T]\times\mathbb{R}^{d})$,
let 
\[
\|f\|_{\mathbf{M}^{p}}=\left(\hat{\mathbb{E}}[\int_{0}^{T}\int_{\mathbb{R}^{d}}|f(s,x)|^{p}dsdx]\right)^{\frac{1}{p}}=\left\{ \hat{\mathbb{E}}\left[\sum_{i=0}^{n-1}\sum_{j=1}^{m}X_{ij}^{p}(t_{i+1}-t_{i})\lambda_{A_{j}}\right]\right\} ^{1/p}.
\]
Then $\|f\|_{\mathbf{M}^{p}}$ is a Banach norm on $\mathbf{M}^{0}([0,T]\times\mathbb{R}^{d})$
and the completion of $\mathbf{M}^{0}([0,T]\times\mathbb{R}^{d})$
under the norm $\|\cdot\|_{\mathbf{M}^{p}}$, denoted by $\mathbf{M}_{G}^{p}([0,T]\times\mathbb{R}^{d})$,
is a Banach space.

The stochastic integral with respect to the spatial-temporal indexed
white noise $\mathbf{W}$ from $\mathbf{M}^{0}([0,T]\times\mathbb{R}^{d})$
to $L_{ip}(\mathcal{F}_{T})$ can be defined as follows: 
\begin{equation}\label{Def-int}
\int_{0}^{T}\int_{\mathbb{R}^{d}}f(s,x)\mathbf{W}(ds,dx):=\sum\limits _{i=0}^{n-1}\sum_{j=1}^{m}X_{ij}\mathbf{W}([t_{i},t_{i+1})\times A_{j}).
\end{equation}

\begin{lem}
\label{pmW} For any $0\leq s\leq t<\infty$ and $\xi\in\mathbf{L}_{G}^{p}(\mathbf{W}_{[0,s]})$,
we have 
\[
\hat{\mathbb{E}}[\xi\mathbf{W}([s,t)\times A)]=0,\quad\forall A\in\mathcal{B}_{0}(\mathbb{R}),
\]
and 
\[
\hat{\mathbb{E}}[\xi\mathbf{W}([s,t)\times A)(\mathbf{W}([s,t)\times\bar{A})]=0,\qquad\forall A,\bar{A}\in\mathcal{B}_{0}(\mathbb{R})\mbox{ such that}\mbox{ }A\cap\bar{A}=\emptyset.
\]
\end{lem}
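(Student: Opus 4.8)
The plan is to reduce both identities to two elementary facts: that the ``new'' increment $\mathbf{W}([s,t)\times A)$, and the product of two such increments over disjoint spatial sets, have symmetric mean zero, and that each is independent of the past to which $\xi$ belongs. First I would record the centering facts. By condition (i) of Definition~\ref{tx-Gwhitenoise}, for fixed $[s,t)$ the field $\{\mathbf{W}([s,t)\times A)\}_A$ is distributed as $\{\sqrt{t-s}\,\mathbb{W}_A\}_A$; since $\mathbb{W}_A$ is $G$-normal and hence centered,
\[
\hat{\mathbb{E}}[\mathbf{W}([s,t)\times A)]=\hat{\mathbb{E}}[-\mathbf{W}([s,t)\times A)]=\sqrt{t-s}\,\hat{\mathbb{E}}[\mathbb{W}_A]=0.
\]
Likewise, for $A\cap\bar A=\emptyset$ the pair $(\mathbf{W}([s,t)\times A),\mathbf{W}([s,t)\times\bar A))$ is distributed as $(\sqrt{t-s}\,\mathbb{W}_A,\sqrt{t-s}\,\mathbb{W}_{\bar A})$, so by relation~(\ref{Gwn1}),
\[
\hat{\mathbb{E}}[\mathbf{W}([s,t)\times A)\mathbf{W}([s,t)\times\bar A)]=(t-s)\,\hat{\mathbb{E}}[\mathbb{W}_A\mathbb{W}_{\bar A}]=0,
\]
and the same computation with a minus sign gives $\hat{\mathbb{E}}[-\mathbf{W}([s,t)\times A)\mathbf{W}([s,t)\times\bar A)]=0$. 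Thus in both parts of the lemma the relevant factor $Z$ (the single increment, respectively the product) satisfies $\hat{\mathbb{E}}[Z]=\hat{\mathbb{E}}[-Z]=0$.

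Next I would prove the identities for $\xi\in L_{ip}(\mathcal{F}_s)$. By condition (iii) of Definition~\ref{tx-Gwhitenoise} the increment on $[s,t)$ is independent of every random variable built from increments with time-support in $[0,s]$, so $Z$ is independent of $\xi$. Writing $\xi=\phi(\mathbf{W}_{\gamma_1},\dots,\mathbf{W}_{\gamma_k})$ and observing that the products $(x,y)\mapsto xy$ and $(x,y_1,y_2)\mapsto xy_1y_2$ belong to $C_{l.Lip}$, the definition of independence yields
\[
\hat{\mathbb{E}}[\xi Z]=\hat{\mathbb{E}}\big[\,\hat{\mathbb{E}}[cZ]_{c=\xi}\,\big].
\]
For each constant $c$, positive homogeneity gives $\hat{\mathbb{E}}[cZ]=c^{+}\hat{\mathbb{E}}[Z]+c^{-}\hat{\mathbb{E}}[-Z]=0$, whence $\hat{\mathbb{E}}[\xi Z]=\hat{\mathbb{E}}[0]=0$. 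Equivalently, one may condition on $\mathcal{F}_s$ and invoke property~(iv) of the conditional expectation together with $\hat{\mathbb{E}}[\pm Z\,|\,\mathcal{F}_s]=\hat{\mathbb{E}}[\pm Z]=0$.

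Finally I would extend to $\xi\in\mathbf{L}_G^p(\mathbf{W}_{[0,s]})$ by density. Choosing $\xi_n\in L_{ip}(\mathcal{F}_s)$ with $\|\xi_n-\xi\|_p\to0$ and using that $Z$, being a $G$-normal variable (respectively a product of two), has finite moments of every order, the sublinear H\"older inequality gives $\hat{\mathbb{E}}[|\xi_n-\xi|\,|Z|]\le\|\xi_n-\xi\|_p\,\|Z\|_q\to0$ for the conjugate exponent $q$; passing to the limit in $\hat{\mathbb{E}}[\xi_n Z]=0$ then yields the claim. I expect this last approximation to be the only delicate point: one must ensure the products $\xi Z$ lie in $\mathbf{L}_G^1$ and that $q<\infty$, which is precisely where the all-order integrability of the $G$-normal increment $Z$ is used. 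Everything preceding it is a direct unwinding of the definitions.
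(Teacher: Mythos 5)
Your proof is correct and follows essentially the same route as the paper's: the paper likewise deduces the first identity from the independence of $\mathbf{W}([s,t)\times A)$ from $\xi$ together with $\hat{\mathbb{E}}[\pm\mathbf{W}([s,t)\times A)]=0$, and the second from the symmetric vanishing of $\hat{\mathbb{E}}[\pm\mathbf{W}([s,t)\times A)\mathbf{W}([s,t)\times\bar{A})]$ for disjoint $A,\bar{A}$, which is inherited from (\ref{Gwn1}). Your explicit unwinding of the definition of independence, the treatment of constants via positive homogeneity, and the density extension to $\mathbf{L}_{G}^{p}(\mathbf{W}_{[0,s]})$ simply fill in steps the paper treats as obvious.
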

\begin{proof}
The first relation is obvious, since $\mathbf{W}([s,t)\times A)$
is independent of $\xi$ and $\hat{\mathbb{E}}[\pm\mathbf{W}([s,t)\times A)]=0$.
The second relation is derived from 
\[
\hat{\mathbb{E}}[\pm\mathbf{W}([s,t)\times A)(\mathbf{W}([s,t)\times\bar{A})]=\hat{\mathbb{E}}[\pm\mathbf{W}([s,t)\times(A\cap\bar{A}))]=0.
\]
\end{proof}
Using this lemma, we now provide the following lemma which allows
us to extend the the domain of stochastic integral to $\mathbf{M}_{G}^{2}([0,T]\times\mathbb{R}^{d})$:
\begin{lem}
\label{Wcontrol} For each $f\in\mathbf{M}^{0}([0,T]\times\mathbb{R}^{d})$,
\begin{align}
 & \hat{\mathbb{E}}\left[\int_{0}^{T}\int_{\mathbb{R}^{d}}f(s,x)\mathbf{W}(ds,dx)\right]=0.\label{W integral expectation 0 }\\
 & \hat{\mathbb{E}}\left[\left|\int_{0}^{T}\int_{\mathbb{R}^{d}}f(s,x)\mathbf{W}(ds,dx)\right|^{2}\right]\leq\overline{\sigma}^{2}\hat{\mathbb{E}}\left[\int_{0}^{T}\int_{\mathbb{R}^{d}}|f(s,x)|^{2}dsdx\right].\label{W continuity}
\end{align}
\end{lem}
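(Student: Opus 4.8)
The plan is to collapse the double sum to one time block at a time by conditioning, and then telescope. Fix a simple process $f=\sum_{i=0}^{n-1}\sum_{j=1}^{m}X_{ij}\mathbf{1}_{A_{j}}\mathbf{1}_{[t_{i},t_{i+1})}$ and write $I=\int_{0}^{T}\int_{\mathbb{R}^{d}}f\,\mathbf{W}(ds,dx)=\sum_{i=0}^{n-1}\zeta_{i}$, where $\zeta_{i}=\sum_{j}X_{ij}\mathbf{W}([t_{i},t_{i+1})\times A_{j})$, together with the partial sums $R_{k}=\sum_{i=0}^{k}\zeta_{i}$, so that $R_{-1}=0$ and $R_{n-1}=I$. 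Note $R_{k-1}\in L_{ip}(\mathcal{F}_{t_{k}})$ while the block $(\mathbf{W}([t_{k},t_{k+1})\times A_{j}))_{j}$ is independent of $\mathcal{F}_{t_{k}}$. Two facts drive everything: (a) each block is \emph{symmetric} given the past, $\hat{\mathbb{E}}[\zeta_{k}\,|\,\mathcal{F}_{t_{k}}]=\hat{\mathbb{E}}[-\zeta_{k}\,|\,\mathcal{F}_{t_{k}}]=0$; and (b) its conditional second moment is explicit, $\hat{\mathbb{E}}[\zeta_{k}^{2}\,|\,\mathcal{F}_{t_{k}}]=\overline{\sigma}^{2}(t_{k+1}-t_{k})\sum_{j}X_{kj}^{2}\lambda_{A_{j}}=:g_{k}$.

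I would first establish (a) and (b). By the construction of $\hat{\mathbb{E}}$ and $\hat{\mathbb{E}}[\cdot\,|\,\mathcal{F}_{t_{k}}]$, with the $X_{kj}$ frozen and the block realized as $\sqrt{t_{k+1}-t_{k}}\,\xi_{k+1}$ for a $G$-normal $\xi_{k+1}$, both quantities become $\tilde{\mathbb{E}}$ applied to a linear, respectively quadratic, form in $\xi_{k+1}$. For (a) I use that any $G$-normal vector $\xi$ satisfies $\hat{\mathbb{E}}[\langle p,\xi\rangle]=0$ for every fixed $p$, a one-line consequence of $\xi+\bar{\xi}\overset{d}{=}\sqrt{2}\xi$ together with independence of the copy. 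For (b) the form is $\langle Q\xi_{k+1},\xi_{k+1}\rangle$ with $Q=(X_{kj}X_{kj'})$, so $\tilde{\mathbb{E}}[\langle Q\xi_{k+1},\xi_{k+1}\rangle]=2G(\sum_{j,j'}X_{kj}X_{kj'}\lambda_{A_{j}\cap A_{j'}})$; since the $A_{j}$ are mutually disjoint this collapses to $2G(\sum_{j}X_{kj}^{2}\lambda_{A_{j}})$, and as the argument of $G$ is nonnegative only the $\overline{\sigma}^{2}$-branch survives, giving $g_{k}$. Summing, $\sum_{k}g_{k}=\overline{\sigma}^{2}\sum_{k,j}(t_{k+1}-t_{k})X_{kj}^{2}\lambda_{A_{j}}$, which is exactly $\overline{\sigma}^{2}\int_{0}^{T}\int_{\mathbb{R}^{d}}|f|^{2}\,ds\,dx$.

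The zero-mean identity is then immediate: using the Proposition on linearity of $\hat{\mathbb{E}}[\cdot\,|\,\mathcal{F}_{t}]$ over two-sidedly-vanishing variables, fact (a) gives $\hat{\mathbb{E}}[R_{k}\,|\,\mathcal{F}_{t_{k}}]=R_{k-1}$, and iterating the tower property down to $R_{-1}=0$ yields $\hat{\mathbb{E}}[I]=0$. For the $L^{2}$ bound I expand $R_{k}^{2}=R_{k-1}^{2}+2R_{k-1}\zeta_{k}+\zeta_{k}^{2}$. Conditioning on $\mathcal{F}_{t_{k}}$, the cross term drops out \emph{exactly}: property (iv) with $\eta=2R_{k-1}$ and fact (a) give $\hat{\mathbb{E}}[2R_{k-1}\zeta_{k}\,|\,\mathcal{F}_{t_{k}}]=0$, whence subadditivity plus (b) yield $\hat{\mathbb{E}}[R_{k}^{2}\,|\,\mathcal{F}_{t_{k}}]\le R_{k-1}^{2}+g_{k}$. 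Setting $Y_{k}:=R_{k}^{2}-\sum_{l=0}^{k}g_{l}$ and using that $\sum_{l\le k}g_{l}\in L_{ip}(\mathcal{F}_{t_{k}})$ to pull it through the conditioning, this rearranges to the supermartingale inequality $\hat{\mathbb{E}}[Y_{k}\,|\,\mathcal{F}_{t_{k}}]\le Y_{k-1}$. Telescoping via the tower property and monotonicity down to $Y_{-1}=0$ gives $\hat{\mathbb{E}}[I^{2}-\sum_{l}g_{l}]=\hat{\mathbb{E}}[Y_{n-1}]\le0$, and one final subadditivity step $\hat{\mathbb{E}}[I^{2}]\le\hat{\mathbb{E}}[I^{2}-\sum_{l}g_{l}]+\hat{\mathbb{E}}[\sum_{l}g_{l}]$ delivers the claim.

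I expect the main obstacle to be organizing the conditional expectations rather than any deep difficulty. The naive move of distributing $\hat{\mathbb{E}}$ over the double sum by subadditivity fails, producing the wrong inequality direction for both the spatial and the temporal cross terms. The key is that these cross terms must be \emph{eliminated exactly}, which is possible only because $\hat{\mathbb{E}}[\pm\zeta_{k}\,|\,\mathcal{F}_{t_{k}}]=0$ upgrades subadditivity to genuine linearity over the symmetric increments; and the telescoping must be routed through the single auxiliary process $Y_{k}$, so that the future terms $g_{l}$ with $l>k$, which are not $\mathcal{F}_{t_{k}}$-measurable, are never pulled out of a conditional expectation prematurely.
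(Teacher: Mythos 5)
Your proof is correct, and it reaches the two estimates by the same underlying mechanism as the paper — exact cancellation of cross terms via the symmetry $\hat{\mathbb{E}}[\pm\,\cdot\,]=0$, plus a compensated quadratic sum telescoped backward through the time blocks — but the organization is genuinely different. The paper expands $|I|^{2}$ into the full double sum over $(i,j)$ and $(l,k)$, removes every cross term one at a time using Lemma \ref{pmW} together with Lemma \ref{pmLemma} (temporal cross terms by the first relation, spatial ones by the second), and then treats the surviving diagonal by introducing the compensator $\eta_{n}$ and running a backward recursion that invokes only the definition of independence; it never touches conditional expectations. You instead work block by block: the conditional second moment of an entire block $\zeta_{k}$ is computed in one stroke from the generating function $G_{A_{1},\cdots,A_{m}}(Q)$ with $Q=(X_{kj}X_{kj'})$, so the spatial cross terms never need separate elimination (disjointness kills $\lambda_{A_{j}\cap A_{j'}}$ inside $G$), and the temporal bookkeeping becomes the supermartingale property of $Y_{k}=R_{k}^{2}-\sum_{l\le k}g_{l}$, which is the standard It\^o-isometry argument; as a by-product your identity $\hat{\mathbb{E}}[R_{k}\,|\,\mathcal{F}_{t_{k}}]=R_{k-1}$ is exactly the discrete form of the martingale property in Proposition \ref{stochastic integral properties}. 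The trade-off is that your route leans on the conditional-expectation calculus — properties (ii)--(v) and the proposition on linearity over symmetric variables — which the paper states but explicitly leaves unproved, whereas the paper's own proof is self-contained modulo Lemmas \ref{pmLemma} and \ref{pmW}. Two small points to make explicit if you write this up: pulling the $\mathcal{F}_{t_{k}}$-measurable compensator $\sum_{l\le k}g_{l}$ through the conditioning uses $\hat{\mathbb{E}}[X+\eta\,|\,\mathcal{F}_{t}]=\hat{\mathbb{E}}[X\,|\,\mathcal{F}_{t}]+\eta$ for $\mathcal{F}_{t}$-measurable $\eta$, which you should derive from (ii) and (iii) by applying subadditivity in both directions; and the unconditional tower step $\hat{\mathbb{E}}[\hat{\mathbb{E}}[\,\cdot\,|\,\mathcal{F}_{t}]]=\hat{\mathbb{E}}[\,\cdot\,]$ should be cited from the construction of $\hat{\mathbb{E}}$ rather than from (v) alone.
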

\begin{proof}
Suppose $f(s,x;\omega)=\sum\limits _{i=0}^{n-1}\sum\limits _{j=1}^{m}X_{ij}(\omega)\textbf{1}_{A_{j}}(x)\textbf{1}_{[t_{i},t_{i+1})}(s),$
where $0\leq t_{0}<t_{1}<\cdots<t_{n}\leq T$, $\{A_{j}\}_{j=1}^{m}\subset\mathcal{B}_{0}(\mathbb{R}^{d})$
is a partition of $\mathbb{R}^{d}$, $X_{ij}\in L_{ip}(\mathcal{F}_{t_{i}})$,
$i=1,\cdots,n$, $j=1,\cdots,m$. It follows from the first relation
of Lemma \ref{pmW}, combined with Lemma \ref{pmLemma} that 
\[
\hat{\mathbb{E}}\left[\int_{0}^{T}\int_{\mathbb{R}^{d}}f(s,x)\mathbf{W}(ds,dx)\right]=\hat{\mathbb{E}}\left[\sum_{i=0}^{n-1}\sum_{j=1}^{m}X_{ij}(\mathbf{W}([t_{i},t_{i+1})\times A_{j})\right]=0.
\]
Hence we derive the equation (\ref{W integral expectation 0 }). For
the second one, 
\begin{align*}
\hat{\mathbb{E}}&\left[\left|\int_{0}^{T}\int_{\mathbb{R}^{d}}f(s,x)\mathbf{W}(ds,dx)\right|^{2}\right]  =\hat{\mathbb{E}}\left[\left|\sum_{i=0}^{n-1}\sum_{j=1}^{m}X_{ij}\mathbf{W}([t_{i},t_{i+1})\times A_{j})\right|^{2}\right]\\
 & =\hat{\mathbb{E}}\left[\sum_{i=0}^{n-1}\sum_{j=1}^{m}\sum_{l=0}^{n-1}\sum_{k=1}^{m}X_{ij}X_{lk}\mathbf{W}([t_{i},t_{i+1})\times A_{j})\mathbf{W}([t_{l},t_{l+1})\times A_{k})\right]\\
 & =\hat{\mathbb{E}}\left[\sum_{i=0}^{n-1}\sum_{j=1}^{m}\sum_{k=1}^{m}X_{ij}X_{ik}\mathbf{W}([t_{i},t_{i+1})\times A_{j})\mathbf{W}([t_{i},t_{i+1})\times A_{k})\right].
\end{align*}
where the last equality is from the first relation of Lemma \ref{pmW},
combined with Lemma \ref{pmLemma} since $A_{j}\cap A_{k}=\emptyset$
for $j\not=k$. We then apply the second relation of Lemma \ref{pmW},
once more combined with Lemma~\ref{pmLemma}, to obtain 
\begin{align*}
\hat{\mathbb{E}}\left[\left|\int_{0}^{T}\int_{\mathbb{R}^{d}}f(s,x)\mathbf{W}(ds,dx)\right|^{2}\right] & =\hat{\mathbb{E}}\left[\sum_{i=0}^{n-1}\sum_{j=1}^{m}X_{ij}^{2}\left(\mathbf{W}([t_{i},t_{i+1})\times A_{j})\right)^{2}\right]\\
 & \leq\hat{\mathbb{E}}\left[\eta_{n}+{\overline{\sigma}}^{2}\hat{\mathbb{E}}\left[\sum_{i=0}^{n-1}\sum_{j=1}^{m}X_{ij}^{2}(t_{i+1}-t_{i})\lambda_{A_{j}}\right]\right]\\
 & ={\overline{\sigma}}^{2}\hat{\mathbb{E}}\left[\sum_{i=0}^{n-1}\sum_{j=1}^{m}X_{ij}^{2}(t_{i+1}-t_{i})\lambda_{A_{j}}\right]\\
 & ={\overline{\sigma}}^{2}\hat{\mathbb{E}}\left[\int_{0}^{T}\int_{\mathbb{R}^{d}}|f(t,x)|^{2}dxdt\right],
\end{align*}
where we denote 
\[
\eta_{k}:=\sum_{i=0}^{k-1}\sum_{j=1}^{m}X_{ij}^{2}\left[\left(\mathbf{W}([t_{i},t_{i+1})\times A_{j})\right)^{2}-{\overline{\sigma}}^{2}(t_{i+1}-t_{i})\lambda_{A_{j}}\right],\,\,\,k=1,\cdots,n.
\]
Here, to obtain that $\hat{\mathbb{E}}[\eta_{n}]=0$, we take the
following procedure, adopted from Peng \cite{P2010}. 
\begin{align*}
\hat{\mathbb{E}}[\eta_{n}] & =\hat{\mathbb{E}}\left[\eta_{n-1}+\sum_{j=1}^{m}X_{n-1,j}^{2}\left[\left(\mathbf{W}([t_{n-1},t_{n})\times A_{j})\right)^{2}-{\overline{\sigma}}^{2}(t_{n}-t_{n-1})\lambda_{A_{j}}\right]\right]\\
 & =\hat{\mathbb{E}}\left[a+\hat{\mathbb{E}}\left[\sum_{j=1}^{m}b_{j}^{2}\Big[\left(\mathbf{W}([t_{n-1},t_{n})\times A_{j})\right)^{2}-{\overline{\sigma}}^{2}(t_{n}-t_{n-1})\lambda_{A_{j}}\Big]\right]_{a=\eta_{n-2},b_{j}=X_{n-1,j}}\right]\\
 & =\hat{\mathbb{E}}[\eta_{n-2}]=\cdots=\hat{\mathbb{E}}[\eta_{1}]=0.
\end{align*}
The proof is complete. 
\end{proof}
Therefore, we can continuously extend the domain $\mathbf{M}^{0}([0,T]\times\mathbb{R}^{d})$ of the stochastic integral  (\ref{Def-int}) to 
$\mathbf{M}^{2}_G([0,T]\times\mathbb{R}^{d})$. 
%with
%respect to the spatial-temporal G-white noise $\mathbf{W}$ to the
%mapping from $\mathbf{M}_{G}^{2}([0,T]\times\mathbb{R}^{d})$ to {\color{red}$\mathbf{L}_{G}^{2}(\mathbf{W}_{[0,T]})$}.
Indeed, for any $f\in\mathbf{M}_{G}^{2}([0,T]\times\mathbb{R}^{d})$, there
exists a sequence of simple processes $f_{n}\in\mathbf{M}^{0}([0,T]\times\mathbb{R}^{d})$
such that $\lim\limits _{n\rightarrow\infty}\|f_{n}-f\|_{\mathbf{M}^{2}}=0$.
By Lemma \ref{Wcontrol}, we have that 
\[
\left\{\int_{0}^{T}\int_{\mathbb{R}^{d}}f_{n}(s,x)\mathbf{W}(ds,dx)\right\}_{n=1}^{\infty}
\]
is a Cauchy sequence in $\mathbf{L}_{G}^{2}(\mathbf{W}_{[0,T]})$.
As $\mathbf{L}_{G}^{2}(\mathbf{W}_{[0,T]})$ is complete, we can define
\[
\int_{0}^{T}\int_{\mathbb{R}^{d}}f(s,x)\mathbf{W}(ds,dx):=\mathbb{L}^{2}-\lim\limits _{n\rightarrow\infty}\int_{0}^{T}\int_{\mathbb{R}^{d}}f_{n}(s,x)\mathbf{W}(ds,dx).
\]

It is easy to check that the stochastic integral has the following
properties. 
\begin{prop}
\label{stochastic integral properties} For each $f,g\in\mathbf{M}_{G}^{2}([0,T]\times\mathbb{R}^{d})$,
$0\leq s\leq r\leq t\leq T$, we have \end{prop}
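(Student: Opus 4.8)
The plan is to verify each asserted property first for simple integrands $f,g\in\mathbf{M}^{0}([0,T]\times\mathbb{R}^{d})$, where the integral is the explicit finite sum (\ref{Def-int}), and then to transfer the identity to general $f,g\in\mathbf{M}_{G}^{2}([0,T]\times\mathbb{R}^{d})$ by approximating with simple processes and invoking the contraction estimate (\ref{W continuity}) of Lemma \ref{Wcontrol}, which guarantees that the integral is a continuous linear map from $(\mathbf{M}_{G}^{2},\|\cdot\|_{\mathbf{M}^{2}})$ into $\mathbf{L}_{G}^{2}(\mathbf{W}_{[0,T]})$.

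For the linearity statement, given two simple processes I would first pass to a common refinement of their underlying time grids and space partitions, so that $f$ and $g$ share the form $\sum_{i,j}X_{ij}\mathbf{1}_{A_{j}}\mathbf{1}_{[t_{i},t_{i+1})}$ and $\sum_{i,j}Y_{ij}\mathbf{1}_{A_{j}}\mathbf{1}_{[t_{i},t_{i+1})}$; linearity of (\ref{Def-int}) in the coefficients is then immediate, and it survives the $\mathbf{L}^{2}$-limit. For the interval-additivity over $[s,r]$ and $[r,t]$, the mechanism is the elementary splitting $\mathbf{1}_{[s,t)}=\mathbf{1}_{[s,r)}+\mathbf{1}_{[r,t)}$ together with condition (ii) of Definition \ref{tx-Gwhitenoise}, namely $\mathbf{W}([s,r)\times A)+\mathbf{W}([r,t)\times A)=\mathbf{W}([s,t)\times A)$; refining the time grid to include $r$ as a knot lets me split each summand of (\ref{Def-int}) and regroup, after which continuity closes the general case.

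The step I expect to be the genuine obstacle is any identity involving the conditional expectation $\hat{\mathbb{E}}[\cdot\,|\mathcal{F}_{s}]$ --- most likely the martingale-type vanishing of integrals of increments over $[s,t)$, or a tower identity. Here linearity is unavailable because $\hat{\mathbb{E}}[\cdot\,|\mathcal{F}_{s}]$ is merely sublinear, so I would argue through independence instead. For a simple integrand the coefficients $X_{ij}$ attached to a time block $[t_{i},t_{i+1})$ with $t_{i}\geq s$ are $\mathcal{F}_{t_{i}}$-measurable, and condition (iii) of Definition \ref{tx-Gwhitenoise} makes $\mathbf{W}([t_{i},t_{i+1})\times A_{j})$ independent of the past; combined with $\hat{\mathbb{E}}[\pm\mathbf{W}([t_{i},t_{i+1})\times A_{j})]=0$ and the cancellation Lemma \ref{pmLemma}, this forces the conditional expectation of each increment term to vanish and lets the $\mathcal{F}_{s}$-measurable part pass through unchanged. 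The delicate point is to choose the approximating simple processes so that $s$ (and $r$) are grid knots, ensuring the $\mathcal{F}_{s}$-measurability structure is preserved under the limit; the continuity of $\hat{\mathbb{E}}[\cdot\,|\mathcal{F}_{s}]$ in $\|\cdot\|_{2}$ noted just above the proposition then yields the statement for all $f\in\mathbf{M}_{G}^{2}$.
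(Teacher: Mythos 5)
The paper itself offers no proof of this proposition (it is prefaced only by ``It is easy to check that the stochastic integral has the following properties''), so your proposal must be judged against the argument the authors evidently intend, and your strategy is exactly that argument: verify each identity for simple integrands, where the integral is the explicit finite sum (\ref{Def-int}), then extend by density using the continuity estimate (\ref{W continuity}) of Lemma \ref{Wcontrol}. Your handling of interval-additivity (refine the grid so that $r$ is a knot and split each block via condition (ii) of Definition \ref{tx-Gwhitenoise}) and of linearity at the level of simple data (common refinement, pathwise algebra) is correct. Two points, however, need tightening, although neither breaks the proof. First, for the conditional-expectation property you cite Lemma \ref{pmLemma}, which is a statement about the \emph{unconditional} expectation; since $\hat{\mathbb{E}}[\,\cdot\,|\mathcal{F}_r]$ is only sublinear, what you actually need is its conditional counterpart, and the paper supplies the ingredients: by property (iv) of the conditional expectation, $\hat{\mathbb{E}}[\pm X_{ij}\mathbf{W}([t_i,t_{i+1})\times A_j)\,|\,\mathcal{F}_{t_i}]=X_{ij}^{+}\hat{\mathbb{E}}[\pm\mathbf{W}([t_i,t_{i+1})\times A_j)\,|\,\mathcal{F}_{t_i}]+X_{ij}^{-}\hat{\mathbb{E}}[\mp\mathbf{W}([t_i,t_{i+1})\times A_j)\,|\,\mathcal{F}_{t_i}]=0$ by condition (iii) of Definition \ref{tx-Gwhitenoise}, and then the proposition on conditional linearity for symmetric random variables (the one stating that $\hat{\mathbb{E}}[Y|\mathcal{F}_t]=-\hat{\mathbb{E}}[-Y|\mathcal{F}_t]$ implies $\hat{\mathbb{E}}[X+\alpha Y|\mathcal{F}_t]=\hat{\mathbb{E}}[X|\mathcal{F}_t]+\alpha\hat{\mathbb{E}}[Y|\mathcal{F}_t]$), combined with the tower property (v), lets you peel off the time blocks by backward induction starting from $\mathcal{F}_{t_{n-1}}$; citing the unconditional Lemma \ref{pmLemma} alone does not literally apply. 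Second, in the linearity statement the multiplier $\alpha$ is a \emph{bounded random variable} in $\mathbf{L}_{G}^{1}(\mathbf{W}_{[0,s]})$, not a scalar: beyond the adaptedness issue your grid-knot remark addresses, you need a second approximation $\alpha_k\to\alpha$ with $\alpha_k\in L_{ip}(\mathcal{F}_s)$ truncated to a uniform bound, since the uniform bound is what makes both $\|(\alpha_k-\alpha)f\|_{\mathbf{M}^{2}}\to0$ and $\alpha_k\int_{s}^{T}\int_{\mathbb{R}^{d}}f\,\mathbf{W}(ds,dx)\to\alpha\int_{s}^{T}\int_{\mathbb{R}^{d}}f\,\mathbf{W}(ds,dx)$ go through --- this is precisely why the proposition assumes $\alpha$ bounded. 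With these two routine repairs your proof is complete and is, in substance, the proof the paper omits.
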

\begin{itemize}
\item [{(1)}] $\int_{s}^{T}\int_{\mathbb{R}^{d}}f(s,x)\mathbf{W}(ds,dx)=\int_{s}^{r}\int_{\mathbb{R}^{d}}f(s,x)\mathbf{W}(ds,dx)+\int_{r}^{T}\int_{\mathbb{R}^{d}}f(s,x)\mathbf{W}(ds,dx).$ 
\item [{(2)}] $\int_{s}^{T}\int_{\mathbb{R}^{d}}(\alpha f(s,x)+g(s,x))\mathbf{W}(ds,dx)$\\ $ =\alpha\int_{s}^{T}\int_{\mathbb{R}^{d}}f(s,x)\mathbf{W}(ds,dx)+\int_{s}^{T}\int_{\mathbb{R}^{d}}g(s,x)\mathbf{W}(ds,dx)$,
if  $\alpha\in\mathbf{L}_{G}^{1}(\mathbf{W}_{[0,s]})$ is bounded.
\item [{(3)}] $\hat{\mathbb{E}}[\int_{r}^{T}\int_{\mathbb{R}^{d}}f(s,x)\mathbf{W}(ds,dx)|\mathcal{F}_{r}]=0.$ 
\end{itemize}
For each $f\in\mathbf{M}_{G}^{2}([0,T]\times\mathbb{R}^{d})$, by
(3) in Proposition \ref{stochastic integral properties}, we can obtain
that 
\[\{\int_{0}^{t}\int_{\mathbb{R}^{d}}f(r,x)\mathbf{W}(dr,dx),\,\,\,\, t\in[0,T]\}
\]
is a martingale in the sense that 
\[
\hat{\mathbb{E}}[\int_{0}^{t}\int_{\mathbb{R}^{d}}f(r,x)\mathbf{W}(dr,dx)|\mathcal{F}_{s}]=\int_{0}^{s}\int_{\mathbb{R}^{d}}f(r,x)\mathbf{W}(dr,dx)\ \ if\ 0\leq t\leq T.
\]
Furthermore, $\{\int_{0}^{T}\int_{\mathbb{R}^{d}}f(r,x)\mathbf{W}(dr,dx),\mathcal{F}_{t},t\in[0,T]\}$
is a martingale measure.

\section{Conclusion remarks}
Based on a new type of $G$-Gaussian process introduced in Peng \cite{Peng2011},
we develop a general Gaussian random field under a sublinear expectation
framework. We first construct a $G$-Gaussian random field whose finite
dimensional distributions are $G$-normally distributed. A remarkable
point is that, different from the classical probability theory, a
$G$-Brownian motion is not a $G$-Gaussian random field. A $G$-Brownian motion is suitable 
to be as a temporal random field, whereas a random
Gaussian field is rather suitable for describe spatial uncertainty.
This new framework of $G$-Gaussian random field can be used to quantitatively
and efficiently study such type of spatial uncertainties and risks.

We then define a special family of
generating functions $\{G_{\underline{\gamma}}\}_{\underline{\gamma}\in\mathcal{J}_{\Gamma}}$
as (\ref{g-q}) and  establish the corresponding white noise of
space type. Stochastic integral of functions in $L^{2}(\mathbb{R}^{d})$
with respect to the spatial white noise has been established and the
family of stochastic integrals is still a $G$-Gaussian random field.

Furthermore, we focus on the spatial-temporal $G$-white noise on the
sublinear expectation space. Applying the similar method of $G$-It\^o's
integral, we develop the stochastic calculus with respect to the spatial-temporal
white noise on $\mathbf{M}_{G}^{2}([0,T]\times\mathbb{R}^{d})$.

%{\color{red} 
%It is important to mention that a spatial-temporal $G$-white noise is no more a $G$-Gaussian random field. 
%But for a fixed $A\in \mathcal{B}_0(\mathbb{B})$, $W([0,t)\times A$ is a $G$-Brownian motion, and  for a fixed $0\leq s<t<\infty$, $\{W([s,t]\times A\}_{A\in \mathcal{B}_0(\mathbb{B})}$ is a spatial  Gaussian field. 
%
%On the other hand, for fixed time index, the spatial-temporal $G$-white noise is still a $G$-Gaussian random field and for fixed space index, the corresponding white noise becomes a $G$-Brownian motion.} 

$G$-Gaussian random field and $G$-white noise theory can be widely
used to study the uncertain models in many physical fields, such
as statistical physics and quantum theory.

\section{Appendix}

\subsection{Upper expectation and the corresponding capacity}

Let $\Gamma$ be defined as (\ref{Gamma}). In this section, unless
otherwise mentioned, we always denote by $\Omega=C_{0}(\Gamma,\mathbb{R})$,
the space of all $\mathbb{R}-$valued continuous functions $(\omega_{\gamma})_{\gamma\in\Gamma}$,
with $\omega_{0}=0$, equipped with the distance 
\[
\rho(\omega^{(1)},\omega^{(2)}):=\sum_{i=1}^{\infty}2^{-i}[(\max_{\gamma\subset\mathfrak{B}^{d}(0;i)}|\omega_{\gamma}^{(1)}-\omega_{\gamma}^{(2)}|)\wedge1],\ \forall\omega^{(1)},\omega^{(2)}\in\Omega,
\]
where $\mathfrak{B}^{d}(0;i)=\{x\in\mathbb{R}^{d}:|x|\leq i\}$. Let$\ \bar{\Omega}=\mathbb{R}^{\Gamma}$
denote the space of all $\mathbb{R}-$valued functions $(\bar{\omega}_{\gamma})_{\gamma\in\Gamma}$.

We also denote by $\mathcal{B}(\Omega)$, the $\sigma$-algebra generated
by all open sets and let $\mathcal{B}(\bar{\Omega})$ be the $\sigma$-algebra
generated by all finite dimensional cylinder sets. The corresponding
canonical process is $\mathbb{W}_{\gamma}(\omega)=\omega_{\gamma}$
(respectively, $\bar{\mathbb{W}}_{\gamma}(\bar{\omega})=\bar{\omega}_{\gamma}$),
$\gamma\in\Gamma$ for $\omega\in\Omega$\ (respectively,\ $\bar{\omega}\in\bar{\Omega}$).

The spaces of Lipschitz cylinder functions on $\Omega$ and $\bar{\Omega}$
are denoted respectively by 
\[
L_{ip}(\Omega):=\{\varphi({\mathbb{W}}_{{\gamma}_{1}},{\mathbb{W}}_{{\gamma}_{2}},\cdots,{\mathbb{W}}_{{\gamma}_{n}}):\forall n\geq1,\gamma_{1},\cdots,\gamma_{n}\in\Gamma,\forall\varphi\in C_{l.Lip}(\mathbb{R}^{n})\},
\]
\[
L_{ip}(\bar{\Omega}):=\{\varphi(\bar{{\mathbb{W}}}_{{\gamma}_{1}},\bar{{\mathbb{W}}}_{{\gamma}_{2}},\cdots,\bar{{\mathbb{W}}}_{{\gamma}_{n}}):\forall n\geq1,\gamma_{1},\cdots,\gamma_{n}\in\Gamma,\forall\varphi\in C_{l.Lip}(\mathbb{R}^{n})\}.
\]

Let $\{G_{\underline{\gamma}}(\cdot),\underline{\gamma}\in\mathcal{J}_{\Gamma}\}$
be a family of continuous monotone and sublinear functions given by
(\ref{g-q}). Following Section 4, we can construct the corresponding
sublinear expectation $\mathbb{\bar{E}}$ on $(\bar{\Omega},L_{ip}(\bar{\Omega}))$.
Due to the natural correspondence of $L_{ip}(\bar{\Omega})$ and $L_{ip}(\Omega)$,
we also construct a sublinear expectation $\hat{\mathbb{E}}$ on $(\Omega,L_{ip}(\Omega))$
such that $({\mathbb{W}}_{{\gamma}}({\omega}))_{\gamma\in\Gamma}$
is a $G$-white noise.

We want to construct a weakly compact family of ($\sigma$-additive)
probability measures $\mathcal{P}$ on $(\Omega,{\mathcal{B}}(\Omega))$
such that the sublinear expectation can be represented as an upper
expectation, namely, 
\[
\hat{\mathbb{E}}[\cdot]=\max_{P\in\mathcal{P}}E_{P}[\cdot].
\]
The following lemmas are variations of Lemma 3.3 and 3.4 in Chapter
I of Peng \cite{P2010}.

We denote $\mathcal{J}_{\Gamma}:=\{\underline{{\gamma}}=(\gamma_{1},\ldots,\gamma_{m}):\forall m\in\mathbb{N},\,\,\gamma_{i}\in\Gamma\}.$ 
\begin{lem}
\label{le3} Let $\underline{\gamma}=(\gamma_{1},\cdots,\gamma_{m})\in\mathcal{J}_{\Gamma}$
and $\{\varphi_{n}\}_{n=1}^{\infty}\subset C_{l.Lip}(\mathbb{R}^{m})$
satisfy $\varphi_{n}\downarrow0$ as $n\to\infty$. Then $\mathbb{\bar{E}}[\varphi_{n}(\bar{{\mathbb{W}}}_{{\gamma}_{1}},\bar{{\mathbb{W}}}_{{\gamma}_{2}},\cdots,\bar{{\mathbb{W}}}_{{\gamma}_{m}})]\downarrow0$.
\end{lem}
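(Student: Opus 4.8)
The plan is to reduce the statement to a continuity-from-above property of the finite-dimensional $G$-normal distribution and to establish that property by a cut-off argument combining Dini's theorem on compact sets with a uniform tail estimate. First I would set $\xi=(\bar{\mathbb{W}}_{\gamma_1},\cdots,\bar{\mathbb{W}}_{\gamma_m})$, which by the construction preceding the lemma is $G$-normally distributed, so that $\bar{\mathbb{E}}[\varphi_n(\xi)]=\mathbb{F}_{\underline{\gamma}}[\varphi_n]$, where $\mathbb{F}_{\underline{\gamma}}$ is the sublinear distribution on $C_{l.Lip}(\mathbb{R}^m)$ determined by $G_{\underline{\gamma}}$ (Proposition~\ref{G heat equation}). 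Since $\varphi_n\downarrow 0$ forces $\varphi_n\ge 0$ and $\varphi_n\ge\varphi_{n+1}$, monotonicity and constant preservation give $\bar{\mathbb{E}}[\varphi_n(\xi)]\ge\bar{\mathbb{E}}[\varphi_{n+1}(\xi)]\ge 0$, so the limit $\ell:=\lim_n\bar{\mathbb{E}}[\varphi_n(\xi)]\ge 0$ exists; it remains only to show $\ell=0$.

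Next I would fix $\varepsilon>0$ and introduce a cut-off $h_R\in C_{l.Lip}(\mathbb{R}^m)$ with $0\le h_R\le 1$, $h_R\equiv 1$ on $\{|x|\le R\}$ and $h_R\equiv 0$ on $\{|x|\ge R+1\}$, and split $\varphi_n=\varphi_n h_R+\varphi_n(1-h_R)$, so that by sub-additivity $\bar{\mathbb{E}}[\varphi_n(\xi)]\le\bar{\mathbb{E}}[\varphi_n h_R(\xi)]+\bar{\mathbb{E}}[\varphi_n(1-h_R)(\xi)]$. For the tail term I would use $\varphi_n\le\varphi_1$ together with the polynomial growth bound $0\le\varphi_1(x)\le C(1+|x|^p)$, which follows from the defining inequality of $C_{l.Lip}$ by taking $y=0$. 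Since $1-h_R$ is supported in $\{|x|>R\}$, on that support $1\le |x|/R$, whence
\[
\varphi_1(x)(1-h_R(x))\le \frac{|x|}{R}\,\varphi_1(x)\le \frac{1}{R}\,\Psi(x),\qquad \Psi(x):=C(1+|x|^p)|x|\in C_{l.Lip}(\mathbb{R}^m).
\]
Monotonicity and positive homogeneity then give $\bar{\mathbb{E}}[\varphi_n(1-h_R)(\xi)]\le R^{-1}\bar{\mathbb{E}}[\Psi(\xi)]$, uniformly in $n$. The crucial quantitative input is that every polynomial moment of a $G$-normal vector is finite, so $\bar{\mathbb{E}}[\Psi(\xi)]<\infty$ and this term is $\le\varepsilon/2$ once $R$ is large enough.

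For the compact term, $\varphi_n h_R$ is supported in the closed ball $\bar{B}(0,R+1)$; on this compact set $\varphi_n\downarrow 0$ pointwise with each $\varphi_n$ continuous, so Dini's theorem yields $\delta_n:=\sup_{|x|\le R+1}\varphi_n(x)\to 0$. As $0\le\varphi_n h_R\le\delta_n$ everywhere, monotonicity and constant preservation give $\bar{\mathbb{E}}[\varphi_n h_R(\xi)]\le\delta_n\le\varepsilon/2$ for $n$ large. Combining the two bounds shows $\bar{\mathbb{E}}[\varphi_n(\xi)]\le\varepsilon$ for all large $n$, and since $\varepsilon>0$ is arbitrary we conclude $\ell=0$. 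I expect the main obstacle to be the uniform tail control: one must justify $\bar{\mathbb{E}}[\Psi(\xi)]<\infty$ (finiteness of the moments of the $G$-normal marginal, available from the theory of the $G$-heat equation) and verify that each cut-off piece remains in $C_{l.Lip}(\mathbb{R}^m)$ so that $\bar{\mathbb{E}}$ applies; the compact part is then a routine Dini argument.
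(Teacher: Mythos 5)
Your proof is correct. The paper itself gives no argument for this lemma (it defers to Lemma 3.3 of Chapter I in Peng's book \cite{P2010}), and your argument --- splitting $\varphi_n$ with a Lipschitz cut-off $h_R$, applying Dini's theorem on the compact part, and controlling the tail by $R^{-1}\bar{\mathbb{E}}[\Psi(\xi)]$ with $\Psi\in C_{l.Lip}(\mathbb{R}^m)$ (finite automatically, since $\Psi(\xi)$ lies in the domain of the real-valued sublinear expectation) --- is essentially the same compactness-plus-moment-bound proof as in that cited source.
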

\begin{lem}
\label{le4} Let $E$ be a finitely additive linear expectation dominated by $\mathbb{\bar{E}}$ on $L_{ip}(\bar{\Omega})$.
Then there exists a unique probability measure $P$ on $(\bar{\Omega},\mathcal{B}(\bar{\Omega}))$
such that $E[X]=E_{P}[X]$ for each $X\in L_{ip}(\bar{\Omega})$. \end{lem}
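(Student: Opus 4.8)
The plan is to invoke the Daniell--Stone representation theorem, whose only substantive hypothesis — downward $\sigma$-continuity of the functional along decreasing sequences — is precisely what Lemma~\ref{le3} delivers for $\bar{\mathbb{E}}$ and what the domination assumption then transfers to $E$. So the proof will consist of checking that $E$ is a positive linear functional on the Stone vector lattice $L_{ip}(\bar{\Omega})$ enjoying this continuity, and then quoting Daniell--Stone.

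First I would record the elementary consequences of the phrase ``$E$ is linear and dominated by $\bar{\mathbb{E}}$''. Domination means $E[X]\le\bar{\mathbb{E}}[X]$ for all $X\in L_{ip}(\bar{\Omega})$, and combined with the linearity of $E$ this yields the two-sided bound $-\bar{\mathbb{E}}[-X]\le E[X]\le\bar{\mathbb{E}}[X]$. Applying this to $X=\pm1$ and using the constant-preserving property of $\bar{\mathbb{E}}$ gives $E[1]=1$, so the representing measure will automatically be a probability measure. Monotonicity of $E$ follows as well: if $X\ge Y$ then $-(X-Y)\le0$, hence $\bar{\mathbb{E}}[-(X-Y)]\le0$, and therefore $E[X]-E[Y]=E[X-Y]\ge-\bar{\mathbb{E}}[-(X-Y)]\ge0$.

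Next I would verify the Daniell--Stone hypotheses on $L_{ip}(\bar{\Omega})$. This space contains the constants and is stable under the lattice operations $\vee,\wedge$: after merging index sets, the pointwise maximum and minimum of two $C_{l.Lip}$ functions are again in $C_{l.Lip}$, so $L_{ip}(\bar{\Omega})$ is a Stone vector lattice, and by the monotonicity just established $E$ is a positive linear functional on it. The crucial continuity condition is that for every $\underline{\gamma}=(\gamma_{1},\cdots,\gamma_{m})\in\mathcal{J}_{\Gamma}$ and every sequence $\varphi_{n}\downarrow0$ in $C_{l.Lip}(\mathbb{R}^{m})$ one has $E[\varphi_{n}(\bar{\mathbb{W}}_{\gamma_{1}},\cdots,\bar{\mathbb{W}}_{\gamma_{m}})]\downarrow0$. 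This is immediate by sandwiching: since $\varphi_{n}\ge0$, monotonicity gives $E[\varphi_{n}(\cdots)]\ge0$, while domination together with Lemma~\ref{le3} gives $E[\varphi_{n}(\cdots)]\le\bar{\mathbb{E}}[\varphi_{n}(\cdots)]\downarrow0$.

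With these hypotheses in hand, the Daniell--Stone theorem produces a unique $\sigma$-additive measure $P$ on the $\sigma$-algebra generated by $L_{ip}(\bar{\Omega})$ such that $E[X]=E_{P}[X]=\int_{\bar{\Omega}}X\,dP$ for every $X\in L_{ip}(\bar{\Omega})$, and $E[1]=1$ makes $P$ a probability measure. For uniqueness, any two representing measures agree on the integrals of all functions in $L_{ip}(\bar{\Omega})$, hence on every finite-dimensional distribution of $(\bar{\mathbb{W}}_{\gamma_{1}},\cdots,\bar{\mathbb{W}}_{\gamma_{m}})$, hence on the generating $\pi$-system of finite-dimensional cylinder sets, and a monotone-class argument forces them to coincide on all of $\mathcal{B}(\bar{\Omega})$. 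I do not anticipate a serious obstacle here, since the analytic content is entirely carried by Lemma~\ref{le3}; the one point needing care is to confirm that the $\sigma$-algebra generated by $L_{ip}(\bar{\Omega})$ is exactly $\mathcal{B}(\bar{\Omega})$, the $\sigma$-algebra of finite-dimensional cylinder sets, so that $P$ is defined on the correct domain.
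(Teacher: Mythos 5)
Your reduction of the problem to Daniell--Stone contains a genuine gap at its central step. The Daniell--Stone theorem on the lattice $L_{ip}(\bar{\Omega})$ requires downward $\sigma$-continuity along \emph{every} sequence $X_{n}\in L_{ip}(\bar{\Omega})$ with $X_{n}\downarrow0$ pointwise on $\bar{\Omega}$. Such a sequence has the form $X_{n}=\varphi_{n}(\bar{\mathbb{W}}_{\gamma_{1}},\cdots,\bar{\mathbb{W}}_{\gamma_{m_{n}}})$ where the index set grows with $n$ (only each individual $X_n$ is a cylinder function; the sequence as a whole need not live over any fixed finite set of coordinates). What you verify by sandwiching --- correctly, as far as it goes --- is the continuity only for sequences $\varphi_{n}(\bar{\mathbb{W}}_{\gamma_{1}},\cdots,\bar{\mathbb{W}}_{\gamma_{m}})$ over a \emph{fixed} $\underline{\gamma}\in\mathcal{J}_{\Gamma}$, because that is all Lemma~\ref{le3} gives. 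The full condition is not a routine consequence of the fixed-index one: proving it a priori on the cylinder lattice of the infinite product $\bar{\Omega}=\mathbb{R}^{\Gamma}$ is essentially equivalent to proving countable additivity on the cylinder algebra, which needs the compact-class (inner regularity) argument that constitutes the proof of Kolmogorov's extension theorem. So "quoting Daniell--Stone" on the infinite-dimensional space is not available with the hypotheses you have checked.

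This is exactly why the paper routes differently: it fixes $\underline{\gamma}=(\gamma_{1},\cdots,\gamma_{m})$, applies Daniell--Stone on the \emph{finite-dimensional} space $(\mathbb{R}^{m},C_{l.Lip}(\mathbb{R}^{m}))$ --- where your sandwiching argument does suffice --- to obtain a unique probability measure $P_{\underline{\gamma}}$ representing $E$ on cylinder functions over $\underline{\gamma}$; it then checks that the family $\{P_{\underline{\gamma}}:\underline{\gamma}\in\mathcal{J}_{\Gamma}\}$ is consistent and invokes Kolmogorov's consistent extension theorem to produce $P$ on $(\bar{\Omega},\mathcal{B}(\bar{\Omega}))$, with uniqueness from the monotone class theorem (that last step coincides with yours). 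Your preliminary observations (monotonicity of $E$, $E[1]=1$, the two-sided bound $-\bar{\mathbb{E}}[-X]\le E[X]\le\bar{\mathbb{E}}[X]$) are correct and are implicitly needed in the paper's argument as well, but your proof becomes complete only after inserting the Kolmogorov extension step --- or, equivalently, after supplying the compact-class argument needed to upgrade the fixed-index continuity to the full Daniell condition on $L_{ip}(\bar{\Omega})$.
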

\begin{proof}
For any fixed $\underline{\gamma}=(\gamma_{1},\ldots,\gamma_{m})\in\mathcal{J}_{\Gamma}$,
by Lemma \ref{le3}, for each sequence $\{\varphi_{n}\}_{n=1}^{\infty}\subset C_{l.Lip}(\mathbb{R}^{m})$
satisfying $\varphi_{n}\downarrow0$, we have $\mathbb{\bar{E}}[\varphi_{n}(\bar{\mathbb{W}}_{{\gamma}_{1}},\bar{\mathbb{W}}_{{\gamma}_{2}},\cdots,\bar{\mathbb{W}}_{{\gamma}_{m}})]\downarrow0$.
By Daniell-Stone's theorem (see Appendix B in Peng \cite{P2010}),
there exists a unique probability measure $P_{\underline{{\gamma}}}$
on $(\mathbb{R}^{m},\mathcal{B}(\mathbb{R}^{m}))$ such that $E_{P_{\underline{{\gamma}}}}[\varphi]=\mathbb{\bar{E}}[\varphi(\bar{\mathbb{W}}_{{\gamma}_{1}},\bar{\mathbb{W}}_{{\gamma}_{2}},\cdots,\bar{\mathbb{W}}_{{\gamma}_{m}})]$
for each $\varphi\in C_{l.Lip}(\mathbb{R}^{m})$. Thus we get a family
of finite dimensional distributions $\{P_{\underline{{\gamma}}}:\underline{{\gamma}}\in\mathcal{J}_{\Gamma}\}$.
It is easy to check that $\{P_{\underline{\gamma}}:\underline{{\gamma}}\in\mathcal{J}_{\Gamma}\}$
is a consistent family. Then by Kolmogorov's consistent extension
theorem, there exists a probability measure $P$ on $(\bar{\Omega},\mathcal{B}(\bar{\Omega}))$
such that $\{P_{\underline{{\gamma}}}:\underline{{\gamma}}\in\mathcal{J}_{\Gamma}\}$
is the finite dimensional distributions of $P$. We now prove the
uniqueness. Assume that there exists another probability measure $\bar{P}$
satisfying the condition. By Daniell-Stone's theorem, $P$ and $\bar{P}$
have the same finite-dimensional distributions, hence by the monotone
class theorem, $P=\bar{P}$. The proof is complete. \end{proof}
\begin{lem}
\label{le5} There exists a family of probability measures $\mathcal{P}_{e}$
on $(\bar{\Omega},\mathcal{B}(\bar{\Omega}))$ such that 
\[
\mathbb{\bar{E}}[X]=\max_{P\in\mathcal{P}_{e}}E_{P}[X],\quad\text{for}\ X\in L_{ip}(\bar{\Omega}).
\]
\end{lem}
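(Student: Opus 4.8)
The plan is to represent $\mathbb{\bar{E}}$ as an upper expectation by combining a Hahn--Banach extension argument with the representation Lemma~\ref{le4}. The key observation is that $L_{ip}(\bar{\Omega})$ is a linear space and $\mathbb{\bar{E}}$ is a sublinear functional on it, so that to each fixed $X_0\in L_{ip}(\bar{\Omega})$ I can attach a dominated linear functional that is \emph{exact} at $X_0$; realizing each such functional as a countably additive probability measure via Lemma~\ref{le4} then yields a family $\mathcal{P}_e$ for which the supremum in the representation is actually attained at $X_0$, giving a maximum rather than merely a supremum.

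First I would fix $X_0$ and define a linear functional $E$ on the one-dimensional subspace $\{\lambda X_0:\lambda\in\mathbb{R}\}$ by $E[\lambda X_0]=\lambda\mathbb{\bar{E}}[X_0]$. For $\lambda\ge 0$, positive homogeneity gives $E[\lambda X_0]=\mathbb{\bar{E}}[\lambda X_0]$, while for $\lambda<0$ the subadditivity of $\mathbb{\bar{E}}$ yields $\mathbb{\bar{E}}[X_0]+\mathbb{\bar{E}}[-X_0]\ge\mathbb{\bar{E}}[0]=0$, whence $\lambda(\mathbb{\bar{E}}[X_0]+\mathbb{\bar{E}}[-X_0])\le 0$ and therefore $E[\lambda X_0]\le\mathbb{\bar{E}}[\lambda X_0]$. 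Thus $E$ is dominated by the sublinear $\mathbb{\bar{E}}$ on this subspace, and the Hahn--Banach theorem extends it to a linear functional, still denoted $E$, on all of $L_{ip}(\bar{\Omega})$ with $E[X]\le\mathbb{\bar{E}}[X]$ for every $X$. I would then check that $E$ is in fact a linear expectation: constant preservation follows from $E[c]\le\mathbb{\bar{E}}[c]=c$ together with $-E[c]=E[-c]\le\mathbb{\bar{E}}[-c]=-c$, and monotonicity follows since $X\le Y$ gives $E[X]-E[Y]=E[X-Y]\le\mathbb{\bar{E}}[X-Y]\le 0$ by the monotonicity of $\mathbb{\bar{E}}$.

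With $E$ a finitely additive linear expectation dominated by $\mathbb{\bar{E}}$, Lemma~\ref{le4} provides a unique probability measure $P_{X_0}$ on $(\bar{\Omega},\mathcal{B}(\bar{\Omega}))$ with $E[X]=E_{P_{X_0}}[X]$ for all $X\in L_{ip}(\bar{\Omega})$; in particular $E_{P_{X_0}}[X_0]=\mathbb{\bar{E}}[X_0]$ and $E_{P_{X_0}}[X]\le\mathbb{\bar{E}}[X]$ for all $X$. Setting $\mathcal{P}_e:=\{P_{X_0}:X_0\in L_{ip}(\bar{\Omega})\}$, the domination gives $\sup_{P\in\mathcal{P}_e}E_P[X]\le\mathbb{\bar{E}}[X]$, while the measure $P_X$ attains the value $\mathbb{\bar{E}}[X]$, so the supremum is a maximum equal to $\mathbb{\bar{E}}[X]$ for every $X$. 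The step I expect to require the most care is verifying that the Hahn--Banach extension genuinely satisfies the monotonicity and constant-preservation properties that make it a linear expectation, since only then does Lemma~\ref{le4} --- where the regularity established in Lemma~\ref{le3} enters to upgrade finite additivity to a countably additive measure --- become applicable; the passage from a supremum to an attained maximum relies entirely on constructing one such measure per test function.
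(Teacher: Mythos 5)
Your proof is correct and follows essentially the same route as the paper: the paper's one-line proof invokes ``the representation theorem of the sublinear expectation'' together with Lemma~\ref{le4}, and that representation theorem is precisely the Hahn--Banach argument you spell out (one dominated, exact-at-$X_0$ linear expectation per test random variable, each upgraded to a countably additive measure by Lemma~\ref{le4}). Your version simply unpacks the black box, including the correct verification that the Hahn--Banach extension is monotone and constant preserving, so nothing is missing.
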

\begin{proof}
By the representation theorem of the sublinear expectation and Lemma~\ref{le4},
it is easy to get the result. 
\end{proof}
\ \ \
 For this $\mathcal{P}_{e}$, we define the associated capacity: 
\[
\tilde{c}(A):=\sup_{P\in\mathcal{P}_{\!\!e}}P(A),\quad A\in\mathcal{B}(\bar{\Omega}),
\]
and the upper expectation for each $\mathcal{B}(\bar{\Omega})$-measurable
real function $X$ which makes the following definition meaningful:
\[
\mathbb{\tilde{E}}[X]:=\sup_{P\in\mathcal{P}_{\!\!e}}E_{P}[X].
\]

\ \ \
By Theorem \ref{modification1}, there exists a continuous modification
$\tilde{\mathbb{W}}$ of $\bar{\mathbb{W}}$ in the sense that $\tilde{c}(\{\tilde{\mathbb{W}}_{{\gamma}}\not=\bar{\mathbb{W}}_{{\gamma}}\})=0$,
and 
\[
\mathbb{\tilde{E}}[|\tilde{\mathbb{W}}_{\gamma_{1}}-\tilde{\mathbb{W}}_{\gamma_{2}}|^{2d+2}]=\mathbb{\tilde{E}}[|\bar{\mathbb{W}}_{\gamma_{1}}-\bar{\mathbb{W}}_{\gamma_{2}}|^{2d+2}]=\mathbb{\bar{E}}[|\bar{\mathbb{W}}_{{\gamma}_{1}}-\bar{\mathbb{W}}_{{\gamma}_{2}}|^{2d+2}]=L|\gamma_{1}-\gamma_{2}|^{d+1},\forall\gamma_{1},\gamma_{2}\in\Gamma,
\]
where $L$ is a constant depending only on $\bar{E}$.

For any $P\in\mathcal{P}_{\!\!e}$, let $P\circ\tilde{\mathbb{W}}^{-1}$
denote the probability measure on $(\Omega,\mathcal{B}(\Omega))$
induced by $\tilde{\mathbb{W}}$ with respect to $P$. We denote $\mathcal{P}_{1}=\{P\circ\tilde{\mathbb{W}}^{-1}:P\in\mathcal{P}_{\!\!e}\}$.
Applying the well-known criterion for tightness of Kolmogorov-Chentsov's
type expressed in terms of moments (see Appendix B in Peng \cite{P2010}),
we conclude that $\mathcal{P}_{1}$ is tight. We denote by $\mathcal{P}=\overline{\mathcal{P}}_{1}$
the closure of $\mathcal{P}_{1}$ under the topology of weak convergence,
then $\mathcal{P}$ is weakly compact.

Now, we give the representation of the sublinear expectation. 
\begin{thm}
\label{Gt34} Let $\{G_{\underline{\gamma}}(\cdot),\underline{\gamma}\in\mathcal{J}_{\Gamma}\}$
be a family of continuous monotone and sublinear functions given by
(\ref{g-q}) and $\hat{\mathbb{E}}$ be the corresponding sublinear
expectation on $(\Omega,L_{ip}(\Omega))$. Then there exists a weakly
compact family of probability measures $\mathcal{P}$ on $(\Omega,\mathcal{B}(\Omega))$
such that 
\end{thm}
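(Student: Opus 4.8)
The plan is to transport the representing family $\mathcal{P}_e$ already constructed on $\bar{\Omega}$ (Lemma~\ref{le5}) onto the continuous-path space $\Omega$ by pushing it forward along the continuous modification $\tilde{\mathbb{W}}$, and then to identify the resulting image measures $\mathcal{P}_1$ together with their weak closure $\mathcal{P}=\overline{\mathcal{P}}_1$ as a representing family for $\hat{\mathbb{E}}$ on $L_{ip}(\Omega)$. The whole argument rests on the natural correspondence between $L_{ip}(\bar{\Omega})$ and $L_{ip}(\Omega)$: for $X=\varphi(\mathbb{W}_{\gamma_1},\cdots,\mathbb{W}_{\gamma_n})\in L_{ip}(\Omega)$ the associated cylinder function $\bar{X}=\varphi(\bar{\mathbb{W}}_{\gamma_1},\cdots,\bar{\mathbb{W}}_{\gamma_n})\in L_{ip}(\bar{\Omega})$ satisfies $\hat{\mathbb{E}}[X]=\bar{\mathbb{E}}[\bar{X}]$ by the very construction of $\hat{\mathbb{E}}$.

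First I would fix such an $X$ and apply Lemma~\ref{le5} to write $\hat{\mathbb{E}}[X]=\bar{\mathbb{E}}[\bar{X}]=\max_{P\in\mathcal{P}_e}E_P[\varphi(\bar{\mathbb{W}}_{\gamma_1},\cdots,\bar{\mathbb{W}}_{\gamma_n})]$. Since Theorem~\ref{modification1} furnishes a continuous modification with $\tilde{c}(\{\tilde{\mathbb{W}}_\gamma\neq\bar{\mathbb{W}}_\gamma\})=0$ and $\tilde{c}=\sup_{P\in\mathcal{P}_e}P$, each $P\in\mathcal{P}_e$ assigns full mass to $\{\tilde{\mathbb{W}}_{\gamma_i}=\bar{\mathbb{W}}_{\gamma_i}\}$ for every $i$; a finite union bound gives $P(\tilde{\mathbb{W}}_{\gamma_i}=\bar{\mathbb{W}}_{\gamma_i},\ i=1,\cdots,n)=1$. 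Hence $\varphi(\bar{\mathbb{W}}_{\gamma_1},\cdots,\bar{\mathbb{W}}_{\gamma_n})=\varphi(\tilde{\mathbb{W}}_{\gamma_1},\cdots,\tilde{\mathbb{W}}_{\gamma_n})$ holds $P$-a.s., and the change-of-variables formula for the image measure $Q:=P\circ\tilde{\mathbb{W}}^{-1}\in\mathcal{P}_1$ yields $E_P[\varphi(\tilde{\mathbb{W}}_{\gamma_1},\cdots,\tilde{\mathbb{W}}_{\gamma_n})]=E_Q[\varphi(\mathbb{W}_{\gamma_1},\cdots,\mathbb{W}_{\gamma_n})]=E_Q[X]$, where now $\mathbb{W}_\gamma(\omega)=\omega_\gamma$ is the canonical process on $\Omega$. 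Taking the supremum over $P\in\mathcal{P}_e$ produces $\hat{\mathbb{E}}[X]=\sup_{Q\in\mathcal{P}_1}E_Q[X]$ for every $X\in L_{ip}(\Omega)$.

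It then remains to replace $\mathcal{P}_1$ by its weakly compact closure $\mathcal{P}=\overline{\mathcal{P}}_1$ and to upgrade the supremum to a maximum. For bounded $X\in C_{b.Lip}(\mathbb{R}^n)$-type cylinder functions this is immediate, since $Q\mapsto E_Q[X]$ is weakly continuous, so $\sup_{\mathcal{P}_1}E_Q[X]=\sup_{\mathcal{P}}E_Q[X]$, and weak compactness of $\mathcal{P}$ makes the supremum attained. I expect the main obstacle to be the general, possibly polynomially growing, $X\in L_{ip}(\Omega)$, for which $Q\mapsto E_Q[X]$ is no longer weakly continuous and a straightforward passage to the closure fails. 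To handle this I would invoke uniform integrability of the relevant moments across $\mathcal{P}$: the Kolmogorov--Chentsov moment estimates recorded just before Theorem~\ref{Gt34}, together with the uniform control of all $G$-normal moments of $\tilde{\mathbb{W}}_\gamma$, bound $\sup_{Q\in\mathcal{P}}E_Q[|X|^{1+\delta}]$ for some $\delta>0$. This uniform integrability allows $X$ to be approximated by its bounded truncations with an error controlled uniformly in $Q$, so the identity $\hat{\mathbb{E}}[X]=\sup_{Q\in\mathcal{P}}E_Q[X]$ persists for all $X\in L_{ip}(\Omega)$ and the supremum is attained on the compact set $\mathcal{P}$, giving the desired representation $\hat{\mathbb{E}}[X]=\max_{P\in\mathcal{P}}E_P[X]$.
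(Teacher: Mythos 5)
Your proposal is correct and follows essentially the same route as the paper: the paper likewise obtains $\hat{\mathbb{E}}[X]=\max_{P\in\mathcal{P}_{1}}E_{P}[X]$ on $L_{ip}(\Omega)$ from Lemma~\ref{le5} via the pushforward family $\mathcal{P}_{1}=\{P\circ\tilde{\mathbb{W}}^{-1}:P\in\mathcal{P}_{e}\}$ built from the continuous modification, and then passes to the weakly compact closure $\mathcal{P}=\overline{\mathcal{P}}_{1}$ by truncating $X$ and using weak convergence. The only immaterial difference is in how the truncation error is controlled uniformly over the family: the paper invokes Lemma~\ref{le3} to get $\hat{\mathbb{E}}[|X-(X\wedge N)\vee(-N)|]\downarrow0$, whereas you use a Chebyshev/uniform-integrability bound via higher moments of the $G$-normal coordinates; both accomplish the same step.
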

\[
\hat{\mathbb{E}}[X]=\max_{P\in\mathcal{P}}E_{P}[X]\quad\text{for}\ X\in L_{ip}(\Omega).
\]

\begin{proof}
By Lemma \ref{le5}, we have %and Theorem \ref{modification1},  
\[
\hat{\mathbb{E}}[X]=\max_{P\in\mathcal{P}_{1}}E_{P}[X]\quad\text{for}\ X\in L_{ip}(\Omega).
\]
For any $X\in L_{ip}(\Omega)$, by Lemma \ref{le3}, we get $\mathbb{\hat{E}}[|X-(X\wedge N)\vee(-N)|]\downarrow0$
as $N\rightarrow\infty$. Noting that $\mathcal{P}=\overline{\mathcal{P}}_{1}$,
by the definition of weak convergence, we thus have the result.
\end{proof}

\end{document}